\theoremstyle{plain}
\newtheorem{thm}{Theorem}[section]
\newtheorem{prop}[thm]{Proposition}
\newtheorem{lemma}[thm]{Lemma}
\newtheorem{cor}[thm]{Corollary}
\newtheorem{thm:intro}[thm]{thm:introecture}
\newtheorem*{thm*}{Theorem}
\theoremstyle{definition}
\newtheorem{dfn}[thm]{Definition}
\newtheorem{ex}[thm]{Example}
\newtheorem{rem}[thm]{Remark}
\numberwithin{equation}{section}
\DeclareMathAlphabet\mathsf{OT1}{cmss}{m}{n}
\newcommand{\fb}{{\mathfrak b}}
\newcommand{\fg}{{\mathfrak g}}
\newcommand{\fh}{{\mathfrak h}}
\newcommand{\fk}{{\mathfrak k}}
\newcommand{\fm}{{\mathfrak m}}
\newcommand{\fp}{{\mathfrak p}}
\newcommand{\fq}{{\mathfrak q}}
\newcommand{\fu}{{\mathfrak u}}
\newcommand{\fs}{{\mathfrak s}}
\newcommand{\R}{\mathbb R}           % Use for real numbers.
\def\SL{\operatorname{SL}}
\def\SO{\operatorname{SO}}
\newcommand{\rQ}{\operatorname{Q}} 
\newcommand{\rR}{\operatorname{R}}
\newcommand\varIota{\mathrm{I}}
\newcommand{\su}{\mathfrak{su}}
\DeclareMathOperator{\Sp}{Sp}
\renewcommand{\sp}{\mathfrak{sp}}
\newcommand{\so}{\mathfrak{so}}
\newcommand{\g}{\mathfrak{g}}
\newcommand{\la}{\lambda}
\newcommand{\gk}{\operatorname{GKdim}}
\newcommand{\AV}{\mathrm{AV}}
\newcommand\RR{\mathbb R}
\newcommand{\hobox}[3]{\draw (0.0+#1, -1.0-#2) rectangle (1.0+#1, 0.0-#2) node [midway, inner sep=0pt, scale=0.7] {$#3$};}
\renewcommand{\arraystretch}{.75}
\begin{document}

\title[Unitarity of highest weight Harish-Chandra modules]{Unitarity of highest weight Harish-Chandra modules %with trivial infinitesimal character
and smoothness of Schubert varieties}

\author{Zhanqiang Bai}
\address{Department of Mathematical Sciences, Soochow University, Suzhou 215006, China}
\email{\tt  zqbai@suda.edu.cn}

\author{William Q. Erickson}
\address{
Department of Mathematics, Monmouth College, Monmouth, Illinois, USA} 
\email{william.q.erickson@gmail.com}

\author{Markus Hunziker}
\address{
Department of Mathematics, Baylor University, Waco, Texas, USA} 
\email{Markus\_Hunziker@baylor.edu}

\author{Jing Jiang}
\address{
School of Mathematical Sciences, East China Normal University, Shanghai 200241, China} 
\email{jjsd6514@163.com}

\begin{abstract}
Let $G_{\mathbb{R}}$ be a Lie group of Hermitian type, and  $L(\lambda)$ a highest weight Harish-Chandra module of $G_{\mathbb{R}}$ with highest weight $\lambda$.  In this article, we exhibit a bijection between the set of connected Dynkin subdiagrams containing the noncompact simple root and the set of unitary highest weight modules $L(-w\rho-\rho)$, where $\rho$ is half the sum of positive roots. We find that $L(-w\rho-\rho)$ is unitary if and only if the Schubert variety $X(w)$ is smooth.  We also give the cardinality of the set of unitary highest weight modules $L(-w\rho-\rho)$ for each Kazhdan--Lusztig right cell.

\end{abstract}

\subjclass[2020]{22E47, 17B10}

\keywords{Unitary highest weight module, Schubert variety, associated variety, Kostant module, Kazhdan--Lusztig cell}

\maketitle

\tableofcontents
 \setcounter{secnumdepth}{4} 
 \setcounter{tocdepth}{4}

\section{Introduction}\label{intro}

Let \(G_{\mathbb{R}}\) be a connected noncompact simple Lie group with finite center, and let \(K_{\mathbb{R}}\) be a maximal compact subgroup of \(G_{\mathbb{R}}\). Denote by \(\mathfrak{g}_{\mathbb{R}} = \mathrm{Lie}(G_{\mathbb{R}})\) the Lie algebra of \(G_{\mathbb{R}}\). Harish-Chandra showed in \cite[\S1-2]{HC55} that infinite-dimensional highest weight Harish-Chandra modules for \(G_{\mathbb{R}}\) exist precisely when \(G_{\mathbb{R}}\) is of Hermitian type, that is, when \((G_{\mathbb{R}}, K_{\mathbb{R}})\) is a Hermitian symmetric pair. The unitarity of such modules has been extensively studied, with the complete classification  obtained independently by Enright--Howe--Wallach \cite{EHW} and Jakobsen \cite{Ja1}. A more elementary and uniform approach was later given by Enright--Joseph \cite{EJ}, avoiding much of the earlier case-by-case analysis. More recently,  another approach has been developed in \cite{PPST23,PPST25} that revisits the classification of unitary highest weight modules using slightly different methods from those of \cite{EHW} and \cite{Ja1};
meanwhile, a simple uniform characterization of unitarity for highest weight Harish-Chandra modules with prescribed associated variety was established in \cite[Thm. 1.1]{BH2025}, which was expressed directly in terms of the highest weight.
In this paper, we will present a new characterization of unitary highest weight Harish-Chandra modules with trivial infinitesimal character (see Theorem~\ref{thm:bij}).

Throughout, we assume \((G_{\mathbb{R}}, K_{\mathbb{R}})\) is a Hermitian symmetric pair. Let \(G\) and \(K\) denote the complexification of \(G_{\mathbb{R}}\) and \(K_{\mathbb{R}}\), respectively, and let \((\mathfrak{g}, \mathfrak{k})\) denote the complexified Lie algebras of \((G_{\mathbb{R}}, K_{\mathbb{R}})\). As a \(K\)-representation, \(\mathfrak{g}\) decomposes as
\[
\mathfrak{g} = \mathfrak{p}^{-} \oplus \mathfrak{k} \oplus \mathfrak{p}^{+}.
\]
Let \(\mathfrak{h} \subset \mathfrak{k}\) be a Cartan subalgebra, which is also a Cartan subalgebra of \(\mathfrak{g}\). Fix a Borel subalgebra \(\mathfrak{b} \supset \mathfrak{k}\), and let \(\Phi\) and \(\Phi(\mathfrak{k})\) be the root systems of \((\mathfrak{g}, \mathfrak{h})\) and \((\mathfrak{k}, \mathfrak{h})\), respectively. Choose a set of positive roots \(\Phi^{+}(\mathfrak{k})\), and define a positive system for \(\Phi\) by
\[
\Phi^{+} := \Phi^{+}(\mathfrak{k}) \cup \Phi(\mathfrak{p}^{+}).
\]
For a \(\Phi^{+}(\mathfrak{k})\)-dominant integral weight \(\lambda \in \mathfrak{h}^{*}\), let \(F(\lambda)\) be the irreducible \(\mathfrak{k}\)-module with highest weight \(\lambda\). Viewing \(F(\lambda)\) as a \(\mathfrak{q} = \mathfrak{k} \oplus \mathfrak{p}^{+}\)-module (with the nilradical acting trivially), define the induced module
\[
N(\lambda) := U(\mathfrak{g}) \otimes_{U(\mathfrak{q})} F(\lambda).
\]
Let \(L(\lambda)\) denote the irreducible quotient of \(N(\lambda)\).
Each such $L(\la)$ is said to be a \emph{highest weight Harish-Chandra module} for \(\mathfrak{g}\) (or for \(G_{\mathbb{R}}\)). If \(L(\lambda) \neq N(\lambda)\), then we say that \(\lambda\) is a \emph{reduction point}.

Let \(w_c\) and \(w_0\) be the longest elements of the Weyl groups \(W(\mathfrak{k})\) and \(W := W(\Phi)\), respectively. For \(w \in W\), define \(\tilde{w} := w_c w w_0\). Our main result is the following.

\begin{thm}\label{thm:bij}
Let \(G_{\mathbb{R}}\) be a Lie group of Hermitian type, with complexified Lie algebra \(\mathfrak{g}\). Let \(\Gamma\) be the Dynkin diagram of \(\mathfrak{g}\), with \(\alpha\) the noncompact simple root. Then there is a bijection
\[
\left\{
\begin{array}{l}
\textup{connected Dynkin subdiagrams of } \Gamma \\
\textup{containing the noncompact simple root } \alpha
\end{array}
\right\}
 \longleftrightarrow 
\left\{
\begin{array}{l}
\textup{unitary highest weight Harish-Chandra} \\
\textup{modules with infinitesimal character } \rho
\end{array}
\right\},
\]
where the empty subdiagram is included in the left-hand set. Explicitly, a subdiagram \(\Gamma' \subseteq \Gamma\) corresponds to the module \(L(w(\Gamma')\rho - \rho) = L(-w_c x \rho - \rho)\), where \(w(\Gamma') = \tilde{x}\) is defined as in Lemma~\ref{L:involution}. Conversely, if \(L(-w\rho - \rho)\) is unitary, then the corresponding subdiagram has simple roots
\[
\mathrm{supp}(x) := \{\alpha_i \in \Pi \mid s_i = s_{\alpha_i} \textup{ appears in a reduced expression of } x\},
\]
where \(x = w_c w\).
\end{thm}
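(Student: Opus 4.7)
The plan is to first determine which $w \in W$ make $L(-w\rho - \rho)$ a highest weight Harish-Chandra module. Since $\langle \rho, \beta^{\vee}\rangle = 1$ for each simple root $\beta$, the weight $-w\rho - \rho$ is $\Phi^{+}(\mathfrak{k})$-dominant integral if and only if $\Phi^{+}(\mathfrak{k}) \subseteq \Phi(w) := \{\gamma \in \Phi^{+} : w^{-1}\gamma < 0\}$; equivalently, $w$ admits a length-additive factorization $w = w_{c}\, x$ with $x \in W^{\mathfrak{k}}$, the set of minimal-length representatives of $W/W(\mathfrak{k})$. Thus the relevant modules are naturally indexed by $x \in W^{\mathfrak{k}}$, and $\supp(x) = \supp(w_{c}w) \subseteq \Pi$ is well-defined from any reduced expression.

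Next, I identify which $x \in W^{\mathfrak{k}}$ yield unitary modules by applying the unitarity criterion of \cite[Thm.~1.1]{BH2025} to the explicit highest weight $\lambda = -w_{c} x \rho - \rho$; this translates unitarity into combinatorial constraints on $x$. Since $\mathfrak{p}^{+}$ is an abelian $\mathfrak{k}$-module whose lowest weight is the noncompact simple root $\alpha$, any nontrivial $x \in W^{\mathfrak{k}}$ satisfies $s_{\alpha} \leq x$ in the Bruhat order, so $\alpha \in \supp(x)$ whenever $x \neq e$; this handles the containment requirement on $\alpha$ automatically.

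The principal obstacle is to show that $L(-w_{c} x \rho - \rho)$ is unitary precisely when $\supp(x)$ is either empty or a connected subdiagram of $\Gamma$. I plan a case-by-case verification through the Hermitian symmetric types, using the standard combinatorial models of $W^{\mathfrak{k}}$ (order ideals of a root poset: Young diagrams in a rectangle for type $\mathrm{A}$; shifted shapes for types $\mathrm{C},\mathrm{D}$; the two exceptional posets for $E_{6}, E_{7}$). In each case the unitary $L(-w_{c}x\rho-\rho)$ of infinitesimal character $\rho$ are explicit from the Enright--Howe--Wallach/Jakobsen classification (equivalently, from \cite[Thm.~1.1]{BH2025}), and these match precisely the order ideals whose supports are intervals of $\Pi$ through $\alpha$. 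A potentially more uniform alternative is to invoke the equivalence (also to be established in the paper) between unitarity of $L(-w\rho-\rho)$ and smoothness of the Schubert variety $X(w)$, reducing the main statement to the classical fact that the smooth Schubert varieties in the Hermitian-type flag varieties are enumerated by connected subdiagrams of $\Gamma$ through $\alpha$.

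Finally, to package the bijection: a connected $\Gamma' \ni \alpha$ (including $\Gamma' = \emptyset$) determines a unique $x \in W^{\mathfrak{k}}$ with $\supp(x) = \Gamma'$ by the combinatorial model, and I set $w(\Gamma') := \tilde{x}$ via Lemma~\ref{L:involution}; then $L(w(\Gamma')\rho - \rho) = L(-w_{c}x\rho - \rho)$ since $\tilde{x}\rho = w_{c}xw_{0}\rho = -w_{c}x\rho$. The inverse map reads off $\supp(w_{c}w)$ from a unitary $L(-w\rho-\rho)$, which by the previous step is guaranteed to be a connected subdiagram through $\alpha$ (or empty), completing the bijection.
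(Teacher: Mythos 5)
Your overall framework (index the highest weight Harish-Chandra modules in the block by $x\in{}^{\fk}W$, then decide unitarity of $L(-w_c x\rho-\rho)$ combinatorially) matches the paper's setup, but the combinatorial criterion you propose to verify is not the correct one, and this is a genuine error rather than a presentational gap. You claim that $L(-w_c x\rho-\rho)$ is unitary precisely when $\supp(x)$ is empty or a connected subdiagram of $\Gamma$. This is false already for $\su(3,2)$: the lower order ideal $\{e_3-e_4,\ e_2-e_4,\ e_3-e_5\}\subset\Phi(\mathfrak{p}^+)$ gives $x=s_3s_2s_4\in{}^{\fk}W$ with $\supp(x)=\{\alpha_2,\alpha_3,\alpha_4\}$ connected, yet $L_x$ is not a Kostant module (its generalized Young diagram is a hook, not a rectangle), hence not unitary by Theorem~\ref{kostant-module}. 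Similarly, in type $\mathsf{B}_n$ the hook-shaped ideals eliminated in Proposition~\ref{generalizeddiagram} have $\supp(x)=\Pi$, which is connected, but the corresponding modules are not unitary. The correct condition --- which the paper proves --- is that $\Phi_x$ must equal $\Phi'\cap\Phi(\mathfrak{p}^+)$ for the root subsystem $\Phi'$ generated by a connected subdiagram containing $\alpha$; this is strictly stronger than connectedness of $\supp(x)$. The theorem's ``converse'' clause only asserts that $\supp(x)$ recovers $\Gamma'$ once unitarity is already known; it does not say that connectedness of $\supp(x)$ characterizes unitarity.

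Two further problems. First, your ``more uniform alternative'' of reducing to smoothness of $X(w)$ is circular: in the paper the equivalence between unitarity of $L(-w\rho-\rho)$ and smoothness of $X(w)$ is Corollary~\ref{uni-bij}, deduced \emph{from} Theorem~\ref{thm:bij}, and the implication ``smooth $\Rightarrow$ unitary'' is precisely the new content, so it is not available as an input. Second, the part of your argument that actually decides unitarity is deferred to an unexecuted case-by-case check against the Enright--Howe--Wallach classification. The paper instead proves the positive direction uniformly for all types (Lemma~\ref{coxe} and Proposition~\ref{unitarity}: Panyushev's identity $\langle\Phi(\mathfrak{p}^+)\rangle=h^\vee\zeta$ gives $(\lambda_{\Phi'}+\rho,\beta^\vee)=h'^\vee-1$, and one verifies $\rQ(\lambda_{\Phi'})=\rR(\lambda_{\Phi'})=w_c\Phi'$ so that this value is exactly the last unitary reduction point of Theorem~\ref{last-unitary}), and handles the negative direction by combining Enright's theorem with the Boe--Hunziker classification of Kostant modules, so that only the $n-1$ extra Kostant modules in each of types $\mathsf{B}_n$ and $\mathsf{C}_n$ require explicit elimination. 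You would need to replace your criterion with the correct one and actually carry out the verification before this plan becomes a proof.
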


Let \(B \subset G\) be the Borel subgroup corresponding to \(\mathfrak{b}\), giving a triangular decomposition \(\mathfrak{g} = \mathfrak{n} \oplus \mathfrak{h} \oplus \mathfrak{n}^-\). Fix a parabolic subgroup \(Q\) with \(B \subset Q \subset G\) and \(\mathrm{Lie}(Q_{\mathrm{I}}) = \mathfrak{q} := \mathfrak{k} \oplus \mathfrak{p}^+\). The flag variety \(G/B\) decomposes into Schubert cells \(B w B/B\), indexed by \(w \in W\). Their closures \(X(w) := \overline{B w B/B}\) are Schubert varieties, which are fundamental objects in algebraic geometry, representation theory, and combinatorics. A Schubert variety \(X(w)\) is \emph{smooth} if it is smooth as an algebraic variety, and \emph{rationally smooth} if its Poincaré polynomial \(p_w(t) = \sum_{v \leq w} t^{\ell(v)}\) is palindromic, where \(\leq\) is the Bruhat order and \(\ell(v)\) the length of \(v\).

The study of  smoothness and rational smoothness of Schubert varieties has been a central theme since the foundational work of Lakshmibai and Sandhya \cite{LS-90}, who gave the following combinatorial criterion in type A: \(X(w) \subset \SL(n, \mathbb{C})/B\) is smooth if and only if the permutation \(w \in S_n\) avoids the patterns \(3412\) and \(4231\). Billey \cite{Bi-98} extended this to other classical types, and Billey--Postnikov \cite{BP-05} introduced a uniform approach via root subsystems, showing that (rational) smoothness of \(X(w) \subset G/B\) is equivalent to avoiding a finite list of patterns from stellar root subsystems.

Let \(W^{\mathfrak{k}}\) (resp., \({}^{\mathfrak{k}}W\)) denote the set of minimal length coset representatives of \(W/W(\mathfrak{k})\) (resp., \(W(\mathfrak{k}) \backslash W\)). Kostant \cite{Ko-63} generalized Schubert cell decompositions to \(G/Q\), with cells parametrized by \(W^{\mathfrak{k}}\). When \(G/Q\) is a cominuscule flag variety (i.e., a Hermitian symmetric space), \(W^{\mathfrak{k}}\) admits a description via generalized Young diagrams \cite{Pr-81,EHP}. For classical \(G\) and cominuscule \(G/Q\), Lakshmibai--Weyman \cite{LW-90} described the singular locus of Schubert varieties in terms of partitions. Robles \cite{Ro-14} gave a uniform description of the singular locus of a Schubert
variety $X_Q(w)\subset G/Q$. Hong \cite{Hong:07} established a bijection between connected subdiagrams containing the noncompact simple root and smooth Schubert varieties in \(G/Q\). 

By Brylinski--Kashiwara \cite[Prop.~6.4]{BK81}, the support of the localization of \(L(-w\rho - \rho)\) is the Schubert variety \(X(w) \subset G/B\). Combining this with a result of Richmond--Zainoulline \cite{RZ23}, we obtain the following corollary to Theorem~\ref{thm:bij}.

\begin{cor}\label{uni-bij}
With all notation as in Theorem~\ref{thm:bij}, let \(L(-w\rho - \rho)\) be a  highest weight Harish-Chandra module. Then \(L(-w\rho - \rho)\) is unitary if and only if the Schubert variety \(X(w) \subset G/B\) is smooth, which is equivalent to the smoothness of \(X_Q(x^{-1}) \subset G/Q\), where \(x = w_c w \in {}^{\mathfrak{k}}W\).
\end{cor}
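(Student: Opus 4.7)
The plan is to derive the corollary by composing three independently established equivalences: the classification in Theorem~\ref{thm:bij}, Hong's geometric bijection \cite{Hong:07}, and the Richmond--Zainoulline transfer of smoothness between Schubert varieties in $G/Q$ and $G/B$ \cite{RZ23}.

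First I would invoke Theorem~\ref{thm:bij} to translate unitarity of $L(-w\rho-\rho)$ into the combinatorial condition that $\supp(x) \subseteq \Gamma$ is either empty or a connected Dynkin subdiagram containing the noncompact simple root $\alpha$, where $x = w_c w$. Next, I would apply Hong's theorem, which establishes a bijection between precisely this class of subdiagrams and the smooth Schubert varieties $X_Q(x^{-1}) \subset G/Q$ in the cominuscule flag variety. Since both bijections single out the same collection of subdiagrams, and the element $x^{-1} \in W^{\mathfrak{k}}$ is recovered from $\supp(x)$, this identifies unitarity of $L(-w\rho-\rho)$ with smoothness of $X_Q(x^{-1})$. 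Finally, I would invoke \cite{RZ23}, using the length-additive factorization $w = w_c x$ with $x \in {}^{\mathfrak{k}}W$ under the cominuscule projection $G/B \to G/Q$, to transfer smoothness of $X_Q(x^{-1}) \subset G/Q$ to smoothness of $X(w) \subset G/B$. The Brylinski--Kashiwara identification of the support of the localization of $L(-w\rho-\rho)$ with $X(w)$ then supplies the geometric language in which the corollary is phrased.

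The hard part is not mathematical depth but careful bookkeeping across three different indexing conventions. In particular, one must verify that (i) the element $x = w_c w \in {}^{\mathfrak{k}}W$ from Theorem~\ref{thm:bij} is the same element whose inverse $x^{-1} \in W^{\mathfrak{k}}$ parametrizes the Schubert variety in Hong's framework; (ii) the subdiagram $\supp(x)$ attached to the unitary module $L(-w\rho-\rho)$ in Theorem~\ref{thm:bij} coincides with the subdiagram Hong attaches to $X_Q(x^{-1})$; and (iii) the smoothness equivalence between $X_Q(x^{-1}) \subset G/Q$ and $X(w) \subset G/B$ from \cite{RZ23} is compatible with the substitution $w = w_c x$. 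Once these conventional alignments are checked, the corollary follows as a formal chain of equivalences, with no further calculation required.
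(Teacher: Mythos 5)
Your proposal follows essentially the same route as the paper: Theorem~\ref{thm:bij} reduces unitarity of $L(-w\rho-\rho)$ to the connected-subdiagram condition on $\supp(x)$, Hong's bijection (Proposition~\ref{P:smooth}) converts that condition to smoothness of $X_Q(x^{-1})\subset G/Q$, and the Richmond--Zainoulline result transfers this to $G/B$. The one caution is that your item (iii) is not pure bookkeeping: as cited here (Proposition~\ref{bij-equi}), \cite{RZ23} relates $X_Q(x^{-1})$ to $X(x^{-1}w_c)$, so to reach $X(w_cx)=X(w)$ you also need Carrell's theorem that $X(u)$ is smooth if and only if $X(u^{-1})$ is smooth (Proposition~\ref{inverse}) --- exactly the extra input the paper supplies in Corollary~\ref{s-bij}.
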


From Barchini--Zierau \cite{BZ-17}, it seems that one direction of Corollary~\ref{uni-bij} was already known, namely, that the support of the localization of  a unitary highest weight Harish-Chandra module \(L(-w\rho - \rho)\) is smooth. 
The converse, however, seems to be new.
To determine the unitarity of \(L(-w\rho - \rho)\), we have implemented an algorithm in Python, available at
\begin{center}
\url{https://github.com/JingJiang-web/unitary-for-Hermitian-type}.
\end{center}
% \[
% \href{https://github.com/JingJiang-web/unitary-for-Hermitian-type}{\texttt{https://github.com/JingJiang-web/unitary-for-Hermitian-type}}.
% \]

When \(\Gamma'\) is a connected subdiagram of \(\Gamma\), the module \(L(w(\Gamma')\rho - \rho)\) is called a \emph{special Kostant module} in \cite{BH:09}; we refer to such modules in Theorem~\ref{thm:bij} as \emph{special Kostant modules of Hermitian type}. Kostant modules are those simple highest weight modules admitting a Bernstein--Gelfand--Gelfand (BGG) resolution \cite{BH:09}. Boe--Hunziker \cite{BH:09} gave a classification of Kostant modules in regular blocks for maximal parabolics in the simply laced types. For Hermitian symmetric pairs, they found a uniform characterization of  Kostant modules in regular blocks via subdiagrams of some Dynkin diagram containing the noncompact simple root. Kostant modules are significant, among other reasons, because every unitary highest weight module is a Kostant module; see Theorem~\ref{kostant-module} below, from Enright~\cite{Enright}. Thus one can use certain properties of Kostant modules to study unitary highest weight modules; see, for example, \cite{EH-04,EW,EH-dim}.

% Since every unitary highest weight module is a Kostant module \cite{Enright}, Theorem~\ref{thm:bij} implies a bijection between smooth Schubert varieties in \(G/Q\) and unitary highest weight Harish–Chandra modules with trivial infinitesimal character.

% In simply laced types, rational smoothness and smoothness coincide. Boe--Hunziker \cite{BH:09} showed that Kostant highest weight modules correspond bijectively to rationally smooth Schubert varieties in \(G/Q\). Hence, for simply laced types, there is a one-to-one correspondence between:
% \begin{itemize}
% \item connected subdiagrams of \(\Gamma\) containing \(\alpha\),
% \item special Kostant modules of Hermitian type (unitary highest weight Harish–Chandra modules), and
% \item smooth Schubert varieties in \(G/Q\).
% \end{itemize}
% In non-simply laced types, however, there exist Kostant highest weight modules that are not unitary, corresponding to Schubert varieties that are rationally smooth but not smooth.

% The paper is organized as follows. Section~\ref{pre} collects necessary preliminaries. Section~\ref{uniform-proof} gives a uniform proof of Theorem~\ref{thm:bij} for simply laced types. Section~\ref{case-proof} provides a case-by-case proof for non-simply laced types using Boe–Hunziker’s characterization of Kostant modules \cite{BH:09}. Finally, Section~\ref{number} uses the Gelfand–Kirillov dimension formula for unitary highest weight modules from Bai--Hunziker \cite{BH} to count such modules in each Kazhdan–Lusztig right cell. We follow the notation and coordinates of \cite{EHW} throughout.

The paper is organized as follows. In \S \ref{pre}, we  prepare some necessary preliminaries. In \S \ref{uniform-proof}, we give a uniform proof of Theorem \ref{thm:bij} for simply laced types. In \S \ref{case-proof}, we give a case-by-case proof of Theorem \ref{thm:bij} for non-simply laced types by using the characterization of Kostant modules given by Boe--Hunziker \cite{BH:09}. In \S \ref{number}, by using the formula of Gelfand--Kirillov dimensions of unitary highest weight modules in Bai--Hunziker \cite{BH}, we count such unitary highest
weight modules with the trivial infinitesimal character for each Kazhdan--Lusztig right cell. Throughout, we follow the notation in~\cite{EHW} in writing coordinates for root
systems and other relevant data.

\section{Preliminaries}\label{pre}

% \subsection{Parabolic subalgebras of Hermitian type}

Let $\fg$   be a complex simple Lie algebra of rank $n$. We fix a Cartan subalgebra $\fh$ of $\fg$, and denote 
by $\Phi\subset \fh^*$ the root system of $\fg$ with respect to $\fh$.
For every $\alpha\in \Phi$, we write $\fg_\alpha$ to denote the one-dimensional root space. 
We choose a simple system  $\Pi\subset \Phi$ and write $\Pi=\{\alpha_1,\ldots,\alpha_n\}$, where the simple 
roots $\alpha_i$ are labeled according to Bourbaki~\cite{Bour}. This simple system determines a positive system $\Phi^+\subset \Phi$.
The corresponding standard Borel subalgebra is defined as
$$
    \fb:=\fh\oplus  \bigoplus_{\alpha\in \Phi^+} \fg_\alpha.
$$
 Every Borel subalgebra of $\fg$ is conjugate to $\fb$ under 
the action of the adjoint group of $\fg$. 

\subsection{Parabolic subalgebras}
A \emph{parabolic subalgebra} $\fq$ of $\fg$ is a subalgebra containing some Borel subalgebra of $\fg$.
If $\fq$ contains the fixed Borel subalgebra $\fb$, we say that $\fq$ is a \emph{standard} 
parabolic subalgebra.  Any parabolic subalgebra $\fq$ of $\fg$ 
is conjugate to a standard parabolic subalgebra under 
the action of the adjoint group of $\fg$. 
There is a one-to-one  correspondence between subsets of
$\Pi$  and standard parabolic subalgebras of $\fg$ as follows.
For $\varIota\subset \Pi$, define a root system $\Phi_{\varIota}$ by 
$$
    \Phi_{\varIota} :=\Phi\cap\sum_{\alpha\in \varIota} \mathbb{Z}\alpha.
$$
Next define a reductive subalgebra $\fm_{\varIota}$  of $\fg$ and a nilpotent subalgebra  $\fu_{\varIota}$ 
of $\fg$ by 
$$
\fm_{\varIota} :=\fh\oplus  \bigoplus_{\alpha\in \Phi_{\varIota}} \fg_\alpha\quad \mbox{and} 
\quad \fu_{\varIota} :=\bigoplus_{\alpha\in \Phi^+\setminus \Phi_{\varIota}} \fg_\alpha .
$$
Then $\fq_{\varIota}:=\fm_{\varIota}\oplus\fu_{\varIota}$ is a standard parabolic subalgebra of $\fg$ 
with Levi subalgebra $\fm_{\varIota}$ and nilradical $\fu_{\varIota}$. 
If $|\Pi\setminus\varIota|=1$, then $\fq_{\varIota}$ is called a maximal parabolic subalgebra. 

To simplify notation, we often omit the subscript and write, for instance, $\fq=\fm\oplus\fu$ instead of 
$\fq_{\varIota}=\fm_{\varIota}\oplus\fu_{\varIota}$. The set 
$$
\Phi(\fu):=\Phi^+\setminus \Phi_\varIota
$$
is a partially ordered subset of $\Phi^+$.

Now suppose we have an irreducible Hermitian symmetric pair $(G_{\mathbb{R}}, K_{\mathbb{R}})$ of noncompact type.  Let $(\fg_\RR,\fk_\RR)$ be
the pair of Lie algebras of  $(G_\RR, K_\RR)$. Then $\fg_\RR$ is a noncompact real form of $\fg$ with Cartan decomposition $\fg_\RR=\fk_\RR\oplus \fp_\RR$,
and we may assume that $\fk_\RR$ has a nontrivial center $\mathfrak{z}_\RR$. We have the usual decomposition $\mathfrak{g}=\mathfrak{p}^{-}\oplus \mathfrak{k}\oplus \mathfrak{p}^{+}$ as a $K$-representation. We say that $\mathfrak{q} := \mathfrak{k}+\mathfrak{p}^+$ is a parabolic subalgebra of \emph{Hermitian type}.

If $\fq=\fm\oplus\fu=\mathfrak{k}\oplus\mathfrak{p}^+$ is of Hermitian type, then the roots in $\Phi_{\varIota}$ are called the \emph{compact roots}
and the roots in $\Phi\setminus \Phi_{\varIota}$ are called the \emph{noncompact roots}. Note that there
is  a unique noncompact simple root, namely the root $\alpha\in \Pi\setminus\varIota$.
Usually we denote the set of positive compact roots by $\Phi^+(\mathfrak{k}):=\Phi_{\varIota}^+$ and the set of positive noncompact roots by $\Phi(\mathfrak{p}^+):=\Phi(\fu)=\Phi^+\setminus \Phi_{\varIota}$. See Table \ref{hermitian} for the classification of Hermitian symmetric pairs.

\vspace{.5pc}
\begin{table}[h]

\centering
\begingroup
%{{\bf Table 1}\quad Hermitian symmetric pairs of noncompact type} \\
%\vspace{.5pc}
\renewcommand{\arraystretch}{1.5}
\setlength\tabcolsep{15pt}
\begin{tabular}{l|l|l}
 \hline
$\fg_{\mathbb{R}}$ & $\fk_\RR$& Dynkin diagram\\
\hline
% \noalign{\smallskip}
%  & & \\[-7pt]
$\su(p,q)$ & $\fs(\fu(p)\oplus \fu(q))$ & 
\begin{pspicture}(-.10,0.3)(2.6,.1)
\cnode*(0,0){.07}{d1}
\uput[d](0,0){\small $1$}
\cnode*(0.5,0){.07}{d3}
\uput[d](0.5,0){\small $2$}
\cnode*(1 ,0){.07}{d4}
\uput[d](1,0){\small $\cdots$}
\cnode*(1.5,0){.07}{d5}
\uput[d](1.5,-.065){\small $p$}
\cnode*(2,0){.07}{d6}
\uput[d](2,0){\small $\cdots$}
\cnode*(2.5,0){.07}{d7}
\uput[d](2.5,-.065){\small $n$}
\ncline{d1}{d3}
\ncline[linestyle=dotted,dotsep=1.3pt]{d3}{d4}
\ncline{d4}{d5}
\ncline{d5}{d6}
\ncline[linestyle=dotted,dotsep=1.3pt]{d6}{d7}
\pscircle[linewidth=.5pt,fillstyle=solid](0,0){.07}
\pscircle[linewidth=.5pt,fillstyle=solid](0.5,0){.07}
\pscircle[linewidth=.5pt,fillstyle=solid](1.0,0){.07}
\pscircle[linewidth=.5pt,fillstyle=solid](2,0){.07}
\pscircle[linewidth=.5pt,fillstyle=solid](2.5,0){.07}
\end{pspicture}
\\
 & & \\ \hline
$\so(2n-1,2)$ & $\so(2n-1)\oplus \so(2)$ & 
 \begin{pspicture}(-.10,0.3)(2.6,.1)
\cnode*(0,0){.07}{d1}
\uput[d](0,0){\small $1$}
\cnode*(0.5,0){.07}{d3}
\uput[d](0.5,0){\small $2$}
\cnode*(1 ,0){.07}{d4}
\cnode*(1.5,0){.07}{d5}
\uput[d](1.5,-.065){\small $\cdots$}
\cnode*(2,0){.07}{d6}
\cnode*(2.5,0){.07}{d7}
\uput[d](2.5,-.065){\small $n$}
\ncline{d1}{d3}
\ncline[linestyle=dotted,dotsep=1.3pt]{d3}{d4}
\ncline{d4}{d5}
\ncline{d5}{d6}
\psline[linewidth=.5pt](2,.06)(2.5,.06)
\psline[linewidth=.5pt](2,-.06)(2.5,-.06)
\uput[r](1.925,0){$>$}
\pscircle[linewidth=.5pt,fillstyle=solid](0.5,0){.07}
\pscircle[linewidth=.5pt,fillstyle=solid](1,0){.07}
\pscircle[linewidth=.5pt,fillstyle=solid](1.5,0){.07}
\pscircle[linewidth=.5pt,fillstyle=solid](2,0){.07}
\pscircle[linewidth=.5pt,fillstyle=solid](2.5,0){.07}
\end{pspicture}
 \\ 
  & & \\ \hline
$\sp(n, \RR)$ & $\fu(n)$ &
 \begin{pspicture}(-.10,0.3)(2.6,.1)
\cnode*(0,0){.07}{d1}
\uput[d](0,0){\small $1$}
\uput[d](0.5,0){\small $2$}
\uput[d](1.5,-.065){\small $\cdots$}
\uput[d](2.5,-.065){\small $n$}
\cnode*(0.5,0){.07}{d3}
\cnode*(1 ,0){.07}{d4}
\cnode*(1.5,0){.07}{d5}
\cnode*(2,0){.07}{d6}
\cnode*(2.5,0){.07}{d7}
\ncline{d1}{d3}
\ncline[linestyle=dotted,dotsep=1.3pt]{d3}{d4}
\ncline{d4}{d5}
\ncline{d5}{d6}
\psline[linewidth=.5pt](2,.06)(2.5,.06)
\psline[linewidth=.5pt](2,-.06)(2.5,-.06)
\uput[r](1.925,0){$<$}
\pscircle[linewidth=.5pt,fillstyle=solid](0,0){.07}
\pscircle[linewidth=.5pt,fillstyle=solid](0.5,0){.07}
\pscircle[linewidth=.5pt,fillstyle=solid](1,0){.07}
\pscircle[linewidth=.5pt,fillstyle=solid](1.5,0){.07}
\pscircle[linewidth=.5pt,fillstyle=solid](2,0){.07}
\end{pspicture} \\ 
  & & \\ \hline
$\so(2n-2,2)$ & $\so(2n-2)\oplus \so(2)$ & 
 \begin{pspicture}(-.10,0.8)(2.6,.1)
\cnode*(0,0){.07}{d1}
\uput[d](0,0){\small $1$}
\cnode*(0.5,0){.07}{d2}
\uput[d](0.5,0){\small $2$}
\uput[d](1.5,-.065){\small $\cdots$}
\cnode*(1 ,0){.07}{d3}
\cnode*(1.5,0){.07}{d4}
\cnode*(2,0){.07}{d5}
\cnode*(2.354,0.354){.07}{d6}
\uput[r](2.354,0.354){\small $n-1$}
\cnode*(2.354,-0.354){.07}{d7}
\uput[r](2.354,-0.354){\small $n$}
\ncline{d1}{d2}
\ncline[linestyle=dotted,dotsep=1.3pt]{d2}{d3}
\ncline{d3}{d4}
\ncline{d4}{d5}
\ncline{d5}{d6}
\ncline{d5}{d7}
\pscircle[linewidth=.5pt,fillstyle=solid](0.5,0){.07}
\pscircle[linewidth=.5pt,fillstyle=solid](1,0){.07}
\pscircle[linewidth=.5pt,fillstyle=solid](1.5,0){.07}
\pscircle[linewidth=.5pt,fillstyle=solid](2,0){.07}
\pscircle[linewidth=.5pt,fillstyle=solid](2.354,.354){.07}
\pscircle[linewidth=.5pt,fillstyle=solid](2.354,-.354){.07}
\end{pspicture}\\ 
  & & \\ \hline
$\so^*(2n)$ & $\fu(n)$& 
 \begin{pspicture}(-.10,0.8)(2.6,.1)
\cnode*(0,0){.07}{d1}
\uput[d](0,0){\small $1$}
\cnode*(0.5,0){.07}{d2}
\uput[d](0.5,0){\small $2$}
\cnode*(1 ,0){.07}{d3}
\uput[d](1.5,-.065){\small $\cdots$}
\cnode*(1.5,0){.07}{d4}
\cnode*(2,0){.07}{d5}
\cnode*(2.354,0.354){.07}{d6}
\uput[r](2.354,0.354){\small $n-1$}
\cnode*(2.354,-0.354){.07}{d7}
\uput[r](2.354,-0.354){\small $n$}
\ncline{d1}{d2}
\ncline[linestyle=dotted,dotsep=1.3pt]{d2}{d3}
\ncline{d3}{d4}
\ncline{d4}{d5}
\ncline{d5}{d6}
\ncline{d5}{d7}
\pscircle[linewidth=.5pt,fillstyle=solid](0,0){.07}
\pscircle[linewidth=.5pt,fillstyle=solid](0.5,0){.07}
\pscircle[linewidth=.5pt,fillstyle=solid](1,0){.07}
\pscircle[linewidth=.5pt,fillstyle=solid](1.5,0){.07}
\pscircle[linewidth=.5pt,fillstyle=solid](2,0){.07}
\pscircle[linewidth=.5pt,fillstyle=solid](2.354,.354){.07}
\end{pspicture}\\ 
  & & \\ \hline
%${\rm E_{III}}$ 
$\mathfrak{e}_{6(-14)}$
& $\so(10)\oplus \RR$ & 
\begin{pspicture}(-.10,0.8)(2.6,.1)
\cnode*(0,0){.07}{d1}
\uput[d](0,0){\small $1$}
\cnode*(1,.5){.07}{d2}
\uput[r](1,.5){\small $2$}
\cnode*(0.5,0){.07}{d3}
\uput[d](0.5,0){\small $3$}
\cnode*(1 ,0){.07}{d4}
\uput[d](1,0){\small $4$}
\cnode*(1.5,0){.07}{d5}
\uput[d](1.5,0){\small $5$}
\cnode*(2,0){.07}{d6}
\uput[d](2,0){\small $6$}
\ncline{d1}{d3}
\ncline{d3}{d4}
\ncline{d2}{d4}
\ncline{d4}{d5}
\ncline{d5}{d6}
\pscircle[linewidth=.5pt](0,0){.07}
\pscircle[linewidth=.5pt,fillstyle=solid](0.5,0){.07}
\pscircle[linewidth=.5pt,fillstyle=solid](1,0){.07}
\pscircle[linewidth=.5pt,fillstyle=solid](1,.5){.07}
\pscircle[linewidth=.5pt,fillstyle=solid](1.5,0){.07}
\pscircle[linewidth=.5pt,fillstyle=solid](2,0){.07}
\end{pspicture}\\ 
  & & \\ \hline
%${\rm E_{VII}}$ 
$\mathfrak{e}_{7(-25)}$
& $\mathfrak{e}_6\oplus \RR$ & 
\begin{pspicture}(-.10,0.8)(2.6,.1)
\cnode*(0,0){.07}{d1}
\uput[d](0,0){\small $1$}
\cnode*(1,.5){.07}{d2}
\uput[r](1,.5){\small $2$}
\cnode*(0.5,0){.07}{d3}
\uput[d](0.5,0){\small $3$}
\cnode*(1 ,0){.07}{d4}
\uput[d](1,0){\small $4$}
\cnode*(1.5,0){.07}{d5}
\uput[d](1.5,0){\small $5$}
\cnode*(2,0){.07}{d6}
\uput[d](2,0){\small $6$}
\cnode*(2.5,0){.07}{d7}
\uput[d](2.5,0){\small $7$}
\ncline{d1}{d3}
\ncline{d3}{d4}
\ncline{d2}{d4}
\ncline{d4}{d5}
\ncline{d5}{d6}
\ncline{d6}{d7}
\pscircle[linewidth=.5pt,fillstyle=solid](0,0){.07}
\pscircle[linewidth=.5pt,fillstyle=solid](0.5,0){.07}
\pscircle[linewidth=.5pt,fillstyle=solid](1,0){.07}
\pscircle[linewidth=.5pt,fillstyle=solid](1,.5){.07}
\pscircle[linewidth=.5pt,fillstyle=solid](1.5,0){.07}
\pscircle[linewidth=.5pt,fillstyle=solid](2,0){.07}
\end{pspicture}\\
 & & \\ \hline
\end{tabular}
\caption{Hermitian symmetric pairs of noncompact type}\label{hermitian}
\endgroup
%\vspace{1pc}
\end{table}

  \subsection{Minimal length coset representatives}

In the following, let $(\ ,\ )$ denote the  nondegenerate bilinear form on $\fh^*$ induced from the Killing form of $\fg$.
For $\alpha\in \Phi$, set $\alpha^\vee:=2\alpha/(\alpha,\alpha)$ and define
the reflection $s_\alpha: \fh^*\rightarrow \fh^*$ by 
$s_{\alpha}(\lambda) := \lambda-(\lambda,\alpha^\vee)\alpha$. The  Weyl group $W$
 is generated by the simple reflections $s_\alpha$, for all $\alpha\in \Pi$.
The \emph{length} of an element $w\in W$, denoted by $\ell(w)$, is the length of the shortest word representing $w$ 
as a product of the simple reflections (called a \emph{reduced expression} of $w$).
By \cite[\S 10.3]{hum72}, $\ell(w)$ is also equal to the number of positive roots $\alpha$ for which $w(\alpha)\in \Phi^-$.
Therefore, upon defining
\[
    \Phi_w:=\Phi^+\cap w\Phi^-,%=\{\alpha \in \varPhi^+\mid w^{-1}\alpha  \in \varPhi^-\}.
\]
%\end{defn}
we have $|\Phi_w|=\ell(w)$. For any subset $\Psi\subset \Phi^+$, we define $\langle \Psi \rangle:=\sum_{\alpha \in \Psi} \alpha$.

\begin{lemma}[{\cite[(5.10.1)]{Kostant:61}}]\label{L:rho}
For every $w\in W$,
$$
    w\rho = \rho-\langle \Phi_w\rangle,
$$
where $\rho:=\frac{1}{2}\langle \Phi^+\rangle$ as usual.
\end{lemma}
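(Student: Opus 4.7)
The plan is to prove this classical Kostant identity by a direct manipulation of the formula $2\rho = \langle \Phi^+\rangle$. Starting from
$$2w\rho \;=\; \sum_{\alpha \in \Phi^+} w\alpha \;=\; \sum_{\gamma \in w\Phi^+} \gamma,$$
I would partition $w\Phi^+$ according to sign, $w\Phi^+ = (w\Phi^+ \cap \Phi^+) \sqcup (w\Phi^+ \cap \Phi^-)$. The complementary decomposition $\Phi^+ = (\Phi^+ \cap w\Phi^+) \sqcup (\Phi^+ \cap w\Phi^-)$ immediately identifies the first piece with $\Phi^+ \setminus \Phi_w$. For the negative piece, the involution $\gamma \mapsto -\gamma$ restricts to a bijection between $w\Phi^+ \cap \Phi^-$ and $w\Phi^- \cap \Phi^+ = \Phi_w$, so this piece sums to $-\langle \Phi_w \rangle$. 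Combining these two contributions gives
$$2w\rho \;=\; \langle \Phi^+ \setminus \Phi_w\rangle - \langle \Phi_w\rangle \;=\; \langle \Phi^+\rangle - 2\langle \Phi_w\rangle \;=\; 2\rho - 2\langle \Phi_w\rangle,$$
and dividing by $2$ yields the claim.

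An alternative I would have in mind is induction on $\ell(w)$. The base case $w = e$ is immediate from $\Phi_e = \emptyset$. For the inductive step, one writes $w = s_\alpha v$ with $\alpha \in \Pi$ and $\ell(w) = \ell(v)+1$, then invokes the identity $s_\alpha \rho = \rho - \alpha$ (which follows from $(\rho,\alpha^\vee) = 1$ for simple $\alpha$) together with the bijection $\Phi_w = \{\alpha\} \sqcup s_\alpha(\Phi_v)$, encoding how a left multiplication by a simple reflection modifies the inversion set. The only real obstacle in either approach is the set-theoretic bookkeeping of $\Phi_w$, but this reduces to the standard fact that for $\alpha \in \Pi$, the reflection $s_\alpha$ permutes $\Phi^+ \setminus \{\alpha\}$; no deeper input is required, so I would favor the direct argument above for brevity.
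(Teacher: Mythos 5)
Your direct argument is correct: the decomposition $w\Phi^+=(w\Phi^+\cap\Phi^+)\sqcup(w\Phi^+\cap\Phi^-)$, the identification $w\Phi^+\cap\Phi^+=\Phi^+\setminus\Phi_w$, and the sign-flip bijection $w\Phi^+\cap\Phi^-\to\Phi_w$ together give $2w\rho=\langle\Phi^+\rangle-2\langle\Phi_w\rangle$ exactly as you say, and the inductive alternative (using $\Phi_{s_\alpha v}=\{\alpha\}\sqcup s_\alpha\Phi_v$ when $\ell(s_\alpha v)=\ell(v)+1$) is also sound. The paper supplies no proof of this lemma, citing only Kostant, and your first argument is precisely the classical one found there, so there is nothing to reconcile.
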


Let $\fq=\fq_{\varIota}=\mathfrak{k}\oplus\mathfrak{p}^+$ be a  parabolic subalgebra of Hermitian type, and let $ W(\mathfrak{k}):=W_{\varIota}=\langle s_\alpha \mid \alpha \in \varIota\rangle$ be  the Weyl group of the Levi subalgebra  $\fk$.

\begin{dfn}\label{D:IW}
Following Kostant~\cite[(5.13.1)]{Kostant:61}, define
$$
{}^\fk W:=\{w\in W \mid \Phi_w \subset \Phi(\mathfrak{p}^+)\}.
$$
For later reference, we also define $W^\fk:=\{w^{-1} \mid w\in {}^\fk W\}$.
\end{dfn}

% When $\fq=\fm\oplus\fu=\mathfrak{k}\oplus\mathfrak{p}^+$ is of Hermitian type, we usually denote ${}^\mathfrak{k} W:={}^\varIota W$ and $W(\mathfrak{k}):=W_{\varIota}$.

\begin{prop}[{\cite[Prop.~5.13]{Kostant:61}}]\label{decomp}
Every element $w\in W$ can be  uniquely written in the form 
$ w=uv$, where $u\in W(\mathfrak{k})$ and $v\in {}^\fk W$. Furthermore, if $w=uv$
is such a decomposition, then  $\ell(w)=\ell(u)+\ell(v)$.
Similarly,  $w$ can be  uniquely written in the form 
$ w=vu$, where $u\in W(\mathfrak{k})$ and $v\in {W}^{\fk}$.
\end{prop}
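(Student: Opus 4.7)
The plan is to prove the existence and uniqueness of the decomposition $w = uv$ by choosing $v$ to be the minimal length element in the right coset $W(\fk)w$. The key technical step will be a length-additivity lemma that handles both uniqueness and the equality $\ell(w) = \ell(u) + \ell(v)$ simultaneously. The left-coset version $w = vu$ will then be obtained by inverting.

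Before tackling the decomposition itself, I would first record the useful reformulation that $v \in {}^\fk W$ if and only if $v^{-1}\alpha \in \Phi^+$ for every simple compact root $\alpha \in \varIota$. The definition $\Phi_v \subset \Phi(\mathfrak{p}^+)$ unpacks to $v^{-1}\alpha > 0$ for all $\alpha \in \Phi^+(\fk)$; restricting to $\varIota$ is one direction, and the converse follows by writing any $\alpha \in \Phi^+(\fk)$ as a nonnegative integer combination of elements of $\varIota$, so that $v^{-1}\alpha$ is a nonnegative integer sum of positive roots and, being itself a root, lies in $\Phi^+$.

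The crux is then the length-additivity lemma: if $v \in {}^\fk W$ and $u \in W(\fk)$, then $\ell(uv) = \ell(u) + \ell(v)$. I would prove this by induction on $\ell(u)$. Writing $u = s_i u'$ with $s_i$ a simple reflection in $W(\fk)$ and $\ell(u) = \ell(u')+1$, the inductive hypothesis gives $\ell(u'v) = \ell(u') + \ell(v)$. The reduction of $s_i u'$ forces $(u')^{-1}\alpha_i \in \Phi^+$, and since $u' \in W(\fk)$ preserves the compact root subsystem, in fact $(u')^{-1}\alpha_i \in \Phi^+(\fk)$. Applying the reformulation above to $v$ yields $v^{-1}(u')^{-1}\alpha_i \in \Phi^+$, i.e., $(u'v)^{-1}\alpha_i > 0$, whence $\ell(s_i u'v) = \ell(u'v) + 1$ by the standard simple-reflection length criterion, completing the induction.

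With the lemma in hand, existence follows by taking $v$ of minimal length in $W(\fk)w$: minimality forces $\ell(s_i v) \geq \ell(v)$ for every $\alpha_i \in \varIota$, hence $v^{-1}\alpha_i \in \Phi^+$, and thus $v \in {}^\fk W$ by the reformulation. For uniqueness, if $u_1v_1 = u_2v_2$ are two such decompositions, set $u := u_2^{-1}u_1 \in W(\fk)$; then $uv_1 = v_2$ and $u^{-1}v_2 = v_1$, so applying the length lemma twice gives
\[
\ell(v_2) = \ell(u) + \ell(v_1) \quad\text{and}\quad \ell(v_1) = \ell(u) + \ell(v_2),
\]
forcing $\ell(u) = 0$ and hence $u_1 = u_2$, $v_1 = v_2$. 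The second decomposition $w = vu$ with $v \in W^\fk$ follows by applying the right-coset result to $w^{-1}$ and inverting, using $W^\fk = \{v^{-1} : v \in {}^\fk W\}$. The main obstacle is the inductive bookkeeping in the length-additivity lemma, namely ensuring that $(u')^{-1}\alpha_i$ lands in $\Phi^+(\fk)$ rather than merely $\Phi^+$ — this is precisely what licenses invoking the hypothesis $v \in {}^\fk W$ at the decisive moment.
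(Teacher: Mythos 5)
Your proof is correct. The paper itself offers no argument for this proposition --- it is quoted verbatim from Kostant \cite[Prop.~5.13]{Kostant:61} --- so there is nothing internal to compare against; your write-up is the standard self-contained Coxeter-group argument (minimal-length coset representative for existence, induction on $\ell(u)$ for length additivity, and the two-sided length inequality for uniqueness). The one place that deserves the care you gave it is indeed the inductive step: $(u')^{-1}\alpha_i$ is a positive \emph{compact} root but generally not simple, so you must invoke the full equivalence $v\in{}^\fk W \iff v^{-1}\Phi^+(\fk)\subset\Phi^+$ rather than the restriction to $\varIota$, and you correctly established that equivalence beforehand. The reduction of the left-coset statement to the right-coset one via $w\mapsto w^{-1}$ and $\ell(x)=\ell(x^{-1})$ is also fine.
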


\begin{cor}[{\cite[Cor.~3.5 and Rem.~3.6]{EHP}}]
The set ${}^\fk W$ is the set of minimal length coset representatives of $W(\fk)\backslash W$
(i.e., the set of right cosets of $W(\fk)$). For $w\in W$, the following are equivalent:
\begin{enumerate}
\item[\rm{(i)}] $\Phi_w\subset \Phi(\mathfrak{p}^+)$.
\item[\rm{(ii)}] $w^{-1}\Phi^+(\fk) \subset \Phi^+$.
\item[\rm{(iii)}] \normalfont $w\rho$  is   $\Phi^+(\fk)$-dominant integral.
\item[\rm{(iv)}] $\ell(s_\alpha w)=\ell(w)+1$ for all $\alpha\in \varIota$.
\end{enumerate}
Similarly, the set ${W}^{\fk} $ is the set of minimal length coset representatives of $W/W(\mathfrak{k})$.
\end{cor}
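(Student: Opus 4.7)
The plan is to prove the four equivalences and then deduce the minimal-length coset representative statement using Proposition~\ref{decomp}. The heart of the matter is translating the three different conditions (a combinatorial one on inversions, a geometric one on dominance of $w\rho$, and a length-theoretic one on simple compact reflections) into the single clean condition that $w^{-1}$ carries positive compact roots to positive roots.

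First I would show (i) $\Leftrightarrow$ (ii). By definition, $\Phi_w=\Phi^+\cap w\Phi^-$, so $\alpha\in\Phi_w$ iff $w^{-1}\alpha\in\Phi^-$. Thus $\Phi_w\subset\Phi(\mathfrak{p}^+)$ is equivalent to saying no positive compact root is sent to $\Phi^-$ by $w^{-1}$, i.e., $w^{-1}\Phi^+(\fk)\subset\Phi^+$. Next, (ii) $\Leftrightarrow$ (iii) follows from Lemma~\ref{L:rho} combined with the standard identity $(\rho,\gamma^\vee)>0$ iff $\gamma\in\Phi^+$. Indeed, for any $\alpha\in\Phi^+(\fk)$ one has $(w\rho,\alpha^\vee)=(\rho,w^{-1}\alpha^\vee)$, so $w\rho$ is $\Phi^+(\fk)$-dominant exactly when each $w^{-1}\alpha$ is positive (integrality is automatic since $\rho$ is integral on coroots).

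For (ii) $\Leftrightarrow$ (iv), the classical length criterion gives $\ell(s_\alpha w)=\ell(w)+1$ iff $w^{-1}\alpha\in\Phi^+$, so (iv) says exactly $w^{-1}\alpha\in\Phi^+$ for every $\alpha\in\varIota$. The nontrivial direction is (iv) $\Rightarrow$ (ii): if $\beta\in\Phi^+(\fk)$, then $\beta=\sum_{\alpha_i\in\varIota}c_i\alpha_i$ with $c_i\geq 0$, so
\[
w^{-1}\beta=\sum_{\alpha_i\in\varIota}c_i w^{-1}\alpha_i,
\]
which has nonnegative coefficients when expanded in $\Pi$ (because each $w^{-1}\alpha_i$ is a positive root). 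Since $w^{-1}\beta$ is itself a root with nonnegative $\Pi$-coefficients, it lies in $\Phi^+$. This is the step I expect to be the only genuinely non-tautological point.

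Finally, to identify ${}^\fk W$ with minimal length right coset representatives of $W(\fk)\backslash W$, I would use Proposition~\ref{decomp}: given $w\in W$, write $w=uv$ with $u\in W(\fk)$, $v\in {}^\fk W$, and $\ell(w)=\ell(u)+\ell(v)$. Then for any other element $u'v$ in the coset $W(\fk)v$, applying the proposition again to $u'v$ yields $\ell(u'v)=\ell(u')+\ell(v)\geq\ell(v)$, with equality only when $u'=e$; thus $v$ is the unique minimum-length element in its coset, and every coset contains exactly one element of ${}^\fk W$. The statement about $W^\fk=\{w^{-1}\mid w\in {}^\fk W\}$ being minimal length representatives of $W/W(\fk)$ then follows immediately from the length-preserving bijection $w\mapsto w^{-1}$ exchanging left and right cosets.
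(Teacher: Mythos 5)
Your proof is correct: the reductions of (i)--(iv) to the single condition $w^{-1}\alpha\in\Phi^+$ for $\alpha\in\Phi^+(\fk)$ (respectively $\alpha\in\varIota$) are all sound, including the one non-tautological step (iv) $\Rightarrow$ (ii) via nonnegativity of coefficients, and the coset-representative claim follows correctly from Proposition~\ref{decomp}. The paper itself gives no proof, citing \cite[Cor.~3.5 and Rem.~3.6]{EHP}, and your argument is the standard one found there, so there is nothing further to compare.
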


    Let $(\mathcal{P},\leq)$  be a partially ordered set. We say that a subset $\mathcal{I}\subset \mathcal{P}$ is a \emph{lower order ideal} of $\mathcal{P}$  if
$x\in \mathcal{I }$ and $x\geq y\in \mathcal{P}$ implies $y\in \mathcal{I}$. An upper order ideal of  $\mathcal{P}$ is defined analogously.
In the following, we will view the sets $\Phi^+$ and $\Phi(\fu)$ as partially ordered sets with  the usual ordering induced from the ordering on $\fh^*$, 
i.e., $\mu\leq \lambda$ if and only if $\lambda-\mu$ is a (possibly empty) sum of positive roots.
 
 For $v,w\in W$, write $v\rightarrow w$ if $\ell(w)>\ell(v)$ and $w=vs_\alpha$ for some $\alpha\in \Phi^+$.
Then define $v<w$ if there exists a sequence $v=w_1\rightarrow w_2\rightarrow\cdots \rightarrow w_m=w$.
The resulting partial order ``$\leq$'' on $W$ is called the {\it Bruhat order\/}. For a detailed treatment of the Bruhat order, see \cite[Chap. 2]{BB05}.

\begin{lemma}[{\cite[Lemma~3.7, Prop.~3.8 and Cor.~3.12]{EHP}}]\label{L:ideal}
Suppose $\fq$ is a parabolic subalgebra of Hermitian type. Let  $v, w\in {}^\mathfrak{k} W$. Then we have a  bijection
\begin{align*}
    {}^\mathfrak{k} W
    &\rightarrow \{\textup{lower order ideals  in $\Phi(\mathfrak{p}^+)$}\},\\
    w &\mapsto \Phi_w.
\end{align*}
Furthermore, $\Phi(\mathfrak{p}^+) \setminus \Phi_w$
 is an upper order ideal of $\Phi^+$, and we have $v\leq w$ if and only if $\Phi_v\subseteq \Phi_w$.

\end{lemma}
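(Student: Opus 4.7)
The plan is to exploit the defining feature of a Hermitian parabolic $\fq = \fk \oplus \fp^+$: the nilradical $\fp^+$ is abelian, so every root in $\Phi(\fp^+)$ carries coefficient exactly one on the noncompact simple root. Consequently, for any $\alpha,\beta \in \Phi(\fp^+)$ with $\beta \leq \alpha$, the difference $\alpha - \beta$ is a nonnegative sum of positive compact roots. Combined with characterization (ii) of the preceding corollary, namely $w^{-1}\Phi^+(\fk) \subset \Phi^+$ for $w\in {}^\fk W$, this observation drives both ideal claims. For the lower-ideal claim, given $\alpha \in \Phi_w$ and $\beta \in \Phi(\fp^+)$ with $\beta \leq \alpha$, the element $w^{-1}(\alpha - \beta)$ is a nonzero sum of positive roots, so $w^{-1}\beta = w^{-1}\alpha - w^{-1}(\alpha - \beta) \leq w^{-1}\alpha < 0$ in the usual partial order on $\fh^*$; since $w^{-1}\beta$ is a single root, it must itself be negative, placing $\beta \in \Phi_w$. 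The upper-ideal claim is parallel: any $\beta \in \Phi^+$ with $\beta \geq \alpha \in \Phi(\fp^+)\setminus \Phi_w$ automatically lies in $\Phi(\fp^+)$ by the coefficient count, and $w^{-1}\beta = w^{-1}\alpha + w^{-1}(\beta - \alpha)$ is a sum of positive roots, hence positive.

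For the bijection, injectivity is immediate from Lemma~\ref{L:rho}: $w\rho = \rho - \langle \Phi_w \rangle$, and regularity of $\rho$ recovers $w$ from $\Phi_w$. For surjectivity, I would induct on $|\mathcal{I}|$, with base case $\mathcal{I} = \emptyset$ yielding $w = e$. For the inductive step, pick a maximal $\beta \in \mathcal{I}$, apply the hypothesis to $\mathcal{I}\setminus \{\beta\}$ to obtain $w' \in {}^\fk W$ with $\Phi_{w'} = \mathcal{I}\setminus\{\beta\}$, and verify that $\alpha_j := w'^{-1}\beta$ is a simple root. Setting $w := w' s_{\alpha_j}$, one uses $w'\alpha_j = \beta \in \Phi^+$ to conclude $\ell(w) = \ell(w') + 1$, and the fact that $w'^{-1}\gamma \neq \alpha_j$ for any compact $\gamma \in \Phi^+(\fk)$ (since $\gamma$ is compact while $\beta = w'\alpha_j$ is not) to check $w \in {}^\fk W$. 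The standard identity $\Phi_{w's_{\alpha_j}} = \Phi_{w'} \cup \{w'\alpha_j\}$ then yields $\Phi_w = \mathcal{I}$. The Bruhat comparison $v \leq w \iff \Phi_v \subseteq \Phi_w$ on ${}^\fk W$ then follows from this inductive construction together with the standard subword property of the Bruhat order on $W$ (cf.\ \cite{BB05}).

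The main obstacle, I expect, is the claim within the inductive step that $w'^{-1}\beta$ is a simple root. This is precisely where the cominuscule structure of $\Phi(\fp^+)$ is indispensable: any nontrivial decomposition $w'^{-1}\beta = \gamma_1 + \gamma_2$ into positive roots would, after applying $w'$, express $\beta$ in a way that---via the maximality of $\beta$ in $\mathcal{I}$ and the lower-ideal hypothesis---conflicts with the structure of $\mathcal{I}$ as a lower ideal in the minuscule poset $\Phi(\fp^+)$. A careful poset-theoretic analysis of the covering relations in $\Phi(\fp^+)$, leveraging the cominuscule property, should complete this step.
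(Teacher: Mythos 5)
The paper itself offers no proof of this lemma---it is quoted verbatim from Enright--Hunziker--Pruett---so your attempt can only be judged on its own merits. Most of it holds up. The lower/upper order ideal claims are correct and complete: the cominuscule coefficient count plus criterion (ii) ($w^{-1}\Phi^+(\fk)\subset\Phi^+$) does exactly what you say. Injectivity via $w\rho=\rho-\langle\Phi_w\rangle$ and regularity of $\rho$ is correct. The step you flag as the ``main obstacle'' in the surjectivity induction---that $w'^{-1}\beta$ is simple---is indeed completable along the lines you indicate: if $w'^{-1}\beta=\gamma_1+\gamma_2$ with $\gamma_1,\gamma_2\in\Phi^+$, then writing $\beta=w'\gamma_1+w'\gamma_2$, either both summands are positive (in which case exactly one of them, say $\delta=w'\gamma_1$, lies in $\Phi(\fp^+)$ by the coefficient count, so $\delta<\beta$ forces $\delta\in\mathcal{I}\setminus\{\beta\}=\Phi_{w'}$, contradicting $w'^{-1}\delta=\gamma_1>0$), or one is negative (in which case its negative lies in $\Phi_{w'}\subset\Phi(\fp^+)$ and the other summand would need coefficient $2$ on the noncompact simple root, impossible). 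So that part is a sketch, but a sketch of a correct argument.

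The genuine gap is the final claim, specifically the direction $v\le w\Rightarrow\Phi_v\subseteq\Phi_w$. You dispose of both directions with ``the inductive construction together with the standard subword property,'' but the subword property only gives the \emph{converse}: since $\Phi_v$ is a lower order ideal contained in $\Phi_w$, a linear extension of $\Phi_w$ can be chosen with $\Phi_v$ as an initial segment, so the canonical reduced word of $v$ is a prefix of that of $w$ and $v\le w$. The implication $v\le w\Rightarrow\Phi_v\subseteq\Phi_w$ is \emph{false} for general elements of $W$ (in type $\mathsf{A}_2$, $s_1\le s_2s_1$ but $\Phi_{s_1}=\{\alpha_1\}\not\subseteq\Phi_{s_2s_1}=\{\alpha_2,\alpha_1+\alpha_2\}$), so it cannot follow from generic Coxeter combinatorics; it must use membership in ${}^\fk W$ and the cominuscule structure in an essential way. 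A correct route is an induction on $\ell(w)$ using the lifting property with the descent $s_j$, $j=f(\beta)$, for $\beta$ maximal in $\Phi_w$: the case $s_j\notin D_R(v)$ reduces to $v\le w'$, while the case $s_j\in D_R(v)$ gives $vs_j\in{}^\fk W$ (since $\Phi_{vs_j}\subset\Phi_v\subset\Phi(\fp^+)$) with $vs_j\le w'$, and one must then still argue that the removed root $-v\alpha_j$ lies in $\Phi_w$---a step that again needs the ideal structure and is not automatic. As written, this half of the lemma is unproved.
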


\begin{lemma}[{\cite[Lemma 4.1]{Ja1}}]
\label{lemma:Ja}
Let $\alpha \in \Phi(\mathfrak{p^{+}})$, let $\pi_{1},\ldots,\pi_{k}$
be distinct elements of $\Pi\cap \Phi(\mathfrak{k})$, and assume that $\alpha+\pi_{i}\in \Phi(\mathfrak{p^{+}})$ for $i=1,\ldots, k$. Then $k\leq 2$. Furthermore, if $k=2$, then 
$\pi_{1}\perp\pi_{2}$ and  $\alpha+\pi_{1}+\pi_{2}\in \Phi(\mathfrak{p^{+}})$. \hfill$\Box$
\end{lemma}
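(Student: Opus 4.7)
The plan is to exploit the cominuscule character of the noncompact simple root $\alpha_0$ in Hermitian type (equivalently, the abelianness of $\mathfrak{p}^+$). This forces the $\alpha_0$-coefficient of every root of $\mathfrak{g}$ to lie in $\{0,1\}$, so that for any $\alpha\in\Phi(\mathfrak{p}^+)$ and any compact simple root $\pi$, the sum $\alpha+\pi$, whenever it is a root at all, lies automatically in $\Phi(\mathfrak{p}^+)$ (the $\alpha_0$-coefficient is preserved, and positivity is immediate). This observation is used silently throughout.

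Assuming we are in a simply laced type, I would first establish the orthogonality assertion in the $k=2$ case by contradiction. Suppose $\pi_1$ and $\pi_2$ were adjacent in the Dynkin diagram of $\mathfrak{k}$, so $\langle\pi_1,\pi_2^\vee\rangle=-1$. In simply laced types all root strings have length at most $2$; hence from $\alpha+\pi_i\in\Phi$ one deduces $\alpha-\pi_i\notin\Phi$ and therefore $\langle\alpha,\pi_i^\vee\rangle=-1$ for $i=1,2$. It would then follow that
\[
\langle\alpha+\pi_1,\pi_2^\vee\rangle=\langle\alpha,\pi_2^\vee\rangle+\langle\pi_1,\pi_2^\vee\rangle=-2,
\]
forcing the $\pi_2$-string through $\alpha+\pi_1$ to have length at least $3$, in contradiction with the simply laced bound. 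Once orthogonality is known, applying $s_{\pi_2}$ to $\alpha+\pi_1$ and using $\pi_1\perp\pi_2$ yields $\langle\alpha+\pi_1,\pi_2^\vee\rangle=\langle\alpha,\pi_2^\vee\rangle=-1$, so $s_{\pi_2}(\alpha+\pi_1)=\alpha+\pi_1+\pi_2$ is a root, and by the cominuscule observation above it lies in $\Phi(\mathfrak{p}^+)$.

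For the bound $k\leq 2$, once pairwise orthogonality of the $\pi_i$'s is in hand, the element $s_{\pi_1}\cdots s_{\pi_k}$ carries $\alpha$ to $\alpha+\pi_1+\cdots+\pi_k$, which must remain a weight of the irreducible $\mathfrak{k}$-module $\mathfrak{p}^+$. Reading off the explicit models of $\mathfrak{p}^+$ from Table~\ref{hermitian} (the standard, exterior square, symmetric square, half-spin, or $\mathbf{27}$ of $E_6$), one checks directly that from any weight at most two compact simple roots can be added while staying in the weight diagram, so $k\leq 2$. The main obstacle is the non-simply laced Hermitian types $B_n$ and $C_n$: here root strings can have length $3$ (for example, short-root strings through long roots in $C_n$), so the simply laced proof of orthogonality breaks down, and one must resort to a case-by-case verification in $e$-coordinates using the explicit description of $\Phi(\mathfrak{p}^+)$ from Table~\ref{hermitian}, paralleling the case split that the paper itself invokes in \S\ref{case-proof}.
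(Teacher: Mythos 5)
The paper offers no proof of this lemma at all --- it is quoted from Jakobsen's paper and closed with a $\Box$ --- so there is no internal argument to compare yours against; I can only assess your sketch on its own merits. Your simply laced argument is essentially correct: the string-length bound gives $\langle\alpha,\pi_i^\vee\rangle=-1$, adjacency of $\pi_1,\pi_2$ would force $\langle\alpha+\pi_1,\pi_2^\vee\rangle=-2$ and hence an impossible string, and $s_{\pi_2}$ then produces $\alpha+\pi_1+\pi_2$. For the bound $k\leq 2$ you could avoid the deferred weight-diagram inspection entirely: since $\mathfrak{p}^+$ is abelian, no two elements of $\Phi(\mathfrak{p}^+)$ sum to a root, so any two of them have nonnegative inner product; your reflection argument makes $\alpha+\pi_1+\pi_2+\pi_3$ a root of $\Phi(\mathfrak{p}^+)$ when $k=3$, and then $0\leq(\alpha,\alpha+\pi_1+\pi_2+\pi_3)=\|\alpha\|^2+\sum_i(\alpha,\pi_i)=2-3=-1$ is a contradiction. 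As written, however, that step of yours is an unperformed case check.

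The genuine gap is the non-simply laced case, which you defer entirely to a ``case-by-case verification in $e$-coordinates'' --- and that verification cannot be completed for the statement as quoted, because the orthogonality conclusion fails in type $\mathsf{C}_n$. Take $\mathfrak{g}_{\mathbb{R}}=\mathfrak{sp}(3,\mathbb{R})$ with noncompact simple root $\alpha_3=2e_3$, and let $\alpha=e_2+e_3\in\Phi(\mathfrak{p}^+)$. Then $\alpha+\alpha_1=e_1+e_3$ and $\alpha+\alpha_2=2e_2$ both lie in $\Phi(\mathfrak{p}^+)$, so $k=2$ with $\pi_1=\alpha_1=e_1-e_2$ and $\pi_2=\alpha_2=e_2-e_3$, which are adjacent simple roots, not orthogonal; this covering configuration is visible in the left half of the paper's own Figure~\ref{fig:schubert_varieties}. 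The conclusions $k\leq 2$ and $\alpha+\pi_1+\pi_2\in\Phi(\mathfrak{p}^+)$ do survive (here $\alpha+\pi_1+\pi_2=e_1+e_2$), and they are all that the order-dimension-2 remark following the lemma actually uses; but the clause $\pi_1\perp\pi_2$ must either be restricted to the simply laced types (where your argument proves it) or re-examined against Jakobsen's original formulation. In type $\mathsf{B}_n$ the poset $\Phi(\mathfrak{p}^+)$ is a chain, so $k\leq 1$ and everything is vacuous there. In short: your outline is recoverable for $k\leq 2$ and for the existence of the common upper cover, but the orthogonality assertion you propose to verify case by case in type $\mathsf{C}$ is not true there, so no amount of computation will close that part of your argument.
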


In light of Lemma~\ref{lemma:Ja}, the Hasse diagram of $\Phi(\mathfrak{p^{+}})$ is an upward planar graph 
of order dimension 2, and hence (upon rotating) can be drawn on a two-dimensional orthogonal lattice. 

\begin{ex}
Let
$\mathfrak{g}_{\mathbb{R}}=\su(3,2)$. 
Then we have the following (where dots denote zero entries):
\begin{center}
\begin{pspicture}(-7,0)(3,3)
\uput[l](-2,1.3){$
\mathfrak{p}^+=\left\{\left(\begin{array}{ccc|cc} 
\cdot&\cdot&\cdot&\, * & * \\[-4 pt]
\cdot&\cdot&\cdot&\, * & * \\[-4 pt]
\cdot&\cdot&\cdot&\, * & * \\[-4 pt]
\hline
\cdot&\cdot&\, \cdot & \cdot & \cdot\\[-4 pt]
\cdot&\cdot&\, \cdot & \cdot & \cdot
\end{array}
\right)\right\},  
$}
\uput[l](.3,1.3){$\Delta(\mathfrak{p}^+)=$}
\cnode*(1.5,.5){.07}{a0}
\cnode*(1,1){.07}{a1}
\cnode*(2,1){.07}{a2}
\cnode*(.5,1.5){.07}{a3}
\cnode*(1.5,1.5){.07}{a4}
%\cnode*(0,1.5){.07}{a6}
\cnode*(1,2){.07}{a7}
%\cnode*(2,1.5){.07}{a8}
%\cnode*(.5,2){.07}{a9}
%\cnode*(1.5,2){.07}{a10}
%\cnode*(1,2.5){.07}{a11}
\ncline{->}{a0}{a1}
\ncline{->}{a0}{a2}
\ncline{->}{a1}{a3}
\ncline{->}{a1}{a4}
\ncline{->}{a2}{a4}
\ncline{->}{a2}{a5}
\ncline{->}{a3}{a6}
\ncline{->}{a3}{a7}
\ncline{->}{a4}{a7}
\ncline{->}{a4}{a8}
\ncline{->}{a5}{a8}
\ncline{->}{a6}{a9}
\ncline{->}{a7}{a9}
\ncline{->}{a7}{a10}
\ncline{->}{a8}{a10}
\ncline{->}{a9}{a11}
\ncline{->}{a10}{a11}
\uput[r](1.15,.2){$\alpha_{3}=e_3-e_4$}
\uput[r](.75,2.35){$\beta =e_1-e_5$}
\uput[d](1.1,.9){\scriptsize{2}}
\uput[d](0.6,1.4){\scriptsize{1}}
%\uput[d](0.1,1.4){\scriptsize{1}}
\uput[u](0.6,1.6){\scriptsize{4}}
\end{pspicture}
\end{center}
\end{ex}

In \cite[\S 3.6]{EHP}, Enright--Hunziker--Pruett  associated  a generalized Young diagram to each lower order ideal $\Phi_w\subset \Phi(\mathfrak{p}^+)$  as follows.
Start with the poset $\Phi_w$ viewed as a subdiagram of the Hasse diagram of $\Phi(\mathfrak{p}^+)$.
Then replace the nodes of the diagram of $\Phi_w$ with boxes (squares) so that two boxes share a side if and only if the two corresponding nodes are connected.
Finally, rotate the resulting diagram $135^\circ$ clockwise so that the node corresponding to the noncompact simple root  $\alpha\in \Pi\setminus\varIota$
now corresponds to the upper-left box.

% \begin{prop}\label{P: abelian ideals}
% Suppose that $\fq$ is of Hermitian type.  Then we have a  bijection
% $$
%     {}^\varIota W \rightarrow \{\mbox{ideals of  $\fb$ contained in $\fu$}\}
% $$
% given by
% $w\mapsto \fa_w:=\sum_{\alpha\in \varPhi(\fu)\setminus \varPhi_w} \fg_\alpha.$
% \end{prop}

There is a canonical, order-inverting, length complementary involution on ${}^\fk W$, given in the following lemma. 

\begin{lemma}[{\cite[Lemma 3.14 and Cor.~3.15]{EHP} }]\label{L:involution}
Let $w_0$ and $w_{c}$ be the longest elements in $W$ and $W(\fk)$, respectively.
Then the mapping $\ \widetilde{\ }: {}^\fk W\rightarrow  {}^\fk W$
given by $x\mapsto \tilde{x}:=w_{c} xw_0$ is an order-inverting, length-complementary involution.
Moreover, we have $\Phi(\fu)\setminus \Phi_x=w_{c} \Phi_{\tilde{x}}$.
\end{lemma}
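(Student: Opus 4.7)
The plan is to establish the ``moreover'' identity first, since once it is in hand the remaining four properties drop out quickly, with only the involution property $\widetilde{\tilde{x}} = x$ being independently immediate from the fact that $w_c$ and $w_0$ are involutions.

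To prove $\Phi(\fu) \setminus \Phi_x = w_c \Phi_{\tilde{x}}$, I would unpack the definition of $\Phi_{\tilde x}$ and then transport through $w_c$. Since $w_0 \Phi^- = \Phi^+$, one has
\[
    \Phi_{\tilde x} = \Phi^+ \cap \tilde x \Phi^- = \Phi^+ \cap w_c x w_0 \Phi^- = \Phi^+ \cap w_c x \Phi^+,
\]
so applying $w_c$ gives $w_c \Phi_{\tilde x} = (w_c \Phi^+) \cap x \Phi^+$. The crucial Hermitian input is that $\mathfrak{p}^+$ is a $K$-module, so $W(\fk)$ preserves $\Phi(\fu)$ setwise; combined with $w_c \Phi^+(\fk) = \Phi^-(\fk)$, this gives the disjoint decomposition $w_c \Phi^+ = \Phi(\fu) \sqcup \Phi^-(\fk)$. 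Meanwhile, the hypothesis $x \in {}^{\fk}W$ (via the equivalent condition $x^{-1}\Phi^+(\fk) \subset \Phi^+$ from the corollary to Proposition~\ref{decomp}) forces $\Phi^-(\fk) \cap x\Phi^+ = \emptyset$. Therefore $w_c \Phi_{\tilde x} = \Phi(\fu) \cap x \Phi^+ = \Phi(\fu) \setminus \Phi_x$, using $\Phi_x \subset \Phi(\fu)$.

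Everything else is now mechanical. Well-definedness: $\Phi_{\tilde x} = w_c(\Phi(\fu) \setminus \Phi_x) \subset w_c \Phi(\fu) = \Phi(\fu)$, so $\tilde x \in {}^{\fk}W$. Length-complementarity: $\ell(\tilde x) = |\Phi_{\tilde x}| = |\Phi(\fu)| - \ell(x)$, and $|\Phi(\fu)|$ is precisely the length of the longest element of ${}^{\fk}W$ (whose $\Phi_w$ is the whole of $\Phi(\fu)$, by Lemma~\ref{L:ideal}). Order-inversion: by Lemma~\ref{L:ideal}, $v \leq w$ in ${}^{\fk}W$ iff $\Phi_v \subset \Phi_w$, which after complementing in $\Phi(\fu)$ and applying $w_c$ is equivalent to $\Phi_{\tilde w} \subset \Phi_{\tilde v}$, i.e.\ $\tilde w \leq \tilde v$.

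The main obstacle is really a matter of bookkeeping: one must notice both that $w_c$ preserves $\Phi(\fu)$ as a set (a Hermitian-specific fact, not true for general parabolics) and that the ${}^{\fk}W$-hypothesis on $x$ kills the stray compact piece $\Phi^-(\fk)$ inside $(w_c \Phi^+) \cap x\Phi^+$. Once those two observations are acknowledged, the lemma reduces to a short manipulation of $\Phi^+ \cap w\Phi^-$ and needs no case analysis.
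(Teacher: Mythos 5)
Your argument is correct and complete. Note that the paper does not actually prove this lemma --- it imports it wholesale from \cite[Lemma 3.14 and Cor.~3.15]{EHP} --- so there is no in-house proof to compare against; judged on its own merits, your derivation works, and the logical order (establish $\Phi(\fu)\setminus\Phi_x=w_c\Phi_{\tilde x}$ first, then read off well-definedness, length-complementarity, and order-inversion via Lemma~\ref{L:ideal}) is efficient. The two computations at the heart of it check out: $w_0\Phi^-=\Phi^+$ and $w_c^2=e$ give $w_c\Phi_{\tilde x}=(w_c\Phi^+)\cap x\Phi^+$, the decomposition $w_c\Phi^+=\bigl(-\Phi^+(\fk)\bigr)\sqcup\Phi(\fp^+)$ follows from $w_c$ being the longest element of $W(\fk)$ together with the $W(\fk)$-stability of $\Phi(\fp^+)$, and the stray piece $\bigl(-\Phi^+(\fk)\bigr)\cap x\Phi^+$ is killed precisely by the criterion $x^{-1}\Phi^+(\fk)\subset\Phi^+$ for membership in ${}^\fk W$. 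The remaining four properties then do follow as you describe, with the involution property immediate from $w_c^2=w_0^2=e$.

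One inaccuracy in a side remark, which does not affect the proof: the stability $w_c\Phi(\fu)=\Phi(\fu)$ is \emph{not} Hermitian-specific. For any standard parabolic $\fq_{\varIota}=\fm_{\varIota}\oplus\fu_{\varIota}$ the nilradical is an $\fm_{\varIota}$-module (if $\alpha\in\Phi_{\varIota}$ and $\beta\in\Phi^+\setminus\Phi_{\varIota}$ with $\alpha+\beta\in\Phi$, then $\alpha+\beta\in\Phi^+\setminus\Phi_{\varIota}$), so $\Phi(\fu_{\varIota})$ is stable under all of $W_{\varIota}$ in complete generality. What is genuinely special to the Hermitian (cominuscule) setting is the identification of ${}^\fk W$ with lower order ideals of $\Phi(\fp^+)$ in Lemma~\ref{L:ideal}, which you correctly invoke only for the order-inversion step.
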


% \begin{cor} 
% Also we have $\Phi(\fu)\setminus \Phi_w=w_{\,\varIota} \Phi_{\tilde{w}}$.
% \end{cor}

\begin{dfn}\label{D:labels}
Suppose $\fq$ is of Hermitian type. Define a map 
$$
f: \Phi(\fp^+) \rightarrow \Pi
$$
as follows. For $\beta\in \Phi(\fp^+)$, the sets $\Phi(\fp^+)_{<\beta}:=\{\gamma\in \Phi(\fp^+) \mid \gamma< \beta\}$ and $\Phi(\fp^+)_{\leq \beta}:=\{\gamma\in \Phi(\fp^+) \mid \gamma\leq  \beta\}$ are lower order ideals of $\Phi(\fp^+)$. Therefore, by Lemma~\ref{L:ideal},  there exist $v,w\in  {}^\mathfrak{k} W$ such that 
$\Phi(\mathfrak{p}^+)_{<\beta}=\Phi_v$ and $\Phi(\mathfrak{p}^+)_{\leq \beta}=\Phi_w$. Since $\Phi_w=\Phi_v \dot\cup\{\beta\}$, there exists a unique 
$f(\beta)\in \Pi$ such that $w=vs_{f(\beta)}$, namely $f(\beta):=v^{-1}\beta$. 
\end{dfn}

\begin{prop}[{\cite[Prop.~3.21]{EHP}}]\label{P:product}
Suppose $\fq$ is of Hermitian type.  Let $w\in {}^\mathfrak{k} W$, and write
$$\Phi_w=\{\beta_1,\beta_2, \ldots,\beta_\ell\}$$ such that for every 
$1\leq j \leq \ell$, the set $\{\beta_1,\beta_2, \ldots,\beta_j\}$ is a lower order ideal of $\Phi(\fp^+)$. 
Then
$$
    w= s_{f(\beta_1)}s_{f(\beta_2)}\cdots s_{f(\beta_l)}
$$
is a  reduced expression for $w$. Furthermore, $w=s_{\beta_\ell} s_{\beta_{\ell-1}}\cdots s_{\beta_1}$.
\end{prop}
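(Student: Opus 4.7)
My plan is to induct on $\ell = \ell(w)$, with the base case $\ell = 0$ being vacuous. For the inductive step, I will let $v \in {}^{\fk}W$ be the unique element (provided by Lemma~\ref{L:ideal}) with $\Phi_v = \{\beta_1, \ldots, \beta_{\ell-1}\}$. The induction hypothesis, applied to $v$ together with the linear extension $\beta_1, \ldots, \beta_{\ell-1}$, delivers both formulas for $v$ at once. The heart of the argument will then be the key identity $v^{-1}\beta_\ell = f(\beta_\ell)$.

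Once this identity is in hand, the inductive step closes quickly. The relation $v \cdot f(\beta_\ell) = \beta_\ell \in \Phi^+$ gives $\ell(vs_{f(\beta_\ell)}) = \ell(v) + 1$ and $\Phi_{vs_{f(\beta_\ell)}} = \Phi_v \cup \{\beta_\ell\} = \Phi_w$, so Lemma~\ref{L:ideal} forces $w = vs_{f(\beta_\ell)}$; appending $s_{f(\beta_\ell)}$ to the inductive reduced expression for $v$ gives the first formula. The second formula follows from the conjugation identity $vs_{f(\beta_\ell)}v^{-1} = s_{vf(\beta_\ell)} = s_{\beta_\ell}$, which rewrites $w$ as $s_{\beta_\ell}v$ and then, upon substituting $v = s_{\beta_{\ell-1}}\cdots s_{\beta_1}$, yields the claim.

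The main obstacle is proving the key identity. By Definition~\ref{D:labels}, $f(\beta_\ell) = (v')^{-1}\beta_\ell$, where $v' \in {}^{\fk}W$ satisfies $\Phi_{v'} = \Phi(\fp^+)_{<\beta_\ell}$. Since $\Phi_w$ is a lower order ideal containing $\beta_\ell$, every root strictly below $\beta_\ell$ lies in $\Phi_w \setminus \{\beta_\ell\} = \Phi_v$, so $\Phi_{v'} \subseteq \Phi_v$. Each $\delta \in \Phi_v \setminus \Phi_{v'}$ is then incomparable to $\beta_\ell$: by construction $\delta \not< \beta_\ell$, while $\delta \geq \beta_\ell$ would force $\beta_\ell \in \Phi_v$. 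A structural feature of the cominuscule poset $\Phi(\fp^+)$ in Hermitian type (consistent with the planar-lattice realization permitted by Lemma~\ref{lemma:Ja}) is that incomparable roots in $\Phi(\fp^+)$ are orthogonal, so $\delta \perp \beta_\ell$ for each such $\delta$.

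To convert this orthogonality into the identity $vv'^{-1}\beta_\ell = \beta_\ell$, I will invoke the induction hypothesis a second time on $v$, but with a different linear extension: fix any linear extension $\gamma_1, \ldots, \gamma_m$ of $\Phi_{v'}$ and complete it to a linear extension $\gamma_1, \ldots, \gamma_m, \delta_1, \ldots, \delta_k$ of $\Phi_v$, listing the $\delta_j$ in any order respecting the partial order on $\Phi(\fp^+)$. Since $\ell(v) < \ell(w)$, the induction hypothesis applies with this alternative linear extension, and its second formula yields $v = s_{\delta_k}\cdots s_{\delta_1}\, v'$. Each $s_{\delta_j}$ fixes $\beta_\ell$ by the orthogonality just noted, so $vv'^{-1} = s_{\delta_k}\cdots s_{\delta_1}$ fixes $\beta_\ell$, giving $v^{-1}\beta_\ell = (v')^{-1}\beta_\ell = f(\beta_\ell)$ as desired.
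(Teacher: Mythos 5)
Your argument is correct, and it is worth noting that the paper itself offers no proof of this statement: Proposition~\ref{P:product} is quoted verbatim from \cite[Prop.~3.21]{EHP}, so there is no in-text argument to compare against. Your induction on $\ell(w)$ is sound, and the two applications of the inductive hypothesis to $v$ (once with the given linear extension, once with the extension built from $\Phi_{v'}$) create no circularity, since both concern elements of length $<\ell$. The one place where you assert rather than prove is the claim that incomparable roots of $\Phi(\fp^+)$ are orthogonal; your appeal to the planar realization from Lemma~\ref{lemma:Ja} is only a plausibility remark, but the fact itself has a one-line proof that you should include: since $\fp^+$ is abelian, $\gamma+\delta\notin\Phi$ for distinct $\gamma,\delta\in\Phi(\fp^+)$, so $(\gamma,\delta)\geq 0$; and if $(\gamma,\delta)>0$ then $\gamma-\delta\in\Phi$, whence one of $\pm(\gamma-\delta)$ is a positive root and $\gamma,\delta$ are comparable. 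Two smaller points deserve a sentence each in a polished write-up: (i) when you conclude $w=vs_{f(\beta_\ell)}$ from $\Phi_{vs_{f(\beta_\ell)}}=\Phi_w$ via Lemma~\ref{L:ideal}, you should first observe that $\Phi_{vs_{f(\beta_\ell)}}\subset\Phi(\fp^+)$ places $vs_{f(\beta_\ell)}$ in ${}^\fk W$ (or simply invoke the injectivity of $u\mapsto\Phi_u$ on all of $W$); and (ii) the verification that $\gamma_1,\ldots,\gamma_m,\delta_1,\ldots,\delta_k$ is a valid linear extension of $\Phi_v$ (every initial segment a lower order ideal of $\Phi(\fp^+)$) uses that anything below some $\delta_i$ lies in the lower order ideal $\Phi_v$ and hence is either in $\Phi_{v'}$ or is an earlier $\delta_{i'}$ — routine, but it is the hypothesis under which the inductive hypothesis may legitimately be reapplied. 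With these details filled in, your proof is a complete and self-contained derivation of the cited result.
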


Proposition~\ref{P:product} gives us a very simple way to write 
canonical reduced expressions for the elements of ${}^\mathfrak{k} W$.
First, fill the boxes of the generalized Young diagram of the longest element of ${}^\mathfrak{k} W$ with the numbers $1,2,\ldots,n$,
so that the box corresponding to $\beta\in \Phi(\fp^+)$  is assigned the number $i$ such that $f(\beta)=\alpha_i$.
The upper-left box is always assigned the index of the unique noncompact simple root in $\Pi\setminus\varIota$. Full details can be found in~\cite[\S3.8]{EHP}.
    
    \subsection{Kostant modules}
Let $\mathfrak{g}$ be a complex simple Lie algebra and $\mathfrak{q}=\mathfrak{q}_{\varIota}$ a parabolic subalgebra of Hermitian type. We assume that $(\Phi,\Phi_{c}):=(\Phi,\Phi_{\varIota})$ corresponds to the Hermitian symmetric pair. Let $\mu\in \mathfrak{h}^*$ be regular, which means that $(\mu+\rho, \alpha^{\vee})\neq 0$ for all $\alpha\in \Phi^+$. The weight $\mu$ is said to be 
 \emph{antidominant} if $(\mu+\rho,\alpha^{\vee})\notin \mathbb{Z}_{> 0}$ for all $\alpha \in \Phi^+$. 
 We fix a regular infinitesimal block $\mathcal{O}_{\mu}^{\mathfrak{q}}$ of the category $\mathcal{O}^{\mathfrak{q}}$. The simple modules are denoted by $L_w:=L(w_{c}w(\mu+\rho)-\rho)$ for $w \in {}^\mathfrak{k} W$. Usually we may choose $\mu=-2\rho$ which is also regular. Thus the simple modules are $L_w=L(-w_{c}w\rho-\rho)$ in the category $\mathcal{O}_{-2\rho}^{\mathfrak{q}}$. 

By \cite{Zie18}, $L(-w\rho-\rho)$ is a highest weight Harish-Chandra module if and only if $-w\rho$ is $\Phi^+(\mathfrak{k})$-dominant. From \cite[Prop.~2.3]{BN05},  $L(-w\rho-\rho)$ is a highest weight Harish-Chandra module if and only if $w=w_cv$ for some $v\in {}^{\fk}W$.

\begin{dfn}
A simple module $L_w$ in $\mathcal{O}_{-2\rho}^{\mathfrak{q}}$ is called a \textit{Kostant module} if, as a $\fk$-module, 
\[
H^i(\mathfrak{u}, L_w) \cong \bigoplus_{\substack{x \leq w, \\ \ell(x,w) = i}} F_x
\]
for $i\geq 0$,
where $F_x$ is the finite-dimensional $\mathfrak{k}$-module with highest weight $-w_{c}x\rho-\rho $, and $\ell(x,w) = \ell(w) - \ell(x)$.
\end{dfn}

 \begin{thm}[{\cite[Thm.~6.2]{BH:09}}]\label{sub-Kostant}
 Let $\mathcal{O}^{\mathfrak{q}}_{-2\rho}$ be the regular  block for a Hermitian symmetric pair of 
 simply laced type, and let $\varIota := \Pi\setminus \{\alpha\}$, where $\alpha$ is the noncompact simple root. Let $\Gamma$ be the corresponding Dynkin diagram.  Then there is a
 bijection
$$
\left\{
\begin{array}{c}
\textup{connected subdiagrams}\\
\textup{of } \Gamma \textup{ containing } \alpha\\  
\end{array}
\right\}
\longleftrightarrow 
\{\textup{Kostant modules in }\mathcal{O}^{\mathfrak{q}}_{-2\rho}\},
$$
where we declare the empty subdiagram to belong to the left-hand set.
 \end{thm}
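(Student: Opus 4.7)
The plan is to assign to each connected subdiagram $\Gamma' \subseteq \Gamma$ containing $\alpha$ the longest element of the corresponding sub-Weyl quotient ${}^{\mathfrak{k}'}W'$, and then verify that the set of Kostant modules in $\mathcal{O}^{\mathfrak{q}}_{-2\rho}$ coincides exactly with the image of this assignment.

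For the forward map, given $\Gamma' \subseteq \Gamma$ connected and containing $\alpha$, the root subsystem $\Phi' \subseteq \Phi$ generated by $\Gamma'$ is itself the root system of a simply laced Hermitian symmetric pair, with $\alpha$ still the noncompact simple root. Let $W' \subseteq W$ and $\mathfrak{k}' \subseteq \mathfrak{k}$ denote the corresponding subgroup and Levi, and let $w(\Gamma') \in {}^{\mathfrak{k}'}W' \subseteq {}^{\mathfrak{k}}W$ be the longest element; equivalently, via Lemma~\ref{L:ideal}, $w(\Gamma')$ is the unique element whose ideal $\Phi_{w(\Gamma')} = \Phi(\mathfrak{p}^{\prime +})$ is the lower order ideal of $\Phi(\mathfrak{p}^+)$ cut out by the sub-pair. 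By Proposition~\ref{P:product} and the labeling of Definition~\ref{D:labels}, the canonical reduced expression of $w(\Gamma')$ uses exactly the simple reflections indexed by the vertices of $\Gamma'$, so the map $\Gamma' \mapsto w(\Gamma')$ is injective and one recovers $\Gamma'$ from $w(\Gamma')$.

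To show that each $L_{w(\Gamma')}$ is a Kostant module, I would compute $H^i(\mathfrak{u}, L_{w(\Gamma')})$ by combining Kostant's theorem for the sub-pair $(\mathfrak{g}', \mathfrak{k}')$ with the Hochschild--Serre spectral sequence relating $H^*(\mathfrak{u}, -)$ to $H^*(\mathfrak{u}', -)$. Because the Young diagram of $w(\Gamma')$ is the full poset $\Phi(\mathfrak{p}^{\prime +})$, the Bruhat interval $[e, w(\Gamma')]$ lies inside ${}^{\mathfrak{k}'}W'$, and each $x \leq w(\Gamma')$ contributes exactly one $\mathfrak{k}$-type $F_x$ in cohomological degree $\ell(x, w(\Gamma'))$, matching the Kostant-module formula. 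For surjectivity, the cleanest route is to identify the Kostant condition with triviality of all parabolic Kazhdan--Lusztig polynomials $P^{\mathfrak{q}}_{v,w}(q) = 1$ for $v \leq w$ in ${}^{\mathfrak{k}}W$, which is equivalent to rational smoothness of the Schubert variety $X_Q(w^{-1}) \subseteq G/Q$. In simply laced types, Carrell--Peterson's theorem says rational smoothness coincides with smoothness, and Hong's classification~\cite{Hong:07} of smooth Schubert varieties in cominuscule $G/Q$ then identifies every such $w$ as some $w(\Gamma')$.

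The main obstacle is the surjectivity step, specifically reducing the Kostant cohomology condition to rational smoothness: this requires Koszul duality for parabolic category $\mathcal{O}$ together with standard BGG-resolution criteria. A more direct but laborious alternative, actually carried out in~\cite{BH:09}, is a case analysis over the four simply laced Hermitian types, classifying all lower order ideals $\Phi_w \subseteq \Phi(\mathfrak{p}^+)$ whose associated $L_w$ has the required cohomology---rectangles in type $A$, prescribed staircase/rectangle shapes in type $D$, and finite lists for $E_6$ and $E_7$---and matching these on the nose with connected sub-Dynkin diagrams containing $\alpha$.
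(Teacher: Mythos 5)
First, note that the paper does not actually prove this statement: it is imported verbatim as \cite[Thm.~6.2]{BH:09}, so there is no in-paper proof to match your argument against line by line. Judged on its own terms, your proposal is half solid and half gapped. The surjectivity half is essentially correct and, in fact, reassembles exactly the machinery the paper itself imports elsewhere: the equivalence of the Kostant condition with triviality of the parabolic Kazhdan--Lusztig polynomials and hence with rational smoothness of $X_Q(w^{-1})$ is Theorem~\ref{T:BGG} (due to \cite{EHP}, proved there via the Poincar\'e polynomial criterion, so there is no circularity with \cite{BH:09}), the identification of rational smoothness with smoothness in simply laced types is quoted from \cite[\S 4.3]{EHP}, and Hong's classification is Proposition~\ref{P:smooth}. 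The only loose end there is the small verification that Hong's smooth Schubert variety $L_{\mathrm{I}}/(L_{\mathrm{I}}\cap Q)$ is indexed by precisely your $w(\Gamma')^{-1}$, which is routine given Lemma~\ref{L:ideal}.

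The genuine gap is in the forward direction. You assert that $H^i(\mathfrak{u}, L_{w(\Gamma')})$ can be computed ``by combining Kostant's theorem for the sub-pair with the Hochschild--Serre spectral sequence,'' but this presupposes knowing the structure of the simple $\mathfrak{g}$-module $L_{w(\Gamma')}$ as a module over $\mathfrak{g}'$ (or over $\mathfrak{u}'=\mathfrak{u}\cap\mathfrak{g}'$), which is essentially the content of what is to be proved; nothing in your sketch identifies the $E_2$ page or explains why the spectral sequence degenerates to give exactly one copy of $F_x$ in degree $\ell(x,w(\Gamma'))$ for each $x\le w(\Gamma')$. Note that the present paper sidesteps this entirely by a different route: Proposition~\ref{unitarity} shows directly that $L(\lambda_{\Phi'})=L_{w(\Gamma')}$ sits at the last unitary reduction point and is therefore unitarizable, and Theorem~\ref{kostant-module} (Enright) then gives that it is a Kostant module. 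If you want a self-contained forward direction, that unitarity argument is the cleaner path; otherwise your fallback of citing the case analysis of \cite{BH:09} is legitimate but amounts to citing the theorem rather than proving it.
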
	
When $\mathfrak{g}$ is not simply laced, that is, when $(\Phi,\Phi_{c})=(\mathsf{B}_n,\mathsf{B}_{n-1})$ or $(\mathsf{C}_n,\mathsf{A}_{n-1})$, respectively, we will require some additional notation as follows.
Let $\Gamma^\vee$ be the Dynkin diagram dual to $\Gamma$, and let $\alpha^\vee$ be the simple root in $\Gamma^\vee$ dual to the noncompact simple root $\alpha$ in the simple system $\Pi$ of $\Phi$.  Let $\Phi^\vee$ be the root system of $\Gamma^\vee$ with simple roots $\Pi^\vee$. Let $ \varIota^\vee=\Pi^\vee \setminus \{\alpha^\vee \}$. Define the \emph{simply laced cover} of $\Gamma^\vee$ to be the Dynkin diagram $\widehat{\Gamma}$ of type $\mathsf{A}_{2n-1}$ or $\mathsf{D}_{n+1}$, respectively.  Let $\widehat{\Phi}$ be the root system of $\widehat{\Gamma}$ with simple roots $\widehat{\Pi}$. Let $\widehat {\varIota}=\widehat{\Pi} \setminus \{\widehat{\alpha} \}$, where $\widehat{\alpha}=\alpha_1$ (resp.\ $\alpha_{n+1}$). Thus we have associated to $\Gamma=(\mathsf{B}_n,\mathsf{B}_{n-1})$ or $(\mathsf{C}_n, \mathsf{A}_{n-1})$, respectively, the simply laced Hermitian symmetric pair  $\widehat{\Gamma} = (\mathsf{A}_{2n-1}, \mathsf{A}_{n-1} \times \mathsf{A}_{n-1})$ or $(\mathsf{D}_{n+1}, \mathsf{A}_n)$, respectively.

\begin{thm}[{\cite[Thm.~7.2]{BH:09}}]\label{BC-Kostan}
Let $\mathcal{O}^{\mathfrak{q}}_{-2\rho}$ be the regular block for a Hermitian symmetric pair. Then there is a Dynkin diagram ${\mathcal{D}}$ with one crossed node $\alpha'$, such that there is a bijection 
$$
\{\,\textup{Kostant modules in }\mathcal{O}^{\mathfrak{q}}_{-2\rho}\,\} \longleftrightarrow 
\left\{
\begin{array}{c}
\textup{connected subdiagrams}\\
\textup{of } \mathcal{D} \textup{ containing } \alpha'\\  
\end{array}
\right\},
$$
where we declare the empty subdiagram to belong to the right-hand set.
If $\Gamma$ is simply laced, then $\mathcal{D}=\Gamma$ and $\alpha'=\alpha$; otherwise, $\mathcal{D}=\widehat{\Gamma}$ and $\alpha'=\widehat{\alpha}$.
\end{thm}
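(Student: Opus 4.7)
My plan is to reduce the two non-simply laced cases $(\mathsf{B}_n, \mathsf{B}_{n-1})$ and $(\mathsf{C}_n, \mathsf{A}_{n-1})$ to the already-proven simply laced case of Theorem~\ref{sub-Kostant}. The guiding idea is that the simply laced cover $\widehat{\Gamma}$ supplies a natural combinatorial frame in which Kostant modules for the non-simply laced $\Phi$ correspond to a distinguished subset of Kostant modules for the larger simply laced $\widehat{\Phi}$, so that transport of the simply laced bijection recovers the desired one.

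First, for $(\Phi, \Phi_c) = (\mathsf{B}_n, \mathsf{B}_{n-1})$ the noncompact root is $\alpha = \alpha_1$ and the poset $\Phi(\mathfrak{p}^+) = \{e_1 - e_j,\, e_1,\, e_1 + e_j : 2 \leq j \leq n\}$ is a chain of length $2n-1$, so $|{}^{\mathfrak{k}}W| = 2n$. I would verify that \emph{every} simple module $L_w$ in $\mathcal{O}^{\mathfrak{q}}_{-2\rho}$ is Kostant: any lower order ideal of a chain is again a chain, so the Bruhat interval $[e, w] \cap {}^{\mathfrak{k}}W$ is totally ordered, the relevant parabolic Kazhdan--Lusztig polynomials are all trivial, and Kostant's formula delivers the required $\mathfrak{u}$-cohomology decomposition term by term. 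The bijection with connected subdiagrams of $\widehat{\Gamma} = \mathsf{A}_{2n-1}$ containing $\widehat{\alpha} = \alpha_1$ then sends the lower ideal of size $k$ (for $0 \leq k \leq 2n-1$) to the initial interval $\{\alpha_1, \ldots, \alpha_k\}$, with the empty ideal going to the empty subdiagram.

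For $(\Phi, \Phi_c) = (\mathsf{C}_n, \mathsf{A}_{n-1})$, where $\Phi(\mathfrak{p}^+)$ is the shifted staircase and $|{}^{\mathfrak{k}}W| = 2^n$, only a proper subset of simple modules are Kostant. I would identify this subset explicitly: Kostant modules should correspond to a distinguished family of $2n$ shifted shapes — informally, ``thin hook'' ideals consisting of a horizontal row through the noncompact corner, possibly together with a single descending column. The bijection with the $2n$ connected subdiagrams of $\widehat{\Gamma} = \mathsf{D}_{n+1}$ containing the spin node $\widehat{\alpha} = \alpha_{n+1}$ matches the row-length to the depth of the tail from $\alpha_{n+1}$ toward $\alpha_1$ via $\alpha_{n-1}, \alpha_{n-2}, \ldots$, and the column-length to whether and how far one extends onto the other spin node $\alpha_n$; that this exhausts and parameterizes such subdiagrams is then a direct combinatorial check.

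The main obstacle will be the type $\mathsf{C}$ classification itself: ruling out every non-thin-hook shifted shape as a Kostant module. My preferred route is a transport-of-structure argument that realizes $\mathcal{O}^{\mathfrak{q}}_{-2\rho}$ for $(\mathsf{C}_n, \mathsf{A}_{n-1})$ as a categorical piece (via Koszul duality or an explicit functor) of the block for $(\mathsf{D}_{n+1}, \mathsf{A}_n)$, so that Theorem~\ref{sub-Kostant} on the simply laced side descends to the desired classification on the $\mathsf{C}_n$ side. Failing a categorical argument, one can compute parabolic Kazhdan--Lusztig polynomials on ${}^{\mathfrak{k}}W$ directly using the explicit determinantal formulas available in Hermitian type, together with the $q$-analog of Kostant's theorem on $\mathfrak{u}$-cohomology, and verify the vanishing criterion shape by shape.
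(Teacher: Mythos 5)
First, a caveat: the paper does not prove this statement itself --- it is imported verbatim from \cite[Thm.~7.2]{BH:09}, and the only part of that proof the paper later invokes is that $\Phi(\mathfrak{p}^+)$ and $\widehat{\Phi}(\widehat{\mathfrak{p}}^+)$ are isomorphic as posets; combined with the coincidence of the parabolic Kazhdan--Lusztig data on ${}^{\mathfrak{k}}W\cong{}^{\widehat{\mathfrak{k}}}\widehat{W}$, this transports the simply laced classification of Theorem~\ref{sub-Kostant} to types $\mathsf{B}$ and $\mathsf{C}$ wholesale, with no shape-by-shape analysis. Your overall strategy of reducing to the simply laced cover is therefore the right one, and your treatment of $(\mathsf{B}_n,\mathsf{B}_{n-1})$ is correct: $\Phi(\mathfrak{p}^+)$ is a chain, the relevant KL polynomials are trivial, all $2n$ simple modules are Kostant, and they match the $2n$ initial segments (including $\varnothing$) of $\mathsf{A}_{2n-1}$ starting at $\alpha_1$.

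The genuine gap is your type $\mathsf{C}$ classification. The Kostant lower order ideals in the shifted staircase for $(\mathsf{C}_n,\mathsf{A}_{n-1})$ are \emph{not} ``thin hooks'' (a row with a descending column): they are the single rows $(p)$ for $0\le p\le n$ together with the full staircases $(p,p-1,\dots,1)$ for $2\le p\le n$ --- exactly $2n$ shapes, as one reads off from the generalized Young diagrams in the paper's proof of Proposition~\ref{generalizeddiagram}, from \S5.3, and from Figure~\ref{fig:schubert_varieties}. Already for $n=3$ the hook $(3,1)$ is a valid lower ideal but is \emph{not} Kostant (it is one of the two non--rationally-smooth nodes in the figure), while the Kostant shape $(3,2,1)$ is a triangle, not a hook; so your proposed family is wrong in both directions, and the ``direct combinatorial check'' you defer to would fail. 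Relatedly, your matching ``column-length $\leftrightarrow$ how far one extends onto the other spin node $\alpha_n$'' cannot work, since $\alpha_n$ is a terminal node of $\mathsf{D}_{n+1}$: one either includes it or not, so it cannot encode a depth parameter. The correct dictionary is: type-$\mathsf{A}$ subdiagrams of $\widehat{\Gamma}$ through $\alpha_{n+1}$ $\leftrightarrow$ single rows ($\su(1,p)$), and type-$\mathsf{D}$ subdiagrams $\leftrightarrow$ full staircases ($\sp(p,\RR)$). The repair is precisely your first fallback: run the entire argument through the poset isomorphism and the equality of parabolic KL polynomials so that Theorem~\ref{sub-Kostant} for $(\mathsf{D}_{n+1},\mathsf{A}_n)$ (resp.\ $(\mathsf{A}_{2n-1},\cdot)$) delivers the correct family, rather than guessing it and verifying shape by shape.
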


From the proof given in \cite[Thm.~7.2]{BH:09}, we have that $\Phi(\mathfrak{p}^+)$ and $\widehat{\Phi}(\widehat{\mathfrak{p}}^+)$ are isomorphic as posets. Since the elements in ${}^\mathfrak{k} W$ correspond to generalized Young diagrams from Proposition \ref{P:product}, which also correspond to lower order ideals of $\Phi(\mathfrak{p}^+)$, we find that  ${}^\mathfrak{k} W$ and ${}^{\widehat{\mathfrak{k}}} \widehat{W}$ are isomorphic as posets. Thus we have the following corollary.

\begin{cor}\label{kos-bij}
    With notation as in Theorem~\ref{BC-Kostan}, we have a bijection
    $$
\{\,\textup{Kostant modules in }\mathcal{O}^{\mathfrak{q}}_{-2\rho}\,\} \longleftrightarrow 
\left\{
\textup{Kostant modules in }\mathcal{O}^{\widehat{\mathfrak{q}}}_{-2\widehat{\rho}}
\right\}.
$$
\end{cor}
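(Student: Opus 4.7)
The plan is to obtain the bijection by composing the two classifications already in hand, using the fact that the simply laced cover $\widehat{\Gamma}$ is by construction simply laced, so Theorem~\ref{sub-Kostant} applies to the Hermitian symmetric pair $\widehat{\Gamma}$. In more detail, Theorem~\ref{sub-Kostant} (applied to $\widehat{\Gamma}$) yields a bijection
\[
\{\textup{Kostant modules in }\mathcal{O}^{\widehat{\mathfrak{q}}}_{-2\widehat{\rho}}\} \longleftrightarrow \{\textup{connected subdiagrams of } \widehat{\Gamma} \textup{ containing } \widehat{\alpha}\}.
\]
On the other hand, Theorem~\ref{BC-Kostan} yields a bijection between Kostant modules in $\mathcal{O}^{\mathfrak{q}}_{-2\rho}$ and the connected subdiagrams of $\mathcal{D}$ containing $\alpha'$. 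The strategy is to observe that the right-hand sets coincide in both cases, and to compose.

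First I would dispose of the simply laced case: if $\Gamma$ is simply laced, then by Theorem~\ref{BC-Kostan} we have $\mathcal{D} = \Gamma = \widehat{\Gamma}$ and $\alpha' = \alpha = \widehat{\alpha}$, so Theorem~\ref{BC-Kostan} is literally Theorem~\ref{sub-Kostant} in this case, and there is nothing to prove (both sides of the claimed bijection coincide with the same set of subdiagrams, and in fact $\mathfrak{q} = \widehat{\mathfrak{q}}$).

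In the non-simply laced case, $\Gamma$ is of type $\mathsf{B}_n$ or $\mathsf{C}_n$ (with the indicated compact part), and $\mathcal{D} = \widehat{\Gamma}$ is of type $\mathsf{A}_{2n-1}$ or $\mathsf{D}_{n+1}$ respectively, with $\alpha' = \widehat{\alpha}$. Theorem~\ref{BC-Kostan} gives a bijection between Kostant modules in $\mathcal{O}^{\mathfrak{q}}_{-2\rho}$ and connected subdiagrams of $\widehat{\Gamma}$ containing $\widehat{\alpha}$, while Theorem~\ref{sub-Kostant} applied to $\widehat{\Gamma}$ gives a bijection between those same subdiagrams and the Kostant modules in $\mathcal{O}^{\widehat{\mathfrak{q}}}_{-2\widehat{\rho}}$. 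Composing these two bijections (one in its forward direction, one in reverse) yields the desired bijection.

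The main substance is already encoded in Theorems~\ref{sub-Kostant} and~\ref{BC-Kostan}; the corollary itself is a formal consequence of transitivity through their common parameterizing set. The only conceptual point worth flagging, which the remark preceding the corollary makes explicit, is that the bijection is compatible with the poset isomorphism $\Phi(\mathfrak{p}^+)\cong \widehat{\Phi}(\widehat{\mathfrak{p}}^+)$ underlying the construction of $\widehat{\Gamma}$; combined with Proposition~\ref{P:product}, this gives ${}^{\mathfrak{k}}W\cong {}^{\widehat{\mathfrak{k}}}\widehat{W}$ as posets, and the two Kostant sets are distinguished subsets on each side that correspond under this isomorphism. If one wished to make the bijection explicit on Weyl group elements (rather than just on cardinalities), this poset isomorphism would be the tool of choice; so the only potential obstacle, and it is minor, is checking that under the identification ${}^{\mathfrak{k}}W \cong {}^{\widehat{\mathfrak{k}}}\widehat{W}$ the two theorems single out the same subset of elements, which is immediate from the common indexing by subdiagrams containing the noncompact simple root.
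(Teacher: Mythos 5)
Your proposal is correct and follows essentially the same route as the paper: the paper likewise obtains the bijection by identifying both sets of Kostant modules with the connected subdiagrams of $\widehat{\Gamma}$ containing $\widehat{\alpha}$ (via Theorems~\ref{sub-Kostant} and~\ref{BC-Kostan}), underpinned by the poset isomorphism $\Phi(\mathfrak{p}^+)\cong\widehat{\Phi}(\widehat{\mathfrak{p}}^+)$ and hence ${}^{\mathfrak{k}}W\cong{}^{\widehat{\mathfrak{k}}}\widehat{W}$, which you also flag. Your explicit composition of the two classification bijections is exactly the step the paper leaves implicit after the word ``Thus.''
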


\subsection{Schubert varieties}	
	
Let $G$ be a complex simple algebraic group, $Q \supset B$ a parabolic subgroup containing the standard Borel, and $T \subset B$ the maximal torus.

\begin{dfn}
The \textit{generalized Schubert cell} associated to $w \in W^{\rm I}$ is
\[
C_{Q}(w) := B w Q \subset G/Q.
\]
Its Zariski closure is the \textit{Schubert variety} $X_Q(w) := \overline{C_Q(w)}$. When $Q=B$, we simply denote $X_B(w)$ by $X(w)$.
\end{dfn}

% Key properties:
% \begin{itemize}
%     \item $\dim_{\mathbb{C}} X(w) = \ell(w)$ (complex dimension equals length)
%     \item Cell decomposition: $X(w) = \bigcup_{v \leq w} C(v)$
%     \item Homology: $H_{2i}(X(w), \mathbb{Z}) \cong \mathbb{Z}^{\#\{v \leq w : \ell(v)=i\}}$
%     \item Poincaré polynomial: $p_w(t) = \sum_{v \leq w} t^{\ell(v)}$
% \end{itemize}

% \begin{figure}[h]
% \centering
% \begin{tikzpicture}[scale=0.6]
% \draw[thick] (0,0) grid (4,1);
% \draw[pattern=north west lines] (0,0) rectangle (1,1);
% \node at (0.5,0.5) {$X(w)$};
% \draw[thick] (6,0) grid (9,2);
% \draw[pattern=north west lines] (6,0) rectangle (7,1) (6,1) rectangle (7,2) (7,1) rectangle (8,2);
% \node at (7.5,1.5) {Smooth};
% \node[below] at (2,0) {$(A_4, A_2 \times A_1)$};
% \node[below] at (7.5,0) {Rational smoothness};
% \end{tikzpicture}
% \caption{Smooth Schubert varieties for Hermitian symmetric pair $(A_4, A_2 \times A_1)$. Rectangular Young diagrams correspond to smooth varieties.}
% \end{figure}

% When $G/P$ is a \textit{Hermitian symmetric space} (i.e., $\mathfrak{p}$ is of Hermitian type):
% \begin{itemize}
%     \item $G/P \cong K_{\mathbb{R}}/M_{\mathbb{R}}$ as compact Hermitian symmetric spaces
%     \item Bruhat and weak Bruhat orders coincide on $W^1$ (Corollary 3.12)
%     \item Schubert varieties correspond to generalized Young diagrams (Section 3.6)
% \end{itemize}

From Kazhdan--Lusztig \cite{KL79}, a Schubert variety is called \emph{rationally smooth}
whenever its certain cohomology groups are trivial. From Billey--Lakshmibai \cite[\S 4.2]{BL00}, a point $x\in X(w)$ is said to be a smooth point of $X(w)$ if the local ring $\mathcal{O}_{X(w),x}$ is a regular local ring;
moreover, $X(w)$ is called \emph{smooth} if any point of $X(w)$  is a smooth point.

\begin{prop}[{\cite[Thm.~E]{ca94}}]\label{ra-smooth}
The Schubert variety $X(w)$ is rationally smooth if and only if  its Poincar\'{e} polynomial $p_w(t) = \sum_{v \leq w} t^{\ell(v)}$ is palindromic.
\end{prop}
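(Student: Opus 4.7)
The plan is to establish both implications using the Bruhat cell decomposition of $X(w)$ together with the standard characterization of rational smoothness via the Bruhat graph at torus-fixed points.

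For the forward direction, I would assume $X(w)$ is rationally smooth. Then $X(w)$ is a rational homology manifold of complex dimension $\ell(w)$, so Poincar\'e duality holds with $\mathbb{Q}$-coefficients: $\dim H^{2i}(X(w),\mathbb{Q}) = \dim H^{2\ell(w)-2i}(X(w),\mathbb{Q})$. The Bruhat decomposition $X(w) = \bigsqcup_{v \leq w} BvB/B$ consists of affine cells of complex dimension $\ell(v)$, so odd Betti numbers vanish and $\dim H^{2i}(X(w),\mathbb{Q}) = \#\{v \leq w : \ell(v) = i\}$. Reading these as coefficients of $p_w$, Poincar\'e duality yields $p_w(t) = t^{\ell(w)} p_w(1/t)$, i.e.\ palindromicity.

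For the reverse direction, suppose $p_w(t)$ is palindromic. I would invoke the Carrell--Peterson criterion via the Bruhat graph $BG(w)$, whose vertices are $\{v \in W : v \leq w\}$ and whose edges connect $v$ to $vs_\beta$ for positive roots $\beta$ with $vs_\beta \leq w$. Local analysis at a torus-fixed point $vB$, identifying the $T$-weights occurring in the Zariski tangent space with the edges of $BG(w)$ incident to $v$, gives the inequality
\[
\deg_{BG(w)}(v) \geq \ell(w) \qquad \text{for every } v \leq w,
\]
with equality at $v$ if and only if $X(w)$ is rationally smooth at $vB$. Summing these contributions and packaging them through the generating function $p_w$, the palindromic hypothesis forces every inequality to be saturated, whence $X(w)$ is rationally smooth at every fixed point, and therefore globally.

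The main obstacle is the reverse direction, and specifically the local inequality $\deg_{BG(w)}(v) \geq \ell(w)$ together with its sharp equality criterion. This rests on describing $T_{vB}X(w)$ through $T$-stable curves and showing that the count of such independent curves through $vB$ matches $\dim T_{vB}X(w)$ precisely when $X(w)$ is a rational homology manifold near $vB$. Once this local-to-global principle is in hand, palindromicity does the bookkeeping required to pass from a single saturated vertex to uniform saturation across all of $\{v : v \leq w\}$.
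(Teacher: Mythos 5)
The paper does not prove this statement at all: it is quoted verbatim as Theorem~E of Carrell's paper \cite{ca94} (the Carrell--Peterson theorem), so what you are really doing is attempting to reprove the cited result. Your forward direction is fine and is the standard argument: rational smoothness gives $\mathbb{Q}$-Poincar\'e duality, and the Bruhat cell decomposition identifies $\dim H^{2i}$ with the number of $v\leq w$ of length $i$, so the rank generating function of $[e,w]$ is palindromic.

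The reverse direction is where the entire content of the theorem lives, and your proposed foundation for it is not correct. First, the identification of the $T$-weights of the Zariski tangent space $T_{vB}X(w)$ with the edges of the Bruhat graph at $v$ fails outside type $\mathsf{A}$: the edges correspond to $T$-stable curves through $vB$, whose tangent directions span only a subspace of $T_{vB}X(w)$ in general. Second, and more seriously, the asserted equality criterion --- that the number of such curves matches $\dim T_{vB}X(w)$ precisely when $X(w)$ is a rational homology manifold near $vB$ --- is false: at a rationally smooth but singular point (these exist already for $\Sp_4$) one has $\deg_{BG(w)}(v)=\ell(w)<\dim T_{vB}X(w)$, so your ``iff'' fails in exactly the cases that distinguish rational smoothness from smoothness. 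The correct local criterion compares $\deg_{BG(w)}(v)$ with $\ell(w)=\dim X(w)$, not with the tangent space, and its proof (Deodhar's inequality $\deg_{BG(w)}(v)\geq\ell(w)$, plus Peterson's theorem that equality for all $v\leq w$ is equivalent to rational smoothness) goes through Kazhdan--Lusztig polynomials ($X(w)$ is rationally smooth at $vB$ iff $P_{x,w}=1$ for all $v\leq x\leq w$), not through Zariski tangent spaces. Your summation step is salvageable --- one needs the fact that the downward degree of $v$ in $BG(w)$ is exactly $\ell(v)$, whence palindromicity gives $2p_w'(1)=\ell(w)p_w(1)$ and forces saturation of Deodhar's inequality everywhere --- but as written the ``local-to-global principle'' you defer to is essentially the theorem itself, justified by a mechanism that does not work.
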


\begin{prop}[{\cite[Cor.~4]{Ca-11}}]\label{inverse}
    A Schubert variety $X(w)$ in $G/B$ is smooth if and only if the corresponding inverse Schubert variety $X(w^{-1})$ is also smooth.
\end{prop}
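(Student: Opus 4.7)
The plan is to leverage the symmetry of the Bruhat order under inversion and then upgrade this from rational smoothness to genuine smoothness.

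First, I would use the standard fact that $v\le w$ if and only if $v^{-1}\le w^{-1}$, together with $\ell(v)=\ell(v^{-1})$, to conclude that
\[
p_w(t)=\sum_{v\le w}t^{\ell(v)}=\sum_{v^{-1}\le w^{-1}}t^{\ell(v^{-1})}=p_{w^{-1}}(t).
\]
By Proposition~\ref{ra-smooth}, this already shows that $X(w)$ is rationally smooth if and only if $X(w^{-1})$ is rationally smooth. The nontrivial content of the proposition is that the same equivalence holds for genuine smoothness.

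Second, I would construct an explicit isomorphism $X(w)\cong X(w^{-1})$ of algebraic varieties. The natural candidate is the inverse morphism $\iota\colon G\to G$, $g\mapsto g^{-1}$, which is an automorphism of the variety $G$ (although it is an anti-automorphism of the group) and satisfies $\iota(\overline{BwB})=\overline{Bw^{-1}B}$. Since $\iota$ intertwines the left and right $B$-actions, to descend to the flag variety one first composes with the canonical identification $G/B\xrightarrow{\sim}B\backslash G$, $gB\mapsto Bg^{-1}$, and then identifies $B\backslash G$ with $G/B$ in a Schubert-compatible way. Tracking this composition carefully should yield the desired isomorphism $X(w)\cong X(w^{-1})$, from which smoothness transfers immediately.

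The main obstacle, in my view, is the bookkeeping in the last step: one must verify that the composite identification really sends $X(w)$ to $X(w^{-1})$ rather than to an opposite Schubert variety, and that it is an isomorphism of varieties rather than merely a bijection on $T$-fixed points. If this direct route becomes awkward, I would fall back on a criterion-based argument. Kumar's tangent-space criterion expresses smoothness of $X(w)$ at a $T$-fixed point $vB$ (for $v\le w$) as a combinatorial condition on the reflections $t\in\Phi^{+}$ satisfying $v<vt\le w$, and this condition is manifestly symmetric under $(w,v)\mapsto(w^{-1},v^{-1})$. Alternatively, the Billey--Postnikov pattern-avoidance criterion mentioned in the introduction produces a finite list of "singular" patterns that are closed under inversion (the self-inverse patterns $3412$ and $4231$ in type A being the prototype), again giving the desired symmetry.
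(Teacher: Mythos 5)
The paper gives no proof of this proposition; it simply cites \cite{Ca-11}, so the comparison here is with the standard argument in the literature. Your first step is fine: $v\le w\Leftrightarrow v^{-1}\le w^{-1}$ and $\ell(v)=\ell(v^{-1})$ give $p_w=p_{w^{-1}}$, hence invariance of \emph{rational} smoothness. The genuine gap is in your second step. The inverse morphism $\iota(g)=g^{-1}$ does carry $\overline{BwB}$ to $\overline{Bw^{-1}B}$ inside $G$, but it descends only to an isomorphism $G/B\to B\backslash G$, sending $X(w)$ to $B\backslash\overline{Bw^{-1}B}$, and there is no ``Schubert-compatible'' identification of $B\backslash G$ with $G/B$ of the kind you hope for: the identification $Bg\mapsto g^{-1}B$ composes with $\iota$ to give the identity map, while an identification induced by an anti-automorphism of $G$ preserving $B$ and $T$ (transpose composed with conjugation by a representative of $w_0$) lands on $X(w_0ww_0)$, not $X(w^{-1})$. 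Worse, whether $X(w)$ and $X(w^{-1})$ are abstractly isomorphic varieties is a delicate question in its own right (part of the isomorphism problem for Schubert varieties studied by Richmond and Slofstra), and the proposition neither needs nor should rely on such an isomorphism.

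There are two honest repairs. First, you can stay upstairs in $G$: since $G\to G/B$ is a Zariski-locally trivial $B$-bundle ($B$ is special), $X(w)$ is smooth if and only if its preimage $\overline{BwB}\subset G$ is smooth, and likewise for $X(w^{-1})$ and $\overline{Bw^{-1}B}$; the automorphism $\iota$ of the variety $G$ exchanges these two preimages, which finishes the proof without ever producing a map $X(w)\to X(w^{-1})$. Second, the route actually taken in the literature is the one your fallback gestures at, but localized at the base point rather than at all $v\le w$: the singular locus of $X(w)$ is closed and $B$-stable, hence if nonempty contains the unique closed $B$-orbit $\{eB\}$, so $X(w)$ is smooth iff it is smooth at $eB$; and $\dim T_{eB}X(w)=\#\{\alpha\in\Phi^{+}\mid s_\alpha\le w\}$, a quantity invariant under $w\mapsto w^{-1}$ because $s_\alpha=s_\alpha^{-1}$, while $\dim X(w)=\ell(w)=\ell(w^{-1})$. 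By contrast, your claim that Kumar's criterion is ``manifestly symmetric'' under $(w,v)\mapsto(w^{-1},v^{-1})$ is not manifest (inversion exchanges left and right reflections, $s_\alpha v\mapsto v^{-1}s_\alpha=s_{v^{-1}\alpha}v^{-1}$), and closure of the Billey--Postnikov pattern list under inversion would have to be checked type by type; the tangent-space-at-$eB$ argument avoids both issues.
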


% \begin{proposition}[4.3, Carrell-Peterson]
% For $G/P$ Hermitian symmetric, $X(w)$ is rationally smooth iff its Poincaré polynomial $p_w(t)$ is \textit{palindromic} (satisfies Poincaré duality).
% \end{proposition}

% \begin{theorem}[4.9]
% Rational smoothness is equivalent to:
% \[
% P_{x,w}^1(q) = 1 \quad \forall x \leq w
% \]
% where $P_{\bullet,\bullet}^1$ are parabolic Kazhdan-Lusztig polynomials.
% \end{theorem}

% Combinatorial characterization for Hermitian symmetric spaces:
% \begin{itemize}
%     \item Type A: $X(w)$ smooth iff Young diagram is a rectangle
%     \item Types C/D: Smooth iff shifted Young diagram is single row or staircase
%     \item simply laced types: Rationally smooth $\iff$ smooth
% \end{itemize}
In simply laced types,  a Schubert variety in $G/Q$ is rationally smooth if and only if it is smooth~\cite[\S 4.3]{EHP}.
For any Hermitian symmetric space $G/Q$, the smooth Schubert varieties are also easy to describe.
Let $Q'\subset G$ be another parabolic subgroup containing $B$ and with  Levi decomposition
$Q'=LN$.
Then $L\cap B$ is a Borel subalgebra and $L\cap Q$ is a parabolic subgroup of the reductive group $L$ .  
Furthermore, $B=(L\cap B)N$. Now let $L$ act on $G/Q$ by left multiplication. Since the stabilizer
of $eQ$ is the parabolic subgroup $L\cap Q$, 
the orbit map $\varphi:L\rightarrow G/Q$, given by $x\mapsto x(eQ)=xQ$, induces a closed embedding
of the generalized flag variety $L/(L\cap Q)$ in $G/Q$.

\begin{lemma}[{\cite[Lemma 4.4]{EHP}}]
Under the closed embedding $L/(L\cap Q)\hookrightarrow G/Q$, the image of any $(L\cap B)$-orbit
in $L/(L\cap Q)$ is a $B$-orbit in $G/Q$. In particular, the image of $L/(L\cap Q)$ is a smooth Schubert variety
in $G/Q$. Furthermore, this Schubert variety is the  $Q'$-orbit of the coset $eQ$.
\end{lemma}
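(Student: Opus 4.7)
\medskip

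\noindent\textbf{Proof plan.} My plan is to exploit the decomposition $B=(L\cap B)N$ together with the facts that $N\subset Q$ (since $N\subset B\subset Q$) and that $L$ normalizes $N$ (since $N$ is the unipotent radical of $Q'\supset L$). These two observations collapse a $B$-orbit on the image of $\varphi$ to an $(L\cap B)$-orbit, which is the heart of the first claim.

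\smallskip

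\noindent\textit{Step 1: $B$-orbits on $\operatorname{Im}\varphi$ coincide with $(L\cap B)$-orbits.} Let $x\in L$, so that $\varphi(x(L\cap Q))=xQ$. Using $B=(L\cap B)N$ I would compute
\[
B\cdot xQ \;=\; (L\cap B)\,N\,x\,Q \;=\; (L\cap B)\,x\,(x^{-1}Nx)\,Q \;=\; (L\cap B)\,x\,N\,Q,
\]
where the last equality uses that $L$ normalizes $N$. Since $N\subset B\subset Q$, we have $NQ=Q$, so $B\cdot xQ=(L\cap B)\cdot xQ=\varphi\bigl((L\cap B)\cdot x(L\cap Q)\bigr)$. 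This proves the first assertion.

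\smallskip

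\noindent\textit{Step 2: The image is the $Q'$-orbit of $eQ$.} The image of $\varphi$ equals $L\cdot eQ=LQ/Q$. Since $Q'=LN$ and $N\subset Q$, I have $Q'Q=LNQ=LQ$, hence $\operatorname{Im}\varphi=Q'Q/Q=Q'\cdot eQ$, giving the third assertion.

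\smallskip

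\noindent\textit{Step 3: The image is a smooth Schubert variety.} Because $\varphi$ is a closed embedding and $L/(L\cap Q)$ is a (smooth, projective) partial flag variety for the reductive group $L$, the image $\operatorname{Im}\varphi$ is a smooth closed subvariety of $G/Q$. To identify it as a Schubert variety I would apply the Bruhat decomposition of $L$ with respect to $(L\cap B,L\cap Q)$: the variety $L/(L\cap Q)$ is the disjoint union of $(L\cap B)$-orbits indexed by the minimal coset representatives of $W_L/W_{L\cap Q}$, with a unique open dense one corresponding to the longest such representative, say $w_0^L$. By Step 1, each of these $(L\cap B)$-orbits maps onto a $B$-orbit in $G/Q$; in particular the open cell maps onto an open $B$-orbit $Bw_0^LQ/Q$ inside $\operatorname{Im}\varphi$. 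Since $\operatorname{Im}\varphi$ is closed, irreducible, and $B$-invariant, it coincides with the Zariski closure of this $B$-orbit, i.e.\ with the Schubert variety $X_Q(w_0^L)$.

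\smallskip

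\noindent\textit{Main obstacle.} The whole argument hinges on the two bookkeeping facts $N\subset Q$ and $xNx^{-1}=N$ for $x\in L$; once these are in place the manipulation in Step 1 is essentially forced, and Steps 2--3 are then fairly formal consequences of the Bruhat decomposition of $L$ combined with the closedness of $\varphi$. So the only genuine care required is to record carefully why $N\subset B\subset Q$ (from the hypothesis $B=(L\cap B)N$ and $B\subset Q$) and why $L$ normalizes $N$ (from $N\trianglelefteq Q'$ and $L\subset Q'$).
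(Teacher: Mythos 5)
Your proof is correct, and it follows what is essentially the standard argument (the paper itself quotes this lemma from \cite[Lemma 4.4]{EHP} without reproducing a proof, relying on exactly the setup $B=(L\cap B)N$, $N\subset B\subset Q$, and $L$ normalizing $N$ that you use). The orbit computation in Step 1, the identification $\operatorname{Im}\varphi=Q'\cdot eQ$ in Step 2, and the identification of the image with the closure of its dense $B$-orbit via the Bruhat decomposition of $L/(L\cap Q)$ all match the intended reasoning.
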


Suppose $Q$ is any (standard) maximal parabolic subgroup of $G$, not necessarily of Hermitian type.
For  $\mathrm{I} \subset \Pi$, let $Q_{\mathrm{I}}=L_{\,\mathrm{I}}N_{\,\mathrm{I}}$
be the standard parabolic subgroup corresponding to $\mathrm{I}$.
We say that  $\mathrm{I}$ is \emph{connected} if the corresponding subgraph of the Dynkin diagram of $G$ is connected.
Then we can define an injective map from the set  of connected subsets of $\Pi$ containing $\alpha$ to 
the set of smooth Schubert varieties in $G/Q$, via $\mathrm{I}\mapsto L_\mathrm{\,I}/(L_\mathrm{\,I}\cap Q)\hookrightarrow G/Q$.
%\footnote{Without the assumptions on $\mathrm{J}$, the map would still be defined, but it would not be injective
%(cf. Boe-Hunziker\cite{BoeHunziker:09}).} 

\begin{prop}[{\cite[Prop.  2.11]{Hong:07}}]\label{P:smooth}
Suppose that $G/Q$ is a Hermitian symmetric space. Let $\Gamma$ be the Dynkin diagram of $G$.
Then we have a bijection
$$
\left\{
\begin{array}{c}\textup{connected subdiagrams of $\ \Gamma$}\\
\textup{containing the noncompact simple root }
\end{array}
\right\}
\rightarrow \{\textup{smooth Schubert varieties in $G/Q$}\},
$$
given as above by 
$\mathrm{I}\mapsto L_\mathrm{\,I}/(L_\mathrm{\,I}\cap Q)\hookrightarrow G/Q$.
\end{prop}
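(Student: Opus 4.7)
The map $\mathrm{I}\mapsto L_\mathrm{\,I}/(L_\mathrm{\,I}\cap Q)\hookrightarrow G/Q$ was already shown in the paragraph preceding the proposition to be well-defined and injective (using Lemma~4.4 of \cite{EHP}). Thus the content of the proposition is \emph{surjectivity}: every smooth Schubert variety in the Hermitian symmetric space $G/Q$ arises as the image of some connected subdiagram of $\Gamma$ containing the noncompact simple root $\alpha$.

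The plan is to describe Schubert varieties in $G/Q$ through lower order ideals of $\Phi(\mathfrak{p}^+)$, equivalently through generalized Young diagrams (Lemma~\ref{L:ideal}), and then to invoke the known classification of smooth Schubert varieties in Hermitian symmetric spaces due to Lakshmibai--Weyman~\cite{LW-90} in classical types, and uniformly via singular loci by Robles~\cite{Ro-14}. In each of the seven Hermitian types, the Young shapes corresponding to smooth $X_Q(w)$ form a small, explicit family---typically ``rectangular'' or ``staircase'' sub-shapes sitting inside the ambient poset diagram of $\Phi(\mathfrak{p}^+)$.

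Next I would match these shapes to connected subdiagrams. A connected subdiagram $\mathrm{I}$ containing $\alpha$ cuts out a Levi subgroup $L_\mathrm{\,I}$ such that $(L_\mathrm{\,I}, L_\mathrm{\,I}\cap K)$ is itself a Hermitian symmetric pair of smaller rank, and the corresponding Schubert cell is the full flag variety $L_\mathrm{\,I}/(L_\mathrm{\,I}\cap Q)$. The Young shape of this variety, viewed inside $\Phi(\mathfrak{p}^+)$, is precisely the lower order ideal of a smooth Schubert variety identified in the previous step. Comparing the two enumerations type-by-type---connected subdiagrams on one side and smooth Schubert shapes on the other---closes the argument, since both sides have equal (and small) cardinality in each case.

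The main obstacle is the type-by-type verification. Both sides are combinatorial and easy to list, but the matching requires care in types $\mathsf{D}_n$, $\mathsf{E}_6$, and $\mathsf{E}_7$, where the poset $\Phi(\mathfrak{p}^+)$ and the classification of singular loci are less symmetric than in the Grassmannian case. A diagram-free alternative would replace the case analysis with a direct Poincar\'e polynomial computation (using Proposition~\ref{ra-smooth}, since in simply laced types smoothness and rational smoothness coincide), but the most transparent route remains the explicit enumeration of shapes against subdiagrams in each Hermitian type.
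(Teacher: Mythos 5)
The paper itself gives no proof of this proposition: it is imported verbatim as \cite[Prop.~2.11]{Hong:07}, with only the construction of the injective map $\mathrm{I}\mapsto L_{\,\mathrm{I}}/(L_{\,\mathrm{I}}\cap Q)$ supplied in the preceding paragraphs via \cite[Lemma 4.4]{EHP}. So there is no in-paper argument to compare yours against; the relevant comparison is with Hong's original proof, which rests on exactly the fact you identify as the crux --- that every smooth Schubert variety in a Hermitian symmetric space is homogeneous, i.e.\ an $L_{\,\mathrm{I}}$-orbit for a connected $\mathrm{I}$ containing the noncompact simple root. Your strategy (injectivity from the embedding lemma, surjectivity from the classification of smooth Schubert varieties via Lakshmibai--Weyman and Robles, then a shape-versus-subdiagram matching) is the standard route and is essentially the one taken in the literature.

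That said, as written your text is a plan rather than a proof: the entire mathematical content of the proposition is the surjectivity, and you defer it to ``a small, explicit family'' of shapes and a type-by-type matching that you do not carry out. Two points deserve care if you execute it. First, $L_{\,\mathrm{I}}/(L_{\,\mathrm{I}}\cap Q)$ is a cominuscule flag variety of $L_{\,\mathrm{I}}$ (a smaller Hermitian symmetric space), not a ``full flag variety'' as you write. Second, your proposed diagram-free alternative via Poincar\'e polynomials only detects \emph{rational} smoothness; in the non-simply-laced Hermitian cases $(\mathsf{B}_n,\mathsf{B}_{n-1})$ and $(\mathsf{C}_n,\mathsf{A}_{n-1})$ there are rationally smooth Schubert varieties in $G/Q$ that are not smooth (this is precisely the discrepancy exploited in Remark~\ref{rat-not-sm} and \S\ref{case-proof}), so that alternative cannot replace the case analysis outside the simply laced types. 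With the enumeration actually performed, your outline would constitute a valid proof; without it, the key step remains an appeal to the very classification results the proposition encapsulates.
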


% \begin{thm}[4.5, Hong]
% For $G/P$ Hermitian symmetric, smooth Schubert varieties are classified by connected subdiagrams of the Dynkin diagram containing the distinguished node:
% \[
% \left\{\!\!\begin{aligned} 
% \text{Connected subdiagrams}\\ 
% \text{containing marked node}
% \end{aligned}\right\} \xrightarrow{\sim} \left\{\!\!\begin{aligned} 
% \text{Smooth}\\ 
% \text{Schubert varieties}
% \end{aligned}\right\}
% \]
% via $J \mapsto L_J/(L_J \cap P) \hookrightarrow G/P$.
% \end{thm}

% \begin{example}[4.6]
% For $(C_3, A_2)$, 8 Schubert varieties exist: 6 rationally smooth, 4 smooth.
% \end{example}

\begin{thm}[{\cite[Thm.~5.15]{EHP}}] \label{T:BGG}
Let $L_w$ be a simple module in ${\mathcal O}^\fq_{-2\rho}$. For $w\in {}^\fk W$, the following are equivalent:
\begin{enumerate}
\item[{\rm (i)}]  $L_w$ is a Kostant module.
\item[{\rm (ii)}] The Schubert variety $X_Q(w^{-1})$ in $G/Q$ is rationally smooth.
\end{enumerate}
\end{thm}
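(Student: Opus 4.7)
The plan is to prove the equivalence by threading both conditions through the same intermediate statement: the triviality of the parabolic Kazhdan--Lusztig polynomials $P^{\fq}_{x,w}(q)$ for $x\le w$ in ${}^\fk W$. In one direction, the Kostant condition forces a BGG-type resolution whose Euler characteristic pins down all multiplicities in the Verma flag of $L_w$; in the other, rational smoothness of $X_Q(w^{-1})$ forces all these multiplicities to be $1$, from which the resolution (and hence the Kostant condition) can be reconstructed.

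First I would exploit Kostant's theorem for $\fu$-cohomology of generalized Verma modules, which gives
\[
H^{i}(\fu, N(-w_c x\rho-\rho)) \cong \begin{cases} F_x & \text{if } i=\ell(x),\\ 0 & \text{otherwise,}\end{cases}
\]
for $x\in {}^\fk W$. Comparing this with the definition of a Kostant module and chasing the spectral sequence of $\fu$-cohomology along a putative resolution
\[
\cdots \to \bigoplus_{\substack{x\le w\\ \ell(x,w)=i}} N(-w_c x\rho-\rho) \to \cdots \to N(-w_c w\rho-\rho) \to L_w \to 0,
\]
one sees that the Kostant property is equivalent to the existence of such a resolution, and, at the level of characters, to the BGG-type identity
\[
\ch L_w \;=\; \sum_{\substack{x\le w\\ x\in {}^\fk W}} (-1)^{\ell(x,w)} \,\ch N(-w_c x\rho-\rho).
\]

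Next I would invoke the Kazhdan--Lusztig conjecture for category $\mathcal{O}^{\fq}_{-2\rho}$ (a theorem of Beilinson--Bernstein and Brylinski--Kashiwara). This identifies the Jordan--H\"older multiplicities $[N(-w_c x\rho-\rho):L_w]$ with values at $1$ of parabolic Kazhdan--Lusztig polynomials $P^{\fq}_{x,w}(q)$. The character identity above holds precisely when $P^{\fq}_{x,w}(q) \equiv 1$ for every $x \le w$ in ${}^\fk W$. Because $G/Q$ is cominuscule, the parabolic Kazhdan--Lusztig polynomials coincide with their ordinary counterparts $P_{x,w}(q)$ via the bijection ${}^\fk W \leftrightarrow W^\fk$ given by $x\mapsto x^{-1}$ (Boe's theorem in simply laced types; Deodhar's comparison in general).

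Finally I would close the loop using Carrell--Peterson (Proposition~\ref{ra-smooth}) and Deodhar's criterion: $X_Q(w^{-1}) \subset G/Q$ is rationally smooth iff its Poincar\'e polynomial is palindromic, which by Deodhar is equivalent to $P_{x,w^{-1}}(q) \equiv 1$ for all $x \le w^{-1}$ in $W^\fk$. Translating back through the inversion bijection yields the triviality of all $P^{\fq}_{x,w}(q)$ for $x\le w$ in ${}^\fk W$, completing the equivalence.

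The main obstacles I anticipate are threefold. First, one must justify that the Kostant cohomology condition actually forces a BGG resolution (and not merely the character identity); this requires Rocha-Caridi's construction of parabolic BGG resolutions together with a projectivity argument in $\mathcal{O}^{\fq}$ to lift the character identity to an exact complex. Second, the cominuscule identification of parabolic with ordinary KL polynomials is the crux where the hypothesis of Hermitian symmetric type is really used; outside the cominuscule setting the equivalence would fail. Third, bookkeeping of the involution $\tilde{\phantom{x}}$ of Lemma~\ref{L:involution} and the inversion $w\mapsto w^{-1}$ — which together mediate between ${}^\fk W$-indexed modules and $W^\fk$-indexed Schubert cells in $G/Q$ — must be carried out carefully so that the Kostant module $L_w$ is matched with $X_Q(w^{-1})$ rather than with $X_Q(w)$ or $X_Q(\tilde w)$.
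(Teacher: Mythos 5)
First, note that the paper itself offers no proof of this statement: it is quoted verbatim from \cite[Thm.~5.15]{EHP}, and the route taken there is essentially the one you propose (Kostant condition $\leftrightarrow$ triviality of Kazhdan--Lusztig polynomials $\leftrightarrow$ rational smoothness), so your skeleton is the right one. However, two of your intermediate steps fail as stated. The formula $H^{i}(\fu, N(-w_c x\rho-\rho)) \cong F_x$ for $i=\ell(x)$ and $0$ otherwise is false: Kostant's theorem computes $H^i(\fu,F)$ for \emph{finite-dimensional} $\fg$-modules $F$, and the $\fu$-cohomology of a generalized Verma module is not concentrated in a single degree. Already for $\fg=\sl_2$ and $\fq=\fb$ one has $N_s=M(0)$, whose $\fu$-invariants are two-dimensional (spanned by the two singular vectors) while $H^1(\fu,M(0))$ is one-dimensional; your formula predicts $H^0=0$ and $H^1$ one-dimensional. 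The tool you actually need is the isomorphism $\operatorname{Ext}^i_{\mathcal{O}^{\fq}}(N_x,M)\cong\Hom_{\fk}(F_x,H^i(\fu,M))$, which converts the Kostant condition on $H^\bullet(\fu,L_w)$ directly into the statement that $\operatorname{Ext}^i(N_x,L_w)$ is one-dimensional for $i=\ell(x,w)$, $x\le w$, and zero otherwise; no resolution and no computation of $H^\bullet(\fu,N_x)$ are required, and your worry about upgrading a character identity to an exact BGG complex is moot for this particular equivalence.

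Second, your pivot through the character identity conflates the Kazhdan--Lusztig polynomials with their inverses. The identity $\ch L_w=\sum_{x\le w}(-1)^{\ell(x,w)}\ch N_x$ is equivalent to the triviality at $q=1$ of the \emph{inverse} parabolic KL polynomials $Q^{\fq}_{x,w}$, i.e.\ to one column of the inverse of the multiplicity matrix; this does not formally imply $P^{\fq}_{x,w}\equiv1$ for all $x\le w$. In the cominuscule setting the two conditions do coincide, but that is a theorem (via the duality $Q^{\fq}_{x,w}=P^{\fq}_{\tilde w,\tilde x}$ coming from the involution of Lemma~\ref{L:involution}, or via Boe's explicit computations), not a formality. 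The clean chain is: Kostant $\iff$ $\operatorname{Ext}^i(N_x,L_w)$ concentrated in degree $\ell(x,w)$ with dimension one $\iff$ $P^{\fq}_{x,w}\equiv1$ for all $x\le w$ in ${}^\fk W$ (Vogan's $\operatorname{Ext}$ form of the KL conjecture, valid for $\mathcal{O}^{\fq}_{-2\rho}$) $\iff$ $P_{w_cx,\,w_cw}\equiv1$ (Deodhar's comparison is by \emph{left multiplication by $w_c$}, not by inversion, and holds in all types) $\iff$ $X(w_cw)\subset G/B$ rationally smooth $\iff$ $X(w^{-1}w_c)$ rationally smooth, using $P_{x,y}=P_{x^{-1},y^{-1}}$ $\iff$ $X_Q(w^{-1})\subset G/Q$ rationally smooth, via the fibration $G/B\to G/Q$. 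With those two repairs your argument goes through and agrees with the proof in \cite{EHP}.
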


\begin{rem}\label{rat-not-sm}
When $\mathfrak{g}$ is not simply laced, from Corollary \ref{kos-bij}  we know that the Kostant modules in $\mathcal{O}^{\mathfrak{q}}_{-2\rho}$  and $\mathcal{O}^{\tilde{\mathfrak{q}}}_{-2\rho}$ are in one-to-one correspondence. The  Kostant modules in $\mathcal{O}^{{\mathfrak{q}}}_{-2\rho}$ corresponding to connected subdiagrams of $\Gamma$ will correspond to smooth Schubert varieties by Proposition \ref{P:smooth}. Recall that the  Kostant modules in $\mathcal{O}^{\widehat{\mathfrak{q}}}_{-2\rho}$ correspond to connected subdiagrams of $\widehat{\Gamma}$ by Theorem \ref{sub-Kostant}. Thus the Kostant modules in $\mathcal{O}^{{\mathfrak{q}}}_{-2\rho}$ corresponding to  rationally smooth but not smooth Schubert varieties will correspond to subdiagrams of $\widehat{\Gamma}$ that do not correspond to connected subdiagrams of $\Gamma$.
\end{rem}
 
\begin{prop}[{\cite[Cor.~6.8]{RZ23}}]\label{bij-equi}
    Let $G/Q$ be a Hermitian symmetric space.
  Suppose $v\in W^{\fk}$. The Schubert variety $X_{Q}(v) \subset G/Q$ is smooth if and only if the Schubert variety $X({vw_{c}}) \subset G/B$ is smooth, where $w_{c}$ is the longest element in $W(\mathfrak{k})$. 
\end{prop}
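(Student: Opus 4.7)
The plan is to realize $X(vw_c)$ as the total preimage of $X_Q(v)$ under the natural projection $\pi\colon G/B \to G/Q$ and then invoke the smoothness of this fibration.

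First, I would verify the identity
\[
\pi^{-1}(X_Q(v)) = X(vw_c).
\]
Using the Bruhat decomposition $Q = \bigsqcup_{u \in W(\fk)} BuB$, one has $BvQ = \bigsqcup_{u \in W(\fk)} BvBuB$. The hypothesis $v \in W^{\fk}$ is precisely the length-additivity property $\ell(vu) = \ell(v) + \ell(u)$ for every $u \in W(\fk)$, which forces $BvBuB = BvuB$ for each such $u$. Hence
\[
\pi^{-1}(C_Q(v)) \;=\; BvQ/B \;=\; \bigsqcup_{u \in W(\fk)} BvuB/B.
\]
Passing to Zariski closures, and noting that the assignment $u \mapsto vu$ is Bruhat-order preserving on $W(\fk)$ for $v \in W^{\fk}$, the longest element $vw_c$ of the coset $vW(\fk)$ dominates all others in Bruhat order. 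This yields $\pi^{-1}(X_Q(v)) = \bigcup_{u \in W(\fk)} X(vu) = X(vw_c)$, as claimed.

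Second, I would appeal to the classical fact that $\pi\colon G/B \to G/Q$ is a Zariski-locally trivial fiber bundle with smooth fiber $Q/B$; in particular, $\pi$ is a smooth surjective morphism. Since smoothness of a morphism is preserved under base change, the restriction $\pi^{-1}(Z) \to Z$ is smooth for any closed subvariety $Z \subset G/Q$, and smoothness of the total space of a smooth surjection is equivalent to smoothness of the base. Applying this with $Z = X_Q(v)$, and combining with the first step, gives the desired equivalence: $X_Q(v)$ is smooth if and only if $X(vw_c)$ is smooth.

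The main technical step is the Bruhat-theoretic identification $\pi^{-1}(X_Q(v)) = X(vw_c)$, which rests entirely on the length-additivity characterization of minimal length coset representatives in $W^{\fk}$ together with the standard rule for the Bruhat monoid. Once this is in place, the smoothness equivalence is a formal consequence of $\pi$ being a smooth surjection with smooth fiber $Q/B$.
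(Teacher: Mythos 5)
Your argument is correct. Note that the paper gives no proof of this proposition at all --- it is quoted verbatim from Richmond--Zainoulline \cite[Cor.~6.8]{RZ23} --- and the fibration mechanism you use (the identity $\pi^{-1}(X_Q(v)) = X(vw_c)$ coming from length-additivity for $v \in W^{\fk}$, followed by the fact that $\pi$ restricts to a Zariski-locally trivial $Q/B$-bundle over $X_Q(v)$) is precisely the fiber-bundle structure on Schubert varieties that underlies the cited result. The only steps you gloss over are routine: the interchange of preimage and closure is justified because $\pi$ is open (being flat), and the identification of $\overline{\bigcup_{u} BvuB/B}$ with $X(vw_c)$ uses the standard fact that $u \mapsto vu$ is a Bruhat-order isomorphism of $W(\fk)$ onto the coset $vW(\fk)$ when $v$ is the minimal-length representative, so that $vw_c$ dominates every $vu$.
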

  % \begin{proof}
  %     From \cite[Chap. 8 ]{BL00} or \cite{De85}, we know that the Schubert variety $X(w)$ is smooth if and only if the Schubert variety $X(y)$ is smooth for all $y\leq w$ in the Bruhat order. Let $w^c$ be the longest element of ${}^{\fk}W$, then we have $w_0=w_cw^c$ and $\Phi_{w^c}=\Phi(\fp^+)$. For any $x\in {}^{\fk}W$, we have $\Phi_x\subset \Phi_{w^c}$. Thus by Lemma \ref{L:ideal}, we have $x\leq w^c$, which implies that $w_cx\leq w_cw^c=w_0$. Therefore $X(w_cx)$ is smooth since $X(w_0)$ is smooth.????????
  % \end{proof}  
% From Proposition \ref{inverse}, for $v\in W^{\fk}$, we know that the Schubert variety $X({vw_{c}}) \subset G/B$ is smooth if and only if the Schubert variety $X({w_{c}v^{-1}}) \subset G/B$ is smooth since $w_{c}^{-1}=w_{c}$, equivalently the Schubert variety $X_{Q}(v) \subset G/Q$ is smooth by Proposition \ref{bij-equi}. 
% On the other direction, for $v\in {}^{\fk}W$, the Schubert variety $X({w_{c}v}) \subset G/B$ is smooth if and only if the Schubert variety $X({v^{-1}w_{c}}) \subset G/B$ is smooth by Proposition \ref{inverse}, equivalently the Schubert variety $X_{Q}(v) \subset G/Q$ is smooth by Theorem \ref{bij-equi}.

From Proposition \ref{inverse}, for $v\in {}^{\fk}W$, we know that the Schubert variety $X({w_{c}v}) \subset G/B$ is smooth if and only if the Schubert variety $X({v^{-1}w_{c}}) \subset G/B$ is smooth since $w_{c}^{-1}=w_{c}$, equivalently the Schubert variety $X_{Q}(v^{-1}) \subset G/Q$ is smooth by Proposition \ref{bij-equi}. 
%\textcolor{red}{I'll clarify this phrasing, after I make sure that I understand the statement.}

\begin{cor}\label{s-bij}
    Let $G/Q$ be a Hermitian symmetric space.
  Suppose $v\in {}^{\fk}W$. The Schubert variety $X_{Q}(v^{-1}) \subset G/Q$ is smooth if and only if the Schubert variety $X({w_{c}v}) \subset G/B$ is smooth, where $w_{c}$ is the longest element in $W(\mathfrak{k})$.
\end{cor}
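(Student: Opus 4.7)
The corollary follows by chaining together the two propositions immediately preceding it, together with the observation that the longest element $w_c\in W(\k)$ is an involution. My plan is as follows.

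First, I would observe that $v\in {}^{\k}W$ is equivalent to $v^{-1}\in W^{\k}$, since inversion swaps the sets of minimal-length representatives of left cosets and right cosets (this is immediate from the length-preservation property in Proposition~\ref{decomp}). This brings $v^{-1}$ into the form needed to invoke Proposition~\ref{bij-equi}.

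Next, applying Proposition~\ref{bij-equi} to the element $v^{-1}\in W^{\k}$ gives the equivalence
\[
X_Q(v^{-1})\subset G/Q \text{ is smooth} \iff X(v^{-1}w_c)\subset G/B \text{ is smooth}.
\]
Then, using Proposition~\ref{inverse} (the equivalence of smoothness for $X(w)$ and $X(w^{-1})$), together with the fact that $w_c$ is an involution, hence $w_c^{-1}=w_c$, I get
\[
X(v^{-1}w_c) \text{ is smooth} \iff X\bigl((v^{-1}w_c)^{-1}\bigr) = X(w_c v) \text{ is smooth}.
\]
Concatenating the two equivalences yields the claim.

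There is no real obstacle here: the whole argument is a one-line deduction, and in fact the paragraph immediately preceding Corollary~\ref{s-bij} already sketches this exact reasoning. The only subtlety worth emphasizing in the write-up is the inversion $v\leftrightarrow v^{-1}$ that translates between $W^{\k}$ and ${}^{\k}W$, which is what allows Proposition~\ref{bij-equi} (stated for $W^{\k}$) to be applied in the present setting (stated for ${}^{\k}W$).
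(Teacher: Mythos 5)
Your proposal is correct and is essentially identical to the paper's own argument, which likewise chains Proposition~\ref{bij-equi} (applied to $v^{-1}\in W^{\fk}$) with Proposition~\ref{inverse} and the involutivity of $w_c$. The only cosmetic remark is that the step $v\in{}^{\fk}W\iff v^{-1}\in W^{\fk}$ is immediate from Definition~\ref{D:IW}, where $W^{\fk}$ is defined as the set of inverses of elements of ${}^{\fk}W$, so no appeal to length considerations is needed.
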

% On the other direction, for $v\in W^{\rm J}$, the Schubert variety $X({vw_{\rm J}}) \subset G/B$ is smooth if and only if the Schubert variety $X({w_{\rm J}v^{-1}}) \subset G/B$ is smooth by Proposition \ref{inverse}, equivalently the Schubert variety $X_{P_{\rm J}}(v) \subset G/P_{\rm J}$ is smooth by Theorem \ref{bij-equi}.

\subsection{Unitary highest weight modules}\label{QR}
Let $L(\lambda)$ be a highest weight Harish-Chandra module. Let \(\beta\) be the unique maximal root in \(\Phi^{+}\). Choose \(\zeta \in \mathfrak{h}^{*}\) such that \(\zeta\) is orthogonal to \(\Phi(\mathfrak{k})\) and satisfies \( (\zeta, \beta^{\vee})= 1\).
Let \(\rho\) denote half the sum of  positive roots in \(\Phi^{+}\). We may write
\(
\lambda = \lambda_0 + z\zeta,
\)
where \(\lambda_0 \in \mathfrak{h}^{*}\) satisfies \( (\lambda_0 + \rho, \beta^{\vee}) = 0\), and \(z = ( \lambda + \rho, \beta^{\vee})  \in \mathbb{R}\).
Now we suppose that $\lambda=\lambda_0+z\zeta  \in \mathfrak{h}^{*}$ is $\Phi^+(\mathfrak{k})$-dominant integral. Define
$
Z(\lambda_0):=\{z\in \mathbb{R}\mid L(\lambda_0+z\zeta)\ \mbox{is unitarizable}\}.
$
By  Enright--Howe--Wallach~\cite[Thm.~2.4]{EHW}, the set $Z(\lambda_0)$ is given by the diagram shown in Figure~\ref{zk}.
\begin{figure}[H]
\centering
$
\begin{pspicture}(-1,-.5)(12,.2)
\cnode*(10,0){.07}{z1}
\cnode*(9,0){.07}{z2}
\cnode*(8,0){.07}{z3}
\cnode*(7,0){.07}{z4}
\cnode*(6,0){.07}{z5}
\cnode*(5,0){.07}{z6}
\psline[linewidth=2.0pt](-1,0)(5,0)
\psline[linewidth=0.5pt,arrowsize=4pt]{->}(-1,0)(12,0)
\psline[linewidth=0.5pt](0,-.1)(0,0.1)
\psline[linewidth=0.5pt](7,0)(7,-0.6)
\psline[linewidth=0.5pt](8,0)(8,-0.6)
\psline[linewidth=0.5pt]{->}(6.7,-.4)(7,-0.4)
\psline[linewidth=0.5pt]{->}(8.3,-.4)(8,-0.4)
\uput[d](10,-.05){$b(\lambda_0)$}
\uput[d](7.5,-.1){$c$}
\uput[d](5,-.05){$a(\lambda_0)$}
\uput[d](0,-.05){$0$}
\uput[r](12,0){$z$}
%\uput[d](6,-1){{\sc Figure 1}: The set $Z(\tau)$}
\end{pspicture}
$
\caption{The set $Z(\lambda_0)$}\label{zk}
\end{figure}
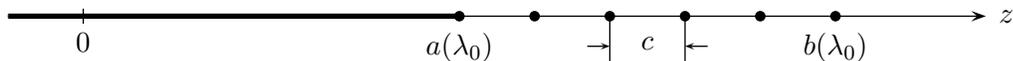

\noindent Here, $a(\lambda_0)$ is the so-called first unitary reduction point and $b(\lambda_0)$ is the last unitary reduction point. Both of these reduction points depend on certain root systems associated with $\lambda_0$; see \cite{EHW} or \cite{BH}. The set  $Z(\lambda_0)$ includes the ray ending at $a(\lambda_0)$, as well as certain reduction points between $a(\lambda_0)$ and $b(\lambda_0)$ that are spaced at an interval of length $c$, whose value (from~\cite{EHW}) can be found in  Table~\ref{constants-k}.
\begin{center}
		\begin{table}[ht]
        
			{\begingroup

            \renewcommand{\arraystretch}{1.5}
\setlength\tabcolsep{15pt}
            \begin{tabular}
    {c|c|c|c}
					%\begin{tabular}{|l|l|l|l|}
					\hline
					$\mathfrak{g}_{\R}$ &   $r$ & $c$ & $h^\vee$ \\  
					\hline  
					$\mathfrak{su}(p,n-p)$ & $\min\{p, n-p\}$ &  $1$ & $n$  \\  \hline
					$\mathfrak{sp}(n,\mathbb{R})$  & $n$ &   $\frac{1}{2}$ & $n+1$   \\  \hline 
					$\mathfrak{so}^{*}(2n)$  & $[\frac{n}{2}]$  & $2$ & $2n-2$ \\ \hline
					$\mathfrak{so}(2,2n-1)$  & $2$  &  $n-\frac{3}{2}$ & $2n-1$ \\ \hline 
					$\mathfrak{so}(2,2n-2)$  & $2$ &  $n-2$& $2n-2$ \\ \hline
					$\mathfrak{e}_{6(-14)}$  & $2$ &  $3$& $12$ \\ \hline
					$\mathfrak{e}_{7(-25)}$  & $3$ &  $4$& $18$ \\ %\hline
					\hline
				\end{tabular}
                \endgroup
			}\caption{Some constants of  Lie groups of Hermitian type}\label{constants-k}
		\end{table}
	\end{center}

% When $\lambda_0=-(\rho,\beta^\vee)\zeta$, we will have $a(\lambda_0)=z_{r-1}=(\rho,\beta^\vee)-(r-1)c$ and $b(\lambda_0)=z_0=(\rho,\beta^\vee)$. We note that it is quite surprising that for a    general $\lambda_0$, we have $$Z(\lambda_0)\subseteq Z(-(\rho,\beta^\vee)\zeta)=(-\infty,z_{r-1})\cup \{z_{r-1},\ldots,z_0\}
% $$
% from \cite[Lemma 6.1]{BH}.

Now we recall two root systems
 $\rQ(\lambda_{0})$ and $ \rR(\lambda_{0})$ given in \cite{EHW}, which are constructed as follows.  Let $\Phi_{c}(\lambda_{0})=\{\alpha \in \Phi(\mathfrak{k})| (\lambda_{0}, \alpha)=0\}$. Consider the root subsystem $\Psi_1$ of $\Phi$, which is generated by $\pm \beta$ and $\Phi_{c}(\lambda_{0})$. Let  $\rQ(\lambda_{0})$ be the simple component of  $\Psi_1$ which contains $-\beta.$ If $\Phi$ has two root lengths  and there exist short  roots $\alpha'\in \Phi(\mathfrak{k})$ that are not orthogonal to $\rQ(\lambda_{0})$ and satisfy $(\lambda_{0},\alpha'^{\vee})=1$, then let $\Psi_2$ be the root system generated by $\pm\beta$, $\Phi_{c}(\lambda_{0})$, and all such $\alpha$.
Let $\rR(\lambda_{0})$ be the simple component of $\Psi_2$ which contains $-\beta$. If $\Phi$ has only one root length or no such $\alpha$ exists, then we let $\rR(\lambda_{0})=\rQ(\lambda_{0})$. Note that $\Phi_{c}(\lambda_{0})=\Phi_{c}(\lambda)$ since $(\lambda_0,\alpha)=(\lambda,\alpha)$. Thus we have $\rQ(\lambda_{0})=\rQ(\lambda)$	 and $\rR(\lambda_{0})=\rR(\lambda)$.	 
For a root system $\Phi$, we write $h_{\Phi}^{\vee}=(\rho,\beta^{\vee})+1$
 to denote the dual Coxeter number of $\Phi$; see Table~\ref{constants-k}.
 
\begin{thm}[{\cite[Thm. ~3.2]{BH2025}}]\label{last-unitary}
The last unitary reduction point $b(\lambda_0)$ is given by $$b(\lambda_0)=h_{\rQ(\lambda_0)}^\vee - 1 + \frac{r_{\rR(\lambda_0)} - r_{\rQ(\lambda_0)}}{2},$$
where $r_{\rQ(\lambda_0)}$ and $r_{\rR(\lambda_0)}$ denote the split rank of $\rQ(\lambda_0)$ and $\rR(\lambda_0)$ respectively.
    
\end{thm}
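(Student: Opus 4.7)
The plan is to deduce the formula from the Enright--Howe--Wallach classification~\cite[Thm.~2.4]{EHW}, using the shape of $Z(\lambda_0)$ recorded in Figure~\ref{zk}. By that structural result, the isolated unitary reduction points between $a(\lambda_0)$ and $b(\lambda_0)$ form an arithmetic progression of common difference $c$ (see Table~\ref{constants-k}), and the total number of them is controlled by the split rank of the \emph{larger} root system $\rR(\lambda_0)$, namely $r_{\rR(\lambda_0)}$. Consequently
\[
b(\lambda_0) \;=\; a(\lambda_0) \;+\; (r_{\rR(\lambda_0)} - 1)\,c,
\]
and the problem is reduced to computing $a(\lambda_0)$ in a form that can be combined with the above to recover the stated answer.

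My next step would be to invoke the EHW description of the first unitary reduction point $a(\lambda_0)$, which is governed by the \emph{smaller} root system $\rQ(\lambda_0)$. Rewriting the EHW expression using the basic identity $h^{\vee}_{\rQ(\lambda_0)} - 1 = (\rho_{\rQ(\lambda_0)},\beta^{\vee})$, where $\rho_{\rQ(\lambda_0)}$ is half the sum of positive roots of $\rQ(\lambda_0)$, I expect to arrive at an expression of the form
\[
a(\lambda_0) \;=\; h^{\vee}_{\rQ(\lambda_0)} - 1 - (r_{\rR(\lambda_0)} - 1)\,c \;+\; \frac{r_{\rR(\lambda_0)} - r_{\rQ(\lambda_0)}}{2}.
\]
Substituting into the previous display, the $(r_{\rR(\lambda_0)}-1)c$ contributions cancel cleanly and leave exactly $h^{\vee}_{\rQ(\lambda_0)} - 1 + \tfrac{1}{2}(r_{\rR(\lambda_0)} - r_{\rQ(\lambda_0)})$, as claimed.

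The main obstacle is the case $\rR(\lambda_0) \neq \rQ(\lambda_0)$, which arises only in the non-simply-laced Hermitian types $\sp(n,\RR)$ and $\so(2,2n-1)$, where short compact roots $\alpha'$ satisfying $(\lambda_0,{\alpha'}^{\vee}) = 1$ are adjoined to $\rQ(\lambda_0)$. In this case one must carefully track the correction term $\tfrac{1}{2}(r_{\rR(\lambda_0)} - r_{\rQ(\lambda_0)})$ and check that it faithfully accounts for the extra isolated unitary points appearing in the non-simply-laced setting. A straightforward way to finish is a case check: verify the identity separately when $\rR(\lambda_0) = \rQ(\lambda_0)$ and when $\rR(\lambda_0) \supsetneq \rQ(\lambda_0)$, in each instance cross-referencing the values of $c$ and $h^{\vee}$ supplied by Table~\ref{constants-k}. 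The uniformity of the final answer strongly suggests, however, that a more conceptual derivation---for example via Parthasarathy's Dirac operator inequality or the theta correspondence---should exist and would bypass the case-by-case check, at the cost of heavier machinery.
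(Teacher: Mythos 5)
First, a point of orientation: the paper does not prove Theorem~\ref{last-unitary} at all --- it is quoted from \cite[Thm.~3.2]{BH2025} and used as an input elsewhere (e.g., in the proofs of Propositions~\ref{unitarity} and~\ref{generalizeddiagram}). So there is no internal proof to compare yours against; what you are attempting is a reproof of the cited result directly from the Enright--Howe--Wallach classification.

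As a proof, your proposal has a genuine gap, and it sits exactly where the content of the theorem lies. Both of your displayed formulas are asserted rather than derived. The first, $b(\lambda_0)=a(\lambda_0)+(r_{\rR(\lambda_0)}-1)c$, misstates the structure of $Z(\lambda_0)$: in \cite[Thm.~2.8]{EHW} the number of equally spaced unitary points beyond the end of the continuous ray depends on \emph{both} root systems (it is $\tfrac{1}{2}\bigl(r_{\rR(\lambda_0)}+r_{\rQ(\lambda_0)}\bigr)-1$), not on $r_{\rR(\lambda_0)}$ alone; your count agrees with this only when $\rQ(\lambda_0)=\rR(\lambda_0)$. The second display, your expression for $a(\lambda_0)$, is introduced with ``I expect to arrive at'' and is visibly reverse-engineered so that the sum telescopes to the desired answer; it is not the formula for the first reduction point given in \cite{EHW}, and no derivation is offered. (In particular, passing from the intrinsic identity $h^\vee_{\rQ(\lambda_0)}-1=(\rho_{\rQ(\lambda_0)},\beta^\vee)$ to the EHW normalization, which pairs against the coroot of the highest root $\beta$ of the ambient $\Phi$, requires checking that $-\beta$ is the relevant long root of $\rQ(\lambda_0)$ and that the bilinear forms are compatibly normalized.) Since your two inaccuracies would have to cancel, and since you explicitly defer the case $\rR(\lambda_0)\neq\rQ(\lambda_0)$ --- which occurs only for $\sp(n,\RR)$ and $\so(2,2n-1)$ and is the \emph{only} case in which the correction term $\tfrac{1}{2}(r_{\rR(\lambda_0)}-r_{\rQ(\lambda_0)})$ is nonzero --- to an unperformed ``case check,'' the argument establishes at most the equal-rank statement $b(\lambda_0)=h^\vee_{\rQ(\lambda_0)}-1$ and leaves the actual assertion of the theorem unproved. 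To close the gap you would need to quote \cite[Thm.~2.8]{EHW} precisely (both the count of discrete points and the formula for the first reduction point $a(\lambda_0)$) and then verify the resulting arithmetic in the two non-simply-laced families, using the values of $c$ and $h^\vee$ from Table~\ref{constants-k}.
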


From Enright--Howe--Wallach \cite[\S 5]{EHW} or Enright--Joseph \cite[\S 1.4]{EJ}, the split rank $r$ of  the root system $\Phi$ of $\mathfrak{g}_{\mathbb{R}}$ is equal to the number of strongly orthogonal positive noncompact roots in $\Phi(\mathfrak{p}^{+})$; see Table~\ref{constants-k}.

\begin{thm}[{\cite[Thm~2.2]{Enright}}]\label{kostant-module}
 If $L(\lambda)$ is a unitary highest weight module, then it is a Kostant module. 
%Moreover, its $\mathfrak{u}$-cohomology is given by:
% \[
% H^i(\mathfrak{u}, L(\lambda)) \cong \bigoplus_{\substack{y \in {}^E(W_{[\lambda]}) \\ \ell_{[\lambda]}(y) = i}} F_{\bar{y} \cdot \lambda}
% \]
% where ${}^E(W_{[\lambda]})$ is the Weyl group of the \textit{reduced Hermitian pair} associated to $\lambda$.
\end{thm}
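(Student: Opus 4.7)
The plan is to show that every unitary highest weight module $L(\lambda)$ admits a BGG-type resolution by generalized Verma modules whose $i$-th term is a direct sum over Weyl elements of relative length $i$, and then to feed this resolution into the computation of $\mathfrak{u}$-cohomology. Concretely, I would like to produce an exact sequence
\[
0 \longrightarrow C_N \longrightarrow C_{N-1} \longrightarrow \cdots \longrightarrow C_1 \longrightarrow C_0 \longrightarrow L(\lambda) \longrightarrow 0,
\]
where $C_i = \bigoplus_{\ell(x,w) = i,\, x \leq w} N(x \cdot \lambda_w)$ for the unique $w \in {}^\mathfrak{k}W$ such that $L(\lambda) = L_w$ in the relevant block. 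Given such a complex, the statement of the theorem follows formally.

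The steps I would carry out are as follows. First, I would dispose of the trivial case: if $\lambda$ is not a reduction point, then $L(\lambda) = N(\lambda)$, and Kostant's theorem applied to the Levi $\mathfrak{k}$ already gives $H^0(\mathfrak{u}, N(\lambda)) = F(\lambda)$ with all higher cohomology vanishing, so $L(\lambda)$ is a Kostant module (corresponding to $w = e$). Second, for reduction points I would use the Enright--Howe--Wallach classification (Figure~\ref{zk}) to identify $\lambda$ with a distinguished value on the discrete part of $Z(\lambda_0)$; the "Wallach-type" structure at these points, governed by the integer $c$ of Table~\ref{constants-k}, controls the shape of the composition series of $N(\lambda)$. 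Third, I would establish the BGG resolution by induction on the relative length, using the parabolic translation principle to move from the trivial infinitesimal character case to arbitrary regular blocks, and exploiting the fact (à la Jantzen--Wallach) that unitarity forces the Jantzen filtration of $N(\lambda)$ to be particularly well-behaved — essentially, the radical filtration of $N(\lambda)$ matches the socle filtration of its dual, giving a self-dual two-sided resolution. Fourth, once the resolution is in place, I would compute $H^\bullet(\mathfrak{u}, L(\lambda))$ via the corresponding hypercohomology spectral sequence: since $H^\bullet(\mathfrak{u}, N(\mu)) = F(\mu)$ concentrated in degree $0$, the $E_1$ page of the spectral sequence directly gives the desired Kostant formula.

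The main obstacle is the third step, namely producing the BGG resolution with the precise length-grading. This is the substance of Enright's theorem and is not a formality: a priori, all one knows about a unitary $L(\lambda)$ is positivity of a Hermitian form, from which one must extract purely homological information about generalized Verma multiplicities. Two routes are possible. The uniform route is to use the "squaring" construction of Enright--Parthasarathy (or Enright--Wallach in the scalar case), which produces the resolution through iterated derived Zuckerman functors and exploits the rigidity of Hermitian forms under cohomological induction. The case-by-case route, restricted to Hermitian type, is to combine the Jakobsen--Vergne multiplicity-free decomposition of $L(\lambda)$ as a $\mathfrak{k}$-module with an explicit identification of the Weyl group elements $x \leq w$ in terms of the generalized Young diagram attached to $w$, as in \S3.6 of the present paper. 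In either case, verifying that the length count in the resolution matches $\ell(x, w)$ — rather than merely giving a resolution of some length — is the delicate combinatorial point, and is what ultimately links unitarity to the lower order ideal structure of $\Phi(\mathfrak{p}^+)$ that drives the rest of the paper.
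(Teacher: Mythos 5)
The paper does not actually prove this statement: Theorem~\ref{kostant-module} is imported verbatim from Enright's paper \cite[Thm.~2.2]{Enright} and used as a black box, so there is no internal proof to compare against. Judged on its own merits, your proposal is an outline whose easy parts are fine but whose central step is left unproved. The reduction to a BGG-type resolution is legitimate: if one has an exact complex $C_\bullet \to L(\lambda)$ with $C_i$ a direct sum of generalized Verma modules $N(x\cdot\lambda_w)$ over $x\le w$ with $\ell(x,w)=i$, then since $\mathfrak{u}$-cohomology of a generalized Verma module is concentrated in degree $0$ and the induced differentials between the resulting $\mathfrak{k}$-modules $F_x$ vanish for weight reasons, the hypercohomology spectral sequence does collapse and yields the Kostant formula. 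Your step one (non-reduction points) is also correct.

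The problem is that your third step \emph{is} the theorem, and you do not carry it out; you explicitly defer it to two ``routes,'' neither of which is executed and one of which rests on a false premise. Unitarity gives you positive semidefiniteness of the Shapovalov form on $N(\lambda)$, which constrains the Jantzen filtration, but the passage from ``the form is semidefinite'' to ``there is a length-graded BGG resolution indexed by $\{x\le w\}$'' is precisely the hard homological content that Enright supplies (his argument runs through the Dirac operator inequality of Parthasarathy to bound which $\mathfrak{k}$-types can occur in $H^\bullet(\mathfrak{p}^-,L(\lambda))$, combined with an Euler characteristic computation, rather than through a Jantzen-filtration self-duality of the kind you invoke). Moreover, your proposed case-by-case route relies on ``the Jakobsen--Vergne multiplicity-free decomposition of $L(\lambda)$ as a $\mathfrak{k}$-module,'' but unitary highest weight modules are not in general multiplicity free over $\mathfrak{k}$ (this holds only for special families such as the Wallach representations), so that route fails as stated. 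Finally, note that the theorem applies to arbitrary unitary $L(\lambda)$, including singular and non-integral infinitesimal characters, whereas your setup silently places $L(\lambda)$ in a regular block where $L(\lambda)=L_w$ for some $w\in{}^{\mathfrak{k}}W$; the general case requires the integral Weyl group of $\lambda$ and a correspondingly adjusted indexing set.
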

\subsection{Associated varieties}

In \cite{Vo91}, Vogan introduced the notion of an associated variety for a Harish-Chandra module $M$ of a real reductive Lie group $G_{\mathbb{R}}$.
Let $M$ be a finitely generated $U(\mathfrak{g})$-module. Fix a finite dimensional generating space $M_0$ of $M$. Let $U_{n}(\mathfrak{g})$ be the standard filtration of $U(\mathfrak{g})$. Set $M_n=U_n(\mathfrak{g})\cdot M_0$ and
	\(
	\text{gr} (M)=\bigoplus_{n=0}^{\infty} \text{gr}_n M,
	\)
	where $\text{gr}_n M=M_n/{M_{n-1}}$. Thus $\text{gr}(M)$ is a graded module of $\text{gr}(U(\mathfrak{g}))\simeq S(\mathfrak{g})$.

The  \textit{associated variety} of $M$ is defined by
		\begin{equation*}
		\AV(M):=\{X\in \mathfrak{g}^* \mid f(X)=0 \text{ for all~} f\in \operatorname{Ann}_{S(\mathfrak{g})}(\operatorname{gr} M)\}.
		\end{equation*}	
Note that this  definition is independent of the choice of $M_0$ (see~\cite{NOT}, for example). 
	If $M_0$ is $\mathfrak{a}$-invariant for a subalgebra $\mathfrak{a}\subset\mathfrak{g}$, then 
	\begin{equation}\label{embed}
	\AV(M)\subset (\mathfrak{g}/\mathfrak{a})^*.
	\end{equation}
In general, the associated
variety $\AV(M)$ is a finite union of the closure of nilpotent $K$-orbits. 
From \cite{NOT}, we know that the closures of the $K$-orbits in $\mathfrak{p}^+$ form a linear chain of  varieties
	\begin{equation*}\label{chain}
	\{0\}={\overline{\mathcal{O}_0}}\subset \overline{\mathcal{O}_1}\subset \cdots \subset\overline{\mathcal{O}_{r-1}}\subset \overline{\mathcal{O}_{r}}=\mathfrak{p}^+,
	\end{equation*}
	where $r$ is the split rank of $\mathfrak{g}_{\mathbb{R}}$ or equivalently $\mathbb{R}$-rank of  $G_{\mathbb{R}}$, i.e., the dimension of  a Cartan subgroup of the  group $G_{\mathbb{R}}$, which is also equal to the rank of the symmetric space $G_{\mathbb{R}}/K_{\mathbb{R}}$.

	When $M=L(\lambda)$ is a highest weight Harish-Chandra module, we can choose $L_0$ to be the finite dimensional $U(\mathfrak{k})$-module generated by $\mathbb{C}_{\lambda}$. Then $L_0$ is $\mathfrak{k}\oplus\mathfrak{p}^+$-invariant. In view of \eqref{embed},
	\[
	\AV(L(\lambda))\subset(\mathfrak{g}/(\mathfrak{k}\oplus\mathfrak{p}^+))^*\simeq(\mathfrak{p}^-)^*\simeq \mathfrak{p}^+,
	\]
	where the last isomorphism is induced from the Killing form. As shown in \cite{Vo91}, the associated variety $\AV(L(\lambda))$ is also $K$-invariant. In fact, Yamashita \cite{Yam-01} proved that $ \AV(L(\lambda)) $ must be one of the orbit closures $ \overline{\mathcal{O}_{k}}$.
	
	\begin{prop}[{\cite[\S 3.2]{Yam-01}}]\label{Ok}
		Let $L(\lambda)$ be a highest weight Harish-Chandra module. Then 
		\begin{equation*}
		\AV(L(\lambda))=\overline{\mathcal{O}_{k(\lambda)}}
		\end{equation*}
		for some $0\leq k(\lambda)\leq r$.
	\end{prop}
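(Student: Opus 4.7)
The plan is to combine three ingredients already assembled in the excerpt: (i) the containment $\AV(L(\lambda)) \subset \mathfrak{p}^+$ coming from the embedding $\mathfrak{p}^+ \simeq (\mathfrak{g}/(\mathfrak{k}\oplus\mathfrak{p}^+))^*$ and the fact that the generating subspace $L_0$ can be chosen $\mathfrak{k}\oplus\mathfrak{p}^+$-invariant; (ii) Vogan's result that $\AV(L(\lambda))$ is a closed, $K$-invariant subvariety of $\mathfrak{p}^+$ consisting of nilpotent elements; and (iii) the linear chain
\[
\{0\}=\overline{\mathcal{O}_0}\subset \overline{\mathcal{O}_1}\subset \cdots \subset \overline{\mathcal{O}_{r-1}}\subset \overline{\mathcal{O}_r}=\mathfrak{p}^+
\]
of $K$-orbit closures on $\mathfrak{p}^+$ in a Hermitian symmetric pair. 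First I would assemble (i) and (ii): the containment is essentially bookkeeping using the standard filtration and the construction of $L_0$ as the $\mathfrak{k}$-module generated by $\mathbb{C}_{\lambda}$ (on which $\mathfrak{p}^+$ acts trivially by highest weight), so that $\operatorname{gr} L(\lambda)$ is annihilated by the ideal generated by $\mathfrak{k}\oplus\mathfrak{p}^+$ inside $S(\mathfrak{g})$; the $K$-invariance and closedness are general features of the associated variety construction.

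The crux is then (iii), the totality of the ordering. Because $\AV(L(\lambda))$ is closed and $K$-stable, it decomposes as a finite union of closures of $K$-orbits in $\mathfrak{p}^+$. Each of these orbit closures is, by the chain structure, one of the $\overline{\mathcal{O}_j}$. Since $\overline{\mathcal{O}_0}\subset \overline{\mathcal{O}_1}\subset \cdots \subset \overline{\mathcal{O}_r}$ is a totally ordered chain under inclusion, any finite union $\overline{\mathcal{O}_{j_1}}\cup \cdots \cup \overline{\mathcal{O}_{j_m}}$ coincides with $\overline{\mathcal{O}_{k}}$, where $k=\max\{j_1,\ldots,j_m\}$. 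This forces $\AV(L(\lambda))=\overline{\mathcal{O}_{k(\lambda)}}$ for a single well-defined integer $0\leq k(\lambda)\leq r$, which is exactly the conclusion.

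The genuinely nontrivial ingredient, and the main obstacle to a self-contained proof, is the classical fact that the $K$-orbits in $\mathfrak{p}^+$ for a Hermitian symmetric pair form a single chain indexed by the split rank $r$. This is a structural theorem going back to Kostant--Rallis and is most cleanly proved using the partial Cayley transforms associated to a maximal system of strongly orthogonal noncompact positive roots $\gamma_1,\ldots,\gamma_r$: one shows that every element of $\mathfrak{p}^+$ is $K$-conjugate to a sum $X_{\gamma_{i_1}}+\cdots+X_{\gamma_{i_k}}$ of root vectors, and that the $K$-orbit depends only on the number $k$. Once this chain property is in hand, the argument above is essentially formal. If one wishes to avoid invoking Kostant--Rallis directly, an alternative is to appeal to the Wolf/Moore classification of $K$-orbits on Hermitian symmetric spaces, or to argue via the Jordan-theoretic description of $\mathfrak{p}^+$ as a simple Jordan triple system, where the rank filtration by "generic rank" again produces precisely this chain.
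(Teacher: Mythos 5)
Your argument is correct and follows exactly the route the paper itself outlines: the paper establishes the containment $\AV(L(\lambda))\subset\mathfrak{p}^+$ via the $\mathfrak{k}\oplus\mathfrak{p}^+$-invariant generating subspace, notes $K$-invariance from Vogan, recalls the linear chain of $K$-orbit closures in $\mathfrak{p}^+$, and then simply cites Yamashita for the conclusion rather than spelling out the final step. Your observation that a finite union of orbit closures from a totally ordered chain equals the largest one is precisely the detail being delegated to the citation, so the proposal is a faithful (and slightly more self-contained) version of the paper's treatment.
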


	Define \begin{equation}\label{z-k}
	   z_{k}:=(\rho,\beta^{\vee})-kc, 
	\end{equation}
 for $0\leq k\leq r$, where $r$ is the $\mathbb{R}$-rank of $G_{\mathbb{R}}$ and   $c$ is the constant associated with the Hermitian type Lie group $G_{\mathbb{R}}$ in Table~\ref{constants-k}.   %(hereafter referred to as EHW\cite{EHW}):
In \cite{BH}, there is  a uniform expression for the GK dimensions and associated varieties of unitary highest weight Harish-Chandra modules.
	
	\begin{prop}[{\cite{BH}}]\label{C: dimYk}
		Suppose $L(\lambda)$ is a unitary highest weight Harish-Chandra module with highest weight $\lambda$. Writing $z:=z(\lambda)=(\lambda+\rho,\beta^{\vee})$, we have
		\begin{align*}\gk L(\lambda)=
		\begin{cases}
		rz_{r-1} & \textup{if  $z<z_{r-1}=(\rho,\beta^{\vee})-(r-1)c$},\\
		\ell z_{\ell -1} & \textup{if  $z=z_{\ell}=(\rho,\beta^{\vee})-\ell c,$ for $1\leq \ell \leq r-1$},\\
		0 & \textup{if $z=z_{0}=(\rho,\beta^{\vee})$}.
		\end{cases}
		\end{align*}
		
		Define $k=k(\lambda):=-\frac{(\lambda,  \beta^{\vee} )}{c}$.
        Then
		\begin{enumerate}
			\item  If $k>r-1$, we have $\gk L(\lambda)=rz_{r-1}=\frac{1}{2}\dim(G/K).$
			\item If $0\leq k\leq r-1$, then $k$ is a nonnegative integer and $$\gk L(\lambda)=k((\rho, \beta^{\vee})-(k-1)c)=kz_{k-1}=\dim \overline{\mathcal{O}_{k(\lambda)}}.$$
		\end{enumerate}
		The associated variety of $L(\lambda)$ is $\overline{\mathcal{O}_{k(\lambda)}}$.
	\end{prop}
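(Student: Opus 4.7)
The plan is to combine three ingredients already available in the excerpt: Yamashita's description of $\AV(L(\lambda))$ as one of the orbit closures $\overline{\mathcal{O}_{k(\lambda)}}$ (Proposition \ref{Ok}); the Enright--Howe--Wallach classification of the unitary set pictured in Figure \ref{zk}; and a direct computation of $\dim \overline{\mathcal{O}_k}$ via Harish-Chandra's strongly orthogonal noncompact roots. Since the GK dimension equals the dimension of the associated variety, the problem splits into (a) computing $\dim \overline{\mathcal{O}_k}$ as a function of $k$, and (b) identifying $k(\lambda)$ from the numerical invariant $z(\lambda)=(\lambda+\rho,\beta^\vee)$.

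For step (a), I would fix a Harish-Chandra sequence $\gamma_1,\ldots,\gamma_r$ of strongly orthogonal roots in $\Phi(\mathfrak{p}^+)$ and take $X_\ell := X_{\gamma_1}+\cdots+X_{\gamma_\ell}$ as a representative of $\mathcal{O}_\ell$. Then $\dim\mathcal{O}_\ell = \dim K - \dim K^{X_\ell}$, and the isotropy is controlled by the root strings through the $\gamma_i$. Using the definition $z_{k}=(\rho,\beta^\vee)-kc$ together with the numerical data of Table \ref{constants-k}, a uniform counting argument (or a short check type-by-type) yields
\[
\dim \overline{\mathcal{O}_\ell} \;=\; \ell\bigl((\rho,\beta^\vee)-(\ell-1)c\bigr) \;=\; \ell z_{\ell-1},
\]
and for $\ell = r$ this matches $\dim \mathfrak{p}^+ = \tfrac{1}{2}\dim(G/K)$.

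For step (b), I would read off the Enright--Howe--Wallach picture: a unitary $L(\lambda)$ either has $z(\lambda)\leq z_{r-1}$, in which case $L(\lambda)=N(\lambda)$ is the full generalized Verma module and $\AV(L(\lambda))=\mathfrak{p}^+=\overline{\mathcal{O}_r}$, so $k(\lambda)=r$; or else $z(\lambda)=z_\ell$ for some $0\leq \ell\leq r-1$. Solving $z_\ell = (\rho,\beta^\vee)-\ell c$ for $\ell$ yields $\ell = -(\lambda,\beta^\vee)/c$, which is the claimed integer $k=k(\lambda)$. It remains to prove that the associated variety at the reduction point $z=z_\ell$ is exactly $\overline{\mathcal{O}_\ell}$. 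The bound $\AV(L(\lambda))\subseteq \overline{\mathcal{O}_\ell}$ follows by exhibiting an explicit composition series/BGG-type resolution (available at the reduction points, e.g. through the Enright--Parthasarathy--Wallach--Wolf ladder), which forces a nontrivial $U(\mathfrak{p}^-)$-annihilator whose zero locus is contained in $\overline{\mathcal{O}_\ell}$; the reverse inclusion $\AV(L(\lambda))\supseteq \overline{\mathcal{O}_\ell}$ comes from Yamashita's theorem (Proposition \ref{Ok}), which says that the associated variety is already one of the $\overline{\mathcal{O}_k}$, combined with a dimension or character estimate that rules out smaller $k$.

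The main obstacle will be step (b), specifically pinning down which orbit closure appears at each reduction point $z_\ell$. A uniform argument seems to require a lemma of the form: at $z=z_\ell$, the Joseph ideal (or the appropriate generalization to $\mathfrak{p}^-$) cuts out exactly $\overline{\mathcal{O}_\ell}$. In practice I expect this to be checked case-by-case using the seven rows of Table \ref{hermitian} and Table \ref{constants-k}, with the $\mathfrak{su}(p,q)$ case given by determinantal varieties, $\mathfrak{sp}(n,\mathbb{R})$ and $\mathfrak{so}^*(2n)$ by symmetric/antisymmetric determinantal varieties, and the two exceptional cases by explicit $K$-orbit descriptions. The dimension calculation in step (a) is routine but must be carried out uniformly so that the same formula $\ell z_{\ell-1}$ emerges in every type.
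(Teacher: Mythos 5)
First, a point of reference: the paper does not prove Proposition \ref{C: dimYk} at all --- it is imported verbatim from \cite{BH} with no internal argument --- so there is nothing in the paper to compare your sketch against, and it must be judged on its own terms. Your overall shape is reasonable, and step (a) is sound: the identity $\dim\overline{\mathcal{O}_\ell}=\ell\bigl((\rho,\beta^{\vee})-(\ell-1)c\bigr)$ does check out type by type (determinantal varieties for $\su(p,q)$, symmetric and antisymmetric determinantal varieties for $\sp(n,\mathbb{R})$ and $\so^*(2n)$, and direct computation in the remaining cases), and $\gk L(\lambda)=\dim\AV(L(\lambda))$ is standard.

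The genuine gap is in step (b), in two places. First, the claim that $z(\lambda)\leq z_{r-1}$ forces $L(\lambda)=N(\lambda)$ is false: the first reduction point $a(\lambda_0)$ depends on $\lambda_0$ and is in general strictly below $z_{r-1}$. For $\su(p,q)$ with $\lambda_0$ regular one has $\rQ(\lambda_0)=\rR(\lambda_0)\simeq\su(1,1)$ and $a(\lambda_0)=b(\lambda_0)=1$, while $z_{r-1}=p+q-r=\max\{p,q\}$; at $z=1$ the module is a unitary \emph{proper} quotient of $N(\lambda)$ with $z<z_{r-1}$, and the equality $\AV(L(\lambda))=\mathfrak{p}^+$ there is true but cannot be deduced from the generalized-Verma observation. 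Second, and more seriously, the dichotomy ``either $z<z_{r-1}$ or $z=z_\ell$ for some $0\leq\ell\leq r-1$'' is precisely the nontrivial content of the proposition and cannot simply be ``read off'' Figure \ref{zk}: the discrete unitary points sit at $a(\lambda_0)+jc$ up to $b(\lambda_0)$, both depending on $\lambda_0$, so one must show that every such point exceeding $z_{r-1}$ is congruent to $z_0=(\rho,\beta^{\vee})$ modulo $c$ (via Theorem \ref{last-unitary} and the structure of $\rQ(\lambda_0)$, $\rR(\lambda_0)$), and then that the associated variety at $z=z_\ell$ is exactly $\overline{\mathcal{O}_\ell}$ rather than a smaller orbit closure. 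Your proposed lower bound --- ``Yamashita's theorem combined with a dimension or character estimate'' --- is circular here: Proposition \ref{Ok} only asserts that $\AV(L(\lambda))$ is \emph{some} $\overline{\mathcal{O}_k}$, and determining which $k$ occurs is exactly the GK-dimension computation you are trying to establish. The missing ingredient, which \cite{BH} supplies by citing the known formulas for $\gk L(\lambda)$ at the unitary reduction points (Joseph's work and its case-by-case successors), is not replaced by anything in your outline; without it, both the dichotomy and the identification $k(\lambda)=-(\lambda,\beta^{\vee})/c$ remain unproved.
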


\section{Uniform proof of the bijection for simply laced types}\label{uniform-proof}
In this section, we will give a uniform proof for the unitarity of Kostant modules corresponding to  connected Dynkin subdiagrams that
contain the noncompact simple root. This will lead to the bijection in Theorem \ref{thm:bij} for all simply laced cases.

Let \( \varIota := \Pi \setminus \{ \alpha \} \), where \( \alpha \) is the noncompact  simple root. Let \( \Gamma \) be the corresponding Dynkin diagram of the root system $\Phi$ with the noncompact root $\alpha$.
Suppose $\Phi' \subseteq \Phi$ is a root subsystem corresponding to a connected Dynkin subdiagram $\Gamma'$ that contains the noncompact simple root. Then $\Phi' \cap \Phi(\mathfrak{p}^+)$ is a lower order ideal in $\Phi(\mathfrak{p}^+)$, and hence
\begin{equation*}
\Phi' \cap \Phi(\mathfrak{p}^+) = \Phi_x
\end{equation*}
for some $x \in {}^\mathfrak{k}W$, by Lemma~\ref{L:ideal}. Define
\begin{equation*}
\lambda_{\Phi'} := -w_c x \rho - \rho,
\end{equation*}
where $w_c$ denotes the longest element of the compact Weyl group $W(\mathfrak{k})$.
Recall that the dual Coxeter number of a root system $\Phi$ is $h^{\vee}:=(\rho,\beta^{\vee})+1$, which we display in Table~\ref{constants-k}.
\begin{lemma}\label{coxe}
For every $\Phi' \subseteq \Phi$ as above, we have
\begin{equation*}
(\lambda_{\Phi'} + \rho, \beta^\vee) = h'^\vee - 1,
\end{equation*}
where $h'^\vee$ is the dual Coxeter number of the root system $\Phi'$.
\end{lemma}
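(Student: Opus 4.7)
The plan is to reduce the identity to an intrinsic statement about the sub-root-system $\Phi'$ itself. The key geometric input is that the longest element $w_c$ of $W(\mathfrak{k})$ sends the highest root $\beta$ to the noncompact simple root $\alpha$; equivalently, $\alpha$ is the lowest $\mathfrak{k}$-weight of the irreducible $\mathfrak{k}$-module $\mathfrak{p}^+$, whose highest weight is $\beta$. In particular, $\alpha$ is a long root. Since $w_c$ is an isometry with $w_c = w_c^{-1}$, I can rewrite
\[
(\lambda_{\Phi'} + \rho, \beta^\vee) = (-w_c x \rho, \beta^\vee) = (-x\rho, \alpha^\vee),
\]
and then applying Lemma~\ref{L:rho} ($x\rho = \rho - \langle\Phi_x\rangle$) and the standard equality $(\rho, \alpha^\vee) = 1$ reduces this to $-1 + (\langle \Phi_x\rangle, \alpha^\vee)$.

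Because $\Gamma'$ is a connected subdiagram containing $\alpha$, the root subsystem $\Phi'$ is itself of Hermitian type with noncompact simple root $\alpha$, and $\Phi_x = \Phi' \cap \Phi(\mathfrak{p}^+)$ coincides with its set of positive noncompact roots, which I denote $\Phi'(\mathfrak{p}'^+)$. Writing $\rho'_n := \tfrac{1}{2}\langle\Phi'(\mathfrak{p}'^+)\rangle$, the identity $(\lambda_{\Phi'} + \rho, \beta^\vee) = {h'}^\vee - 1$ becomes the intrinsic claim
\[
2(\rho'_n, \alpha^\vee) = {h'}^\vee.
\]
Since this only involves data internal to $\Phi'$, it suffices to prove the general identity $2(\rho_n, \alpha^\vee) = h^\vee$ for any Hermitian-type root system with noncompact simple root $\alpha$ and highest root $\beta$.

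To establish the general identity, I combine three properties of $w_c$: (a) $w_c \rho_n = \rho_n$, since $W(\mathfrak{k})$ permutes $\Phi(\mathfrak{p}^+)$; (b) $w_c\rho_c = -\rho_c$, where $\rho_c := \tfrac{1}{2}\langle\Phi^+(\mathfrak{k})\rangle$; and (c) $w_c\beta = \alpha$ (hence also $w_c \alpha = \beta$). Properties (a) and (c) give $(\rho_n,\alpha^\vee) = (\rho_n,\beta^\vee)$. Since $\alpha$ and $\beta$ have equal length, the vector $\alpha^\vee + \beta^\vee$ is $w_c$-invariant, so (b) yields $(\rho_c,\alpha^\vee) = -(\rho_c,\beta^\vee)$. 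Combining with $(\rho,\alpha^\vee) = 1$ gives $(\rho_n,\beta^\vee) - (\rho_c,\beta^\vee) = 1$, and therefore
\[
2(\rho_n,\alpha^\vee) = 2(\rho_n,\beta^\vee) = (\rho_n,\beta^\vee) + (\rho_c,\beta^\vee) + 1 = (\rho,\beta^\vee) + 1 = h^\vee.
\]
The only real subtlety is the verification of (c), which I expect to be the main point requiring care; everything else is a short Weyl-group computation.
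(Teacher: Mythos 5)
Your proof is correct, but it takes a genuinely different route from the paper's. The paper rewrites $\lambda_{\Phi'}=-\langle\Phi_{\tilde x}\rangle$ via the involution $\tilde x=w_cxw_0$ and the complement formula $\Phi_{\tilde x}=\Phi(\mathfrak{p}^+)\setminus w_c\Phi_x$ from Lemma~\ref{L:involution}, then invokes Panyushev's theorem $\langle\Phi(\mathfrak{p}^+)\rangle=h^\vee\zeta$ (for both $\Phi$ and $\Phi'$) to obtain the closed form $\lambda_{\Phi'}=-h^\vee\zeta+h'^\vee w_c\zeta'$, and finally pairs with $\beta^\vee$. You instead move $w_c$ across the form to trade $\beta^\vee$ for $\alpha^\vee$, apply Kostant's formula $x\rho=\rho-\langle\Phi_x\rangle$ directly to $x$, and reduce everything to the intrinsic identity $2(\rho_n,\alpha^\vee)=h^\vee$ for a Hermitian-type pair, which you then prove by an elementary $w_c$-symmetry argument from $w_c\rho_n=\rho_n$, $w_c\rho_c=-\rho_c$, and $w_c\beta=\alpha$ (the last fact is also the key input in the paper's proof). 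In effect you re-derive exactly the $\alpha^\vee$-component of Panyushev's identity that is needed, since $(\zeta,\alpha^\vee)=(\zeta,\beta^\vee)=1$; this makes your argument shorter and self-contained, avoiding both the EHP complement lemma and the citation of Panyushev. What the paper's longer route buys is the explicit expression \eqref{eq-lambda} for $\lambda_{\Phi'}$ itself, which is reused later (in the proofs of Proposition~\ref{unitarity} and Proposition~\ref{card}), whereas your computation only evaluates the single pairing. Two small points you should make explicit: first, the claim that $(\Phi',\alpha)$ is again of Hermitian type needs the (one-line) observation that the coefficient of $\alpha$ in any root of $\Phi'$ equals its coefficient in $\Phi$, hence is at most $1$, and equals $1$ in the highest root of $\Phi'$ because $\Gamma'$ is connected; second, when you apply the general identity to $\Phi'$, the longest element involved is that of the compact Weyl group of $\Phi'$, not $w_c$ itself --- your phrasing of the identity as intrinsic handles this correctly, but the reader should be told.
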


\begin{proof}
Fix $\Phi' \subset \Phi$ as above, and let $x \in {}^\mathfrak{k}W$ such that $\Phi' \cap \Phi(\mathfrak{p}^+) = \Phi_x$.
Let
\begin{equation*}
\tilde{x} := w_c x w_0,
\end{equation*}
where $w_0$ is the longest element in the full Weyl group $W$. Since $w_0 \rho = -\rho$, it follows that $\lambda_{\Phi'} = \tilde{x} \rho - \rho$, and hence by Lemma \ref{L:rho}, we have
\begin{equation}\label{rootsum}
\lambda_{\Phi'} = -\langle \Phi_{\tilde{x}} \rangle.
\end{equation}
% Here and throughout, for any subset $\Psi \subseteq \Phi$, we write $\langle \Psi \rangle$ to denote the sum $\sum_{\psi \in \Psi} \psi$.
By Enright--Hunziker--Pruett \cite[Cor.~3.15]{EHP} or Lemma \ref{L:involution}, we have
\begin{equation*}
\Phi_{\tilde{x}} = \Phi(\mathfrak{p}^+) \setminus w_c \Phi_x,
\end{equation*}
and hence
\begin{equation}\label{eq: lambda decomp}
\lambda_{\Phi'} = -\langle \Phi(\mathfrak{p}^+) \rangle + w_c \langle \Phi_x \rangle.
\end{equation}

By Panyushev \cite[Thm.~2.2]{Pan20}, 
\begin{equation*}
\langle \Phi(\mathfrak{p}^+) \rangle = h^\vee \zeta,
\end{equation*}
where $h^\vee$ is the dual Coxeter number of the root system $\Phi$. Similarly, since $\Phi' \cap \Phi(\mathfrak{p}^+) = \Phi_x$,  
\begin{equation*}
\langle \Phi_x \rangle = h'^\vee \zeta',
\end{equation*}
where $h'^\vee$ is the dual Coxeter number of the root system $\Phi'$, and 
$\zeta'$ is the fundamental weight of $\Phi'$ corresponding to the noncompact simple root, viewed as an element of $\mathfrak{h}^*$.
(\textit{Warning: $\zeta'$ is, in general, \emph{not} a fundamental weight for the full root system $\Phi$.})
Thus, by \eqref{eq: lambda decomp}, we can write
\begin{equation}\label{eq-lambda}
\lambda_{\Phi'} = -h^\vee \zeta + h'^\vee w_c \zeta'.
\end{equation}
Now recall that $w_c \beta = \alpha$, since $\mathfrak{p}^+$ is a highest weight module of $\mathfrak{k}$ with highest weight $\beta$ and lowest weight $\alpha$. Therefore,
\begin{equation}\label{zeprime}
(w_c \zeta', \beta^\vee) = (\zeta', w_c \beta^\vee) = (\zeta', \alpha^\vee) = 1.
\end{equation}
Since $(\zeta, \beta^\vee) = 1$ and $(\rho, \beta^\vee) = h^\vee - 1$, it follows that
\begin{align*}
(\lambda_{\Phi'} + \rho, \beta^\vee) &= (-h^\vee \zeta + h'^\vee w_c \zeta' + \rho, \beta^\vee) \\
&= -h^\vee (\zeta, \beta^\vee) + h'^\vee (w_c \zeta', \beta^\vee) + (\rho, \beta^\vee) \\
&= -h^\vee + h'^\vee + (h^\vee - 1) \\
&= h'^\vee - 1,
\end{align*}
as claimed.
\end{proof}

We define $w(\Gamma'):=w(\Phi')=\tilde{x}=w_cxw_0$. Thus $w_cx=w(\Gamma')w_0$ and $\lambda_{\Phi'}=-w(\Gamma')w_0\rho-\rho=w(\Gamma')\rho-\rho$.

\begin{prop}\label{unitarity}
For every $\Phi' \subseteq \Phi$ as above, the simple module $L({\lambda_{\Phi'}})$ is unitarizable.
\end{prop}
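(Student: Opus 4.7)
The plan is to combine Lemma~\ref{coxe} with Theorem~\ref{last-unitary} and the Enright--Howe--Wallach classification (Figure~\ref{zk}) in order to locate $z(\lambda_{\Phi'}) = (\lambda_{\Phi'}+\rho,\beta^\vee)$ inside $Z(\lambda_0)$. By Lemma~\ref{coxe} we already have $z = h'^\vee - 1$. Writing $\lambda_{\Phi'} = \lambda_0 + z\zeta$ with $(\lambda_0+\rho,\beta^\vee) = 0$ and using $\zeta\perp\Phi(\mathfrak{k})$, equation \eqref{eq-lambda} gives the explicit formula
\[
\lambda_0 \;=\; -(h^\vee + h'^\vee - 1)\zeta + h'^\vee\, w_c\zeta'.
\]
The strategy splits into two cases according to whether $\Phi'$ is empty or not. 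In the nonempty case I aim to show that $Q(\lambda_0) = w_c(\Phi')$, so that (since $w_c$ is an isometry) $h^\vee_{Q(\lambda_0)} = h'^\vee$; because $\Phi$ is simply laced we have $R(\lambda_0) = Q(\lambda_0)$, and Theorem~\ref{last-unitary} then yields $b(\lambda_0) = h'^\vee - 1 = z(\lambda_{\Phi'})$, placing $z$ at the last unitary reduction point of $Z(\lambda_0)$.

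To prove $Q(\lambda_0) = w_c(\Phi')$ when $\Phi'\neq\emptyset$, I would first establish the inclusion $w_c(\Phi')\subseteq Q(\lambda_0)$ by a direct pairing computation: for $\gamma\in\Pi(\Phi')\setminus\{\alpha\}$,
\[
(\lambda_0, w_c\gamma) \;=\; h'^\vee (w_c\zeta', w_c\gamma) \;=\; h'^\vee(\zeta',\gamma) \;=\; 0,
\]
using $(\zeta,w_c\gamma) = 0$ (since $w_c\gamma\in\Phi(\mathfrak{k})$) and the fundamental weight property of $\zeta'$ in the simply laced subsystem $\Phi'$. Since $w_c\alpha = \beta$, the irreducible system $w_c(\Phi')$ is generated by $\beta$ together with roots from $\Phi_c(\lambda_0)$, hence lies in the $-\beta$-component of $\Psi_1$. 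The reverse inclusion is the main obstacle: I must show that every $\gamma\in\Phi_c(\lambda_0)\setminus w_c(\Phi')$ is orthogonal to $w_c(\Phi')$. Setting $\delta := w_c\gamma\in\Phi(\mathfrak{k})$, the condition $(\zeta',\delta)=0$ forces the simple-root coefficients of $\delta$ to vanish on the ``border'' $\partial\Pi(\Phi') := \{\alpha_i\in\Pi\setminus\Pi(\Phi'):\alpha_i\text{ is adjacent to }\Pi(\Phi')\}$, because $(\zeta',\alpha_i)<0$ for $\alpha_i\in\partial\Pi(\Phi')$ (simply laced) while $(\zeta',\alpha_j)=0$ for all other $\alpha_j\in\Pi(\mathfrak{k})$. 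Together with $c_\alpha = 0$, this means the Dynkin support of $\delta$ is disconnected from $\Pi(\Phi')$; since $\delta\notin\Phi'$, its support lies in $\Pi\setminus(\Pi(\Phi')\cup\partial\Pi(\Phi'))$, giving $\delta\perp V' = \operatorname{span}(\Phi')$ and hence $\gamma\perp w_c(V')$. Therefore such $\gamma$ contributes to a separate irreducible component of $\Psi_1$, proving $Q(\lambda_0) = w_c(\Phi')$.

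For $\Phi' = \emptyset$, the formulas collapse to $\lambda_\emptyset = -h^\vee\zeta$ and $z = -1$, with $\lambda_0 = -(h^\vee - 1)\zeta$. Then $\Phi_c(\lambda_0) = \Phi(\mathfrak{k})$, so $\Psi_1$ generates all of $\Phi$ and $Q(\lambda_0) = \Phi$, giving $b(\lambda_0) = h^\vee - 1$ by Theorem~\ref{last-unitary}. In this scalar case the first reduction point is $a(\lambda_0) = z_{r-1} = (\rho,\beta^\vee) - (r-1)c$ from the Wallach ladder (cf.\ Proposition~\ref{C: dimYk}); the inequality $-1 \leq h^\vee - 1 - (r-1)c$ reduces to $(r-1)c \leq h^\vee$, which is verified directly from Table~\ref{constants-k} in every simply laced Hermitian type. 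Hence $z = -1$ lies in the continuous ray $(-\infty,a(\lambda_0)]\subseteq Z(\lambda_0)$, and $L(\lambda_\emptyset)$ is unitary. Combining the two cases gives the proposition. The delicate step is the disconnection argument above; everything else is packaged in Lemma~\ref{coxe} and Theorem~\ref{last-unitary}.
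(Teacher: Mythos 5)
Your overall strategy coincides with the paper's: use Lemma~\ref{coxe} to place $z(\lambda_{\Phi'})$ at $h'^\vee-1$, identify $\rQ(\lambda_{\Phi'})=\rR(\lambda_{\Phi'})=w_c\Phi'$, and conclude via Theorem~\ref{last-unitary} that $z$ equals the last unitary reduction point $b(\lambda_0)$. Your verification that $\rQ(\lambda_0)=w_c\Phi'$ --- the pairing computation for the inclusion and the support/disconnection argument for the reverse inclusion --- is sound, and is in fact spelled out somewhat more carefully than the paper's version, which only inspects the simple compact roots adjacent to the subdiagram of $w_c\Phi'$ inside the extended Dynkin diagram. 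Your treatment of $\Phi'=\varnothing$ is a correct bonus rather than a required case: the hypothesis is that $\Gamma'$ contains the noncompact simple root, so $\Phi'\neq\varnothing$ here, and the empty diagram (corresponding to $L(-2\rho_n)$) is dealt with separately in the paper.

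The genuine gap is the non-simply-laced types. Although this proposition sits in the section advertised as treating simply laced types, the paper states and proves it for every Hermitian symmetric pair, and it is needed in that generality: in \S\ref{case-proof} the bijection for $(\mathsf{B}_n,\mathsf{B}_{n-1})$ and $(\mathsf{C}_n,\mathsf{A}_{n-1})$ requires knowing that the modules attached to connected subdiagrams of $\Gamma$ \emph{are} unitary, and only the extra ``rationally smooth but not smooth'' Kostant modules are shown there to be non-unitary. Your step ``because $\Phi$ is simply laced we have $\rR(\lambda_0)=\rQ(\lambda_0)$'' therefore leaves $\sp(n,\R)$ and $\so(2,2n-1)$ uncovered. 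In those cases $\rR(\lambda_0)$ can a priori be strictly larger than $\rQ(\lambda_0)$, since it is enlarged by short compact roots $\alpha'$ not orthogonal to $\rQ(\lambda_0)$ with $(\lambda_0,\alpha'^\vee)=1$; if $\rR\supsetneq\rQ$, then $b(\lambda_0)=h^\vee_{\rQ(\lambda_0)}-1+\tfrac{1}{2}\bigl(r_{\rR(\lambda_0)}-r_{\rQ(\lambda_0)}\bigr)$ would exceed $h'^\vee-1$ and your identification $z=b(\lambda_0)$ would break down. The paper closes this by checking, case by case, that the relevant adjacent short roots pair with $\lambda_{\Phi'}$ to a value strictly greater than $1$ (for $\sp(n,\R)$ using that the coefficients $n_i$ in $\langle\Phi'\cap\Phi(\fp^+)\rangle=\sum_i n_i\alpha_i$ all exceed $1$ when $\Phi'\subsetneq\Phi$, and for $\so(2,2n-1)$ treating $\Phi'\simeq\su(1,n-1)$ separately via $(\alpha_{n-1},\alpha_n^\vee)=-2$). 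You would need to add this verification to make the proof complete for the full scope of the proposition.
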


\begin{proof} Fix $\Phi' \subseteq \Phi$ as above, and let $\rQ(\lambda_{\Phi'})$ and $\rR(\lambda_{\Phi'})$ be the two root subsystems of $\Phi$ associated with the line $\lambda_{\Phi'} + \mathbb{R} \zeta$, as in \S \ref{QR}. We claim that
\begin{equation} \label{eq:Q_equal}
\rQ(\lambda_{\Phi'}) = \rR(\lambda_{\Phi'}) = w_c \Phi'.
\end{equation}

To prove the claim, we must consider more closely the weight $\zeta' \in \mathfrak{h}^*$ introduced in the proof of the previous Lemma \ref{coxe}. Let $\{\alpha_1, \ldots, \alpha_n\}$ be the simple roots of $\Phi$, ordered according to Bourbaki \cite{Bour}, and set
\begin{equation*}
I = \{i : \alpha_i \in \Phi'\}.
\end{equation*}
By the definition of $\zeta'$, for all $i \in I$ we have
\begin{equation*}
(\zeta', \alpha_i^\vee) =
\begin{cases}
0 & \text{if $\alpha_i$ is compact,} \\
1 & \text{if $\alpha_i$ is noncompact.}
\end{cases}
\end{equation*}
Furthermore, by \cite[Thm.~2.2]{Pan20}, we have
\begin{equation*}
h' \zeta' = \left\langle \Phi' \cap \Phi(\mathfrak{p}^+) \right\rangle = \sum_{i \in I} n_i \alpha_i, \quad \text{with } n_i > 0 \text{ for all } i \in I.
\end{equation*}

Suppose $j \notin I$ is such that $\alpha_j$ is adjacent to the Dynkin diagram of $\Phi'$. (Then $\alpha_j$ is necessarily compact.) Since the Dynkin diagram of $\Phi'$ is connected (and the Dynkin diagram of $\Phi$ is a tree), there exists a unique $i_0 \in I$ such that
$(\alpha_{i_0}, \alpha_j^\vee) \ne 0$. In  fact, $(\alpha_{i_0}, \alpha_j^\vee) < 0$ and it  follows that
\begin{equation}\label{eq:negative}
(h' \zeta', \alpha_j^\vee) = (\alpha_{i_0}, \alpha_j^\vee) n_{i_0} < 0.
\end{equation}

Recall that $\lambda_{\Phi'} = -h^\vee \zeta + h'^\vee w_c \zeta'$ by (\ref{eq-lambda}).
Since $\zeta$ is orthogonal to all compact simple roots, it follows from this expression that every compact simple root $\alpha_i$ with  $i\in I$ lies in $w_c \Phi'$. (Indeed, each compact simple root is of the form $-w_c \alpha_i$ for some $i \in I$ with $\alpha_i$ compact.)
By the definition of the root subsystem $\rQ(\lambda_{\Phi'})$, we then have $w_c \Phi' \subseteq \rQ(\lambda_{\Phi'})$.
Note that the set $\{-w_c \alpha_i : i \in I\}$ forms a system of simple roots for $w_c \Phi'$, and its Dynkin diagram can be identified with a connected subdiagram of the extended Dynkin diagram of $\Phi$ containing the node corresponding to $-\beta= -w_c \alpha$, where $\alpha$ is the simple noncompact root of $\Phi$ (and also of $\Phi'$). A simple compact root not in $w_c \Phi'$ is adjacent to this Dynkin subdiagram if and only if it is of the form $-w_c \alpha_j$, where $\alpha_j$ is a simple compact root adjacent to the Dynkin diagram of $\Phi'$. In that case, by \eqref{eq:negative} we have
\begin{equation*}
(\lambda_{\Phi'}, -w_c \alpha_j) > 0.
\end{equation*}
Thus, by the definition of $\rQ(\lambda_{\Phi'})$, we conclude that
\begin{equation*}
w_c \Phi' = \rQ(\lambda_{\Phi'}).
\end{equation*}
If $\Phi$ is simply laced, then $\rQ(\lambda_{\Phi'}) = \rR(\lambda_{\Phi'})$, and the claim~\eqref{eq:Q_equal} follows. Also, if $\Phi' = \Phi$, then we  trivially have $\rQ(\lambda_{\Phi'}) = \rR(\lambda_{\Phi'})$.
So suppose $\Phi$ is not simply laced and $\Phi' \subsetneq \Phi$. We must consider two cases: $\Sp(2n, \mathbb{R})$ and $\SO(2,2n-1)$.

First we consider the case $\Sp(2n, \mathbb{R})$. In this case, for every $\Phi' \subsetneq \Phi$, we have $n_i > 1$ for all $i \in I$ such that $\alpha_i$ is compact. Thus, the argument above shows that
\begin{equation*}
(\lambda_{\Phi'}, -w_c \alpha_j) > 1.
\end{equation*}
Hence, by the definition of $\rR(\lambda_{\Phi'})$, it follows that $\rQ(\lambda_{\Phi'}) = \rR(\lambda_{\Phi'})$.

Next we consider the case $\SO(2,2n-1)$. Suppose first that $\Phi'$ is of type $\mathfrak{su}(1,n')$ with $n' \leq n-2$. Then the (unique) simple compact root adjacent to the Dynkin diagram of $\Phi'$ is long. By the definition of $\rR(\lambda_{\Phi'})$, it follows that $\rQ(\lambda_{\Phi'}) = \rR(\lambda_{\Phi'})$ in this case.

Now suppose $\Phi'$ is of type $\mathfrak{su}(1,n-1)$. Then the (unique) simple compact root adjacent to the Dynkin diagram of $\Phi'$ is the short simple root $\alpha_j = \alpha_n$, which is adjacent to the long simple root $\alpha_{i_0} = \alpha_{n-1}$. Since $(\alpha_{i_0}, \alpha_j^\vee) = -2$, we find that $(\lambda_{\Phi'}, -w_c \alpha_j) > 1$. By the definition of $\rR(\lambda_{\Phi'})$, it follows that $\rQ(\lambda_{\Phi'}) = \rR(\lambda_{\Phi'})$ in this case as well.

Thus, the claim \eqref{eq:Q_equal} holds in all cases. As in \S\ref{QR}, we can write $\lambda_{\Phi'}=\lambda_0+z\zeta$, with $\lambda_{0} \in \mathfrak{h}^{*} $ such that ($\lambda_{0}  + \rho, \beta^\vee  $)=0, and $z=(\lambda_{\Phi'}+\rho,\beta^\vee) \in \mathbb{R}$. By  Lemma \ref{coxe} and \eqref{eq:Q_equal}, we then have
\begin{equation*}
z=(\lambda_{\Phi'} + \rho, \beta^\vee) = h'^\vee - 1 = h_{\rQ(\lambda_{\Phi'})}^\vee - 1 + \frac{r_{\rR(\lambda_{\Phi'})} - r_{\rQ(\lambda_{\Phi'})}}{2}=b(\lambda_0).
\end{equation*}
By Theorem \ref{last-unitary}, it follows that $L(\lambda_{\Phi'})$ is unitarizable. (More precisely, $\lambda_{\Phi'}$ is the ``last place of unitarity'' on the line $\lambda_{\Phi'} + \mathbb{R} \zeta$.) This completes the proof of the proposition.
\end{proof}

% \begin{cor}
%     For any $w\in {}^\mathfrak{k}W $, $L({\tilde{w}}\rho-\rho)=L(w_cww_0\rho-\rho)=L(-w_cw)=L(-w_cw\rho-\rho)$ is a highest weight Harish-Chandra module.
% \end{cor}

\begin{cor}\label{bij-simply}
    If $G_{\mathbb{R}}$ is a Lie group of Hermitian type whose complexified Lie algebra $\mathfrak{g}$ is simply laced, then we have the bijection given in Theorem \ref{thm:bij}.
\end{cor}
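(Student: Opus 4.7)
The plan is to combine three results already in the paper: Proposition~\ref{unitarity} (just established), Theorem~\ref{sub-Kostant} (Boe--Hunziker's characterization of Kostant modules in simply laced Hermitian blocks), and Theorem~\ref{kostant-module} (every unitary highest weight module is a Kostant module). Since the nontrivial analytic input is already absorbed into Proposition~\ref{unitarity}, the remaining work is essentially bookkeeping: I want to identify both sides of Theorem~\ref{thm:bij} with subsets of the simple objects of the regular block $\mathcal{O}^{\mathfrak{q}}_{-2\rho}$ and then show these subsets coincide.

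First I would parameterize. The simple modules in $\mathcal{O}^{\mathfrak{q}}_{-2\rho}$ are exactly $L_w = L(-w_c w\rho - \rho)$ for $w \in {}^\mathfrak{k} W$, and these are precisely the highest weight Harish-Chandra modules with infinitesimal character $\rho$, since $-w_c w\rho$ lies in the $W$-orbit of $\rho$. On the other side, given a connected subdiagram $\Gamma' \subseteq \Gamma$ containing the noncompact simple root $\alpha$, the intersection $\Phi' \cap \Phi(\mathfrak{p}^+)$ is a lower order ideal which by Lemma~\ref{L:ideal} determines a unique $x \in {}^\mathfrak{k}W$ with $\Phi_x = \Phi' \cap \Phi(\mathfrak{p}^+)$. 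The module associated to $\Gamma'$ is then $L(\lambda_{\Phi'}) = L_x$, which under the simply laced assumption is the Kostant module assigned to $\Gamma'$ by Theorem~\ref{sub-Kostant}.

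Then I would close the loop. Proposition~\ref{unitarity} shows the image of $\Gamma' \mapsto L_x$ lies inside the set of unitary modules in the block, while Theorem~\ref{sub-Kostant} shows this same map is a bijection onto the Kostant modules in the block. Conversely, Theorem~\ref{kostant-module} gives the containment of unitary modules into Kostant modules within the block. Stringing these inclusions together forces the unitary modules, the Kostant modules, and the image of $\Gamma' \mapsto L_x$ to coincide, and the Boe--Hunziker bijection thus restricts to the one asserted in Theorem~\ref{thm:bij}. The only potential pitfall, rather than a genuine obstacle, is checking that the Boe--Hunziker correspondence of Theorem~\ref{sub-Kostant} agrees with $\Gamma' \mapsto L(\lambda_{\Phi'})$, but both attach $\Gamma'$ to the same lower order ideal $\Phi' \cap \Phi(\mathfrak{p}^+)$ and hence to the same $x \in {}^\mathfrak{k}W$, so the identification is tautological.
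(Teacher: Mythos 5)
Your argument is correct, and the first step (the image of $\Gamma'\mapsto L(\lambda_{\Phi'})=L_x$ lands in the unitary modules, which by Theorem~\ref{kostant-module} land in the Kostant modules) is exactly the paper's. Where you diverge is in how you close the sandwich: you invoke Theorem~\ref{sub-Kostant} to identify the Kostant modules in $\mathcal{O}^{\mathfrak{q}}_{-2\rho}$ directly and bijectively with the connected subdiagrams containing $\alpha$, whereas the paper routes through Schubert geometry --- Theorem~\ref{T:BGG} (Kostant module $\Leftrightarrow$ $X_Q(x^{-1})$ rationally smooth), the coincidence of rational smoothness and smoothness in simply laced type, and then Hong's bijection (Proposition~\ref{P:smooth}) between smooth Schubert varieties in $G/Q$ and connected subdiagrams. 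Your route is more economical for the bijection itself, and the one worry you flag (that the Boe--Hunziker correspondence agrees with $\Gamma'\mapsto L_x$) can even be sidestepped by a cardinality argument: your map is injective since $\Gamma'$ is recovered from the support of the highest root in $\Phi'\cap\Phi(\mathfrak{p}^+)=\Phi_x$, so the chain of inclusions plus the equality of cardinalities from Theorem~\ref{sub-Kostant} forces all three sets to coincide. What the paper's longer route buys is Corollary~\ref{simply-schu} (unitarity of $L(-w\rho-\rho)$ is equivalent to smoothness of $X_Q(x^{-1})$) as an immediate byproduct, which is the input needed later for Corollary~\ref{uni-bij}; your proof would need to re-run the Schubert-variety chain separately to obtain that statement.
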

\begin{proof}
  Suppose that $\Gamma'$ is a connected Dynkin subdiagram 
containing the noncompact simple root $\alpha$. Denote the corresponding root subsystem by $\Phi'$.  From Proposition \ref{unitarity}, we know that the highest weight module $L(\lambda_{\Phi'})=L(-w_cx\rho-\rho)=L_x$ is unitarizable.
By Theorem \ref{kostant-module}, we know that $L(\lambda_{\Phi'})=L_x$ is a Kostant module. By Theorem \ref{T:BGG}, the Schubert variety $X_Q(x^{-1})$ is smooth, since rationally smoothness and smoothness are equivalent for all simply laced types. By Proposition \ref{P:smooth}, there is a bijection between the smooth Schubert varieties and connected subdiagrams containing the noncompact simple root $\alpha$. Therefore we obtain the bijection in Theorem \ref{thm:bij}.
\end{proof}

Recall that  $L(-w\rho-\rho)$ is a highest weight Harish-Chandra module if and only if $w=w_cx$ for some $x\in {}^{\fk}W$.
Thus from  the proof of Corollary \ref{bij-simply}, we have the following result.
\begin{cor}\label{simply-schu}
Let $G_{\mathbb{R}}$ be a Lie group of Hermitian type whose complexified Lie algebra $\mathfrak{g}$ is simply laced. Suppose $L(-w\rho-\rho)$ is a highest weight Harish-Chandra module of $G_{\mathbb{R}}$. 
  Then  $L(-w\rho-\rho)$ is  unitarizable  if and only if the Schubert variety $X_Q(x^{-1})$ in $G/Q$ is smooth, where $x=w_cw$, and where $Q$ is the parabolic subgroup of $G$ such that ${\rm Lie}(Q)=\fq=\fk+\fp^+$.
\end{cor}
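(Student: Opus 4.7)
The plan is to string together three ingredients from the excerpt: the bijection of Corollary \ref{bij-simply}, the Kostant-module characterization of (rationally) smooth Schubert varieties given by Theorem \ref{T:BGG}, and the fact that for simply laced $\mathfrak{g}$, rational smoothness coincides with smoothness of Schubert varieties in $G/Q$.

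First, I would use \cite[Prop.~2.3]{BN05} to rewrite the hypothesis that $L(-w\rho-\rho)$ is a highest weight Harish-Chandra module as $w = w_c x$ for some $x \in {}^{\fk}W$; since $w_c^2 = 1$, this gives $x = w_c w$ and $L(-w\rho-\rho) = L_x$ in the notation of \S\ref{pre}.

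For the forward implication, if $L_x$ is unitarizable, then Theorem \ref{kostant-module} forces $L_x$ to be a Kostant module, and Theorem \ref{T:BGG} in turn yields that $X_Q(x^{-1})$ is rationally smooth, hence smooth under the simply laced hypothesis.

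For the reverse implication, I would start with the assumption that $X_Q(x^{-1})$ is smooth. By Proposition \ref{P:smooth}, this singles out a unique connected Dynkin subdiagram $\Gamma'$ containing the noncompact simple root. By Corollary \ref{bij-simply}, the same $\Gamma'$ corresponds under the Theorem \ref{thm:bij} bijection to a unitary module $L_{x'}$ with $\Phi_{x'} = \Phi' \cap \Phi(\mathfrak{p}^+)$. The hard part will be verifying $x = x'$; but this follows from the compatibility of the two bijections through connected subdiagrams, a compatibility already built into the proof of Corollary \ref{bij-simply}: the module $L_{x'}$ is Kostant, so by Theorem \ref{T:BGG} the smooth Schubert variety it is paired with is $X_Q(x'^{-1})$, which must coincide with the smooth Schubert variety $X_Q(x^{-1})$ assigned to $\Gamma'$ by Proposition \ref{P:smooth}. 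Hence $x = x'$, and $L_x = L(-w\rho-\rho)$ is unitarizable.
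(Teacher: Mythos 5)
Your setup and forward implication coincide with the paper's: reduce to $L(-w\rho-\rho)=L_x$ with $x=w_cw\in{}^{\fk}W$ via \cite[Prop.~2.3]{BN05}, then unitary $\Rightarrow$ Kostant (Theorem~\ref{kostant-module}) $\Rightarrow$ $X_Q(x^{-1})$ rationally smooth (Theorem~\ref{T:BGG}) $\Rightarrow$ smooth, by the simply laced hypothesis. That half is fine.

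The reverse implication has a gap at exactly the step you flag as ``the hard part.'' You let $\Gamma'$ be the connected subdiagram that Proposition~\ref{P:smooth} attaches to the smooth variety $X_Q(x^{-1})$, form the unitary module $L_{x'}$ with $\Phi_{x'}=\Phi'\cap\Phi(\fp^+)$, and then assert that the variety $X_Q(x'^{-1})$ attached to $L_{x'}$ by Theorem~\ref{T:BGG} ``must coincide'' with the variety $L_{\mathrm I'}/(L_{\mathrm I'}\cap Q)$ attached to $\Gamma'$ by Proposition~\ref{P:smooth}. Nothing in the excerpt establishes that these two correspondences are compatible in this sense: Theorem~\ref{T:BGG} only tells you that $X_Q(x'^{-1})$ is (rationally) smooth, i.e.\ that it corresponds to \emph{some} connected subdiagram $\Gamma''$ under Proposition~\ref{P:smooth}, not that $\Gamma''=\Gamma'$; and the proof of Corollary~\ref{bij-simply} does not contain this compatibility either --- it closes by comparing cardinalities, not by matching individual subdiagrams to individual Schubert varieties. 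The compatibility is in fact true (one checks that $x'$ is the longest element of ${}^{\fk'}W'$ for the sub-pair attached to $\Gamma'$, so that $X_Q(x'^{-1})$ is the closure of the $Q_{\mathrm I'}$-orbit of $eQ$, which is precisely $L_{\mathrm I'}/(L_{\mathrm I'}\cap Q)$), but that verification is missing and is not supplied by merely citing Proposition~\ref{P:smooth}. You can also avoid the issue entirely, which is what the paper's argument amounts to: the injections $\{\textup{connected subdiagrams}\}\hookrightarrow\{\textup{unitary }L_v\}\hookrightarrow\{\textup{Kostant }L_v\}$ (Proposition~\ref{unitarity} plus Theorem~\ref{kostant-module}), combined with Theorem~\ref{sub-Kostant} identifying the Kostant modules with the connected subdiagrams, force every Kostant module in the block to be unitary; since smoothness of $X_Q(x^{-1})$ gives rational smoothness and hence that $L_x$ is Kostant by Theorem~\ref{T:BGG}, unitarity of $L_x$ follows with no need to identify $x$ with $x'$.
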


\section{Proof of the bijection for non-simply laced types}\label{case-proof}

In this section, we give the proof of the bijection in Theorem \ref{thm:bij} for non-simply laced cases, by using the results about Kostant modules from Boe--Hunziker \cite{BH:09}.

\begin{prop}\label{generalizeddiagram}
     If $G_{\mathbb{R}}$ is a Lie group of Hermitian type whose complexified Lie algebra $\mathfrak{g}$ is non-simply laced, then we have the bijection given in Theorem \ref{thm:bij}.
\end{prop}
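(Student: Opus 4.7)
The plan is to combine Proposition \ref{unitarity} (forward direction, uniform) with a case-by-case pruning of Kostant modules for the reverse direction. Proposition \ref{unitarity} applies in all types, so every connected subdiagram $\Gamma' \subseteq \Gamma$ containing the noncompact simple root $\alpha$ (including $\Gamma' = \varnothing$) already gives a unitary module $L(\lambda_{\Phi'})$. Since distinct subdiagrams produce distinct lower order ideals in $\Phi(\mathfrak{p}^+)$ by Lemma \ref{L:ideal}, the assignment $\Gamma' \mapsto L(\lambda_{\Phi'})$ is injective into the set of unitary highest weight modules of infinitesimal character $\rho$.

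The remaining task is surjectivity. By Theorem \ref{kostant-module}, any unitary $L(-w\rho - \rho)$ is a Kostant module, and by Theorem \ref{BC-Kostan} the Kostant modules in $\mathcal{O}_{-2\rho}^{\mathfrak{q}}$ are parametrized by connected subdiagrams of the simply laced cover $\widehat{\Gamma}$ containing $\widehat{\alpha}$. For $\Gamma = \mathsf{B}_n$ or $\mathsf{C}_n$, the cover $\widehat{\Gamma}$ has strictly more such subdiagrams than $\Gamma$ does, so surjectivity reduces to showing that every ``extra'' Kostant module---one whose $\widehat{\Gamma}$-subdiagram does not match a connected subdiagram of $\Gamma$---fails to be unitary. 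By Remark \ref{rat-not-sm}, these extras are exactly the Kostant modules whose Schubert varieties $X_Q(x^{-1}) \subset G/Q$ are rationally smooth but not smooth. I would then handle $\mathsf{B}_n$ and $\mathsf{C}_n$ separately: for each extra subdiagram, compute the highest weight via $\lambda = -\langle \Phi(\mathfrak{p}^+)\rangle + w_c \langle \Phi_x\rangle$ as in \eqref{eq: lambda decomp}, read off the root subsystems $\rQ(\lambda_0)$ and $\rR(\lambda_0)$, apply Theorem \ref{last-unitary} to obtain the last unitary reduction point $b(\lambda_0)$, and verify the strict inequality $(\lambda + \rho, \beta^\vee) > b(\lambda_0)$, which places $L(\lambda)$ strictly above every unitary point on its $\zeta$-line and therefore forces non-unitarity. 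For $\mathsf{C}_n$, with cover $\widehat{\Gamma} = \mathsf{D}_{n+1}$, the extras are the subdiagrams that engage the ``wrong'' fork tine at the branch node; for $\mathsf{B}_n$, with cover $\widehat{\Gamma} = \mathsf{A}_{2n-1}$, they are the intervals around $\widehat{\alpha}$ that fail the symmetry required to fold back onto $\mathsf{B}_n$.

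The main obstacle will be the numerical verification itself. In the simply laced proof of Proposition \ref{unitarity}, the identity $\rQ(\lambda_{\Phi'}) = \rR(\lambda_{\Phi'}) = w_c \Phi'$ produced $(\lambda_{\Phi'} + \rho, \beta^\vee) = h'^\vee - 1 = b(\lambda_0)$ on the nose. For the extra Kostant modules in $\mathsf{B}_n$ and $\mathsf{C}_n$ this identity must fail, and fail \emph{in the direction that violates the unitarity bound}. The reason will be tied to the long/short root geometry: the simple compact root adjacent to the $\widehat{\Gamma}$-subdiagram no longer sits in a position forcing equality in \eqref{eq:negative}, so either $\rQ(\lambda_0) \neq w_c\Phi'$ or $\rR(\lambda_0) \neq \rQ(\lambda_0)$ in a way that shifts $b(\lambda_0)$ \emph{below} $(\lambda+\rho,\beta^\vee)$. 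Making this quantitative for every extra subdiagram in each type is exactly what prevents a uniform argument and forces the by-hand treatment of Section \ref{case-proof}.
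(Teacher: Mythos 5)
Your proposal follows essentially the same route as the paper's proof: Proposition~\ref{unitarity} (whose proof already treats the non-simply laced cases) gives the forward direction, and the paper then isolates the $n-1$ ``extra'' Kostant modules via Theorem~\ref{BC-Kostan} and Remark~\ref{rat-not-sm}, computes their highest weights from the generalized Young diagrams, and kills unitarity with Theorem~\ref{last-unitary} exactly as you outline. The only point worth adjusting is your closing speculation about the mechanism: in the actual computations one still has $\rQ(\lambda)=\rR(\lambda)$ for every extra module (namely $\mathfrak{su}(1,m)$ in type $\mathsf{B}_n$ and $\mathfrak{su}(1,1)$ in type $\mathsf{C}_n$), and non-unitarity comes solely from the strict inequality $z=(\lambda+\rho,\beta^\vee)>h^{\vee}_{\rQ(\lambda)}-1=b(\lambda_0)$.
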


\begin{proof}
Recall that if $\mathfrak{g}$ is non-simply laced, then the corresponding  Dynkin diagram for the Hermitian symmetric pair is $\Gamma=(\mathsf{B}_n, \mathsf{B}_{n-1})$ or $(\mathsf{C}_n, \mathsf{A}_{n-1})$, to which we associated the simply laced Hermitian symmetric pair $\widehat{\Gamma} = (\mathsf{A}_{2n-1}, \mathsf{A}_{n-1} \times \mathsf{A}_{n-1})$ or $(\mathsf{D}_{n+1}, \mathsf{A}_n)$, respectively.
The Kostant modules are characterized in Theorem \ref{BC-Kostan}. 
    
    First we consider the case $\SO(2,2n-1)$. In this case, $\Gamma=(\mathsf{B}_n, \mathsf{B}_{n-1})$. By Theorem \ref{BC-Kostan}, there is a bijection between the Kostant modules and the connected subdiagrams
of $\widehat{\Gamma}=(\mathsf{A}_{2n-1}, \mathsf{A}_{n-1} \times \mathsf{A}_{n-1})$ containing $\alpha'=\alpha_1=e_1-e_2$. The number of these subdiagrams is
$2n$.
By Proposition \ref{P:smooth},  there is a bijection between the smooth Schubert varieties in $G/P$ and the connected subdiagrams of $\Gamma$
containing the noncompact simple root $\alpha=e_1-e_2$. The number of these subdiagrams is $n+1$.
Thus there are $n-1$ many Kostant modules which correspond to rationally smooth (not smooth) Schubert varieties in $G/Q$. By Remark \ref{rat-not-sm}, each of these Kostant modules corresponds to a subdiagram of $\widehat{\Gamma}=(\mathsf{A}_{2n-1}, \mathsf{A}_{n-1} \times \mathsf{A}_{n-1})$ that does not correspond to any subdiagram of $\Gamma=(\mathsf{B}_n, \mathsf{B}_{n-1})$. By inspecting the corresponding generalized Young diagrams, we find that each of these subdiagrams is isomorphic to $\mathfrak{su}(1,p)$, for some $p$ such that $n \leq p \leq 2n-2$:
%     \begin{pspicture}(-.10,-.1)(2.6,.1)
% \cnode*(0,0){.07}{d1}
% \uput[d](0,0){\small $1$}
% \cnode*(0.5,0){.07}{d2}
% %\uput[d](0.5,0){\small $n-p+1$}
% \cnode*(1 ,0){.07}{d3}
% \uput[d](1.5,-.065){\small $\cdots$}
% \cnode*(1.5,0){.07}{d4}
% \cnode*(2,0){.07}{d5}
% %\cnode*(2.354,0.354){.07}{d6}
% %\uput[r](2.1,0.2){\small $n-2$}
% \cnode*(2.5,0){.07}{d7}
% \uput[r](2.3,-0.354){\small $p$}
% \ncline{d1}{d2}
% \ncline[linestyle=dotted,dotsep=1.3pt]{d2}{d3}
% \ncline{d3}{d4}
% \ncline{d4}{d5}
% %\ncline{d5}{d6}
% \ncline{d5}{d7}
% \pscircle[linewidth=.5pt,fillstyle=solid](0,0){.07}
% \pscircle[linewidth=.5pt,fillstyle=solid](0.5,0){.07}
% \pscircle[linewidth=.5pt,fillstyle=solid](1,0){.07}
% \pscircle[linewidth=.5pt,fillstyle=solid](1.5,0){.07}
% \pscircle[linewidth=.5pt,fillstyle=solid](2,0){.07}
% %\pscircle[linewidth=.5pt,fillstyle=solid](2.354,.354){.07}
% \end{pspicture}
\[
\begin{pspicture}(-.10,-.1)(2.6,.1)
\cnode*(0,0){.07}{d1}
\uput[d](0,0){\tiny $1$}
\cnode*(0.5,0){.07}{d3}
\uput[d](0.5,0){\tiny $2$}
\cnode*(1 ,0){.07}{d4}
\cnode*(1.5,0){.07}{d5}
\uput[d](1.5,-.065){\small $\cdots$}
\cnode*(2,0){.07}{d6}
\cnode*(2.5,0){.07}{d7}
\uput[d](2.5,-.065){\tiny $p$}
\ncline{d1}{d3}
\ncline[linestyle=dotted,dotsep=1.3pt]{d3}{d4}
\ncline{d4}{d5}
\ncline{d5}{d6}
\psline[linewidth=.5pt](2,0)(2.5,0)
%\psline[linewidth=.5pt](2,0)(2.5,0)
%\uput[r](1.925,0){$>$}
\pscircle[linewidth=.5pt,fillstyle=solid](0.5,0){.07}
\pscircle[linewidth=.5pt,fillstyle=solid](1,0){.07}
\pscircle[linewidth=.5pt,fillstyle=solid](1.5,0){.07}
\pscircle[linewidth=.5pt,fillstyle=solid](2,0){.07}
\pscircle[linewidth=.5pt,fillstyle=solid](2.5,0){.07}
\end{pspicture}.
\]
From \cite[\S 3.7]{EHP}, the  generalized Young diagram for the corresponding $x\in {}^\mathfrak{k}W$ is 
\begin{equation*}
    \small{ \begin{tikzpicture}[scale=0.8,baseline=-48pt]
		\hobox{0}{0}{1}
		\hobox{1}{0}{2}
            \hobox{2}{0}{\cdots}
		\hobox{3}{0}{n}
        \hobox{3}{1}{n-1}
        \hobox{3}{2}{\vdots}
        \hobox{3}{3}{m+1}
  		\end{tikzpicture}},
\end{equation*}
where $m=2n-p-1$.
Correspondingly we have 
$$\Phi_x=\tiny{ \begin{tikzpicture}[scale=0.9,baseline=-64pt]
		\hobox{0}{0}{e_1-e_2}
		\hobox{1}{0}{e_{1}-e_3}
            \hobox{2}{0}{\cdots}
             \hobox{3}{0}{e_1-e_n}
		\hobox{4}{0}{e_1}
         \hobox{4}{1}{e_1+e_n}
          \hobox{4}{2}{e_1+e_{n-1}}
           \hobox{4}{3}{\vdots}
            \hobox{4}{4}{e_1+e_{m+2}}
  		\end{tikzpicture}}.$$
The corresponding Kostant module is $L(\lambda)$ with highest weight $\lambda=\tilde{x}\rho-\rho$.
By Lemma \ref{L:rho}, we have $\lambda=-\langle \Phi_{\tilde{x}}\rangle$. By Lemma \ref{L:involution}, we have
% $ \Phi_{\tilde{x}}=w_c^{-1}(\Phi(\mathfrak{p}^{+})\setminus \Phi_x)$ {\color{red}as follows:}
$$\Phi_{\tilde{x}}=w_c^{-1}(\Phi(\mathfrak{p}^{+})\setminus \Phi_x)=\tiny{ \begin{tikzpicture}[scale=0.9,baseline=-13pt]
		\hobox{0}{0}{e_1-e_2}
		\hobox{1}{0}{e_{1}-e_3}
            \hobox{2}{0}{\cdots}
             \hobox{3}{0}{e_1-e_{m+1}}
  		\end{tikzpicture}}.$$
Thus we have $\lambda=-\langle \Phi_{\tilde{x}}\rangle=(-m,1^m,0^{n-m-1}).$ Note that $1\leq m\leq n-1$.
The corresponding two root systems are $\rQ(\lambda)=\rR(\lambda)=\mathfrak{su}(1,m)$. 
Recall that for the root system of type $\mathsf{B}_n$, the highest root is $\beta=e_1+e_2$.
Therefore if we write $\lambda=\lambda_0+z\zeta$ with $\zeta$ orthogonal to $\Phi(\mathfrak{k})$, and  $\lambda_{0} \in \mathfrak{h}^{*} $ such that ($\lambda_{0}  + \rho, \beta^\vee  $)=0 and $z=(\lambda+\rho,\beta^\vee) \in \mathbb{R}$, then we will have $z=(\lambda+\rho,\beta^{\vee})=-m+1+2n-2\geq m+1> h^{\vee}_{\rQ(\lambda)}-1=m+1-1=m$. By Theorem \ref{last-unitary}, it follows that $L(\lambda)$ is not unitarizable.

Next we consider the case $\Sp(2n, \mathbb{R})$. In this case, $\Gamma=(\mathsf{C}_n, \mathsf{A}_{n-1})$. By Theorem \ref{BC-Kostan}, there is a bijection between the Kostant modules and the connected subdiagrams
of $\widehat{\Gamma}=(\mathsf{D}_{n+1}, \mathsf{A}_{n})$ containing $\alpha'=\alpha_{n+1}=e_n+e_{n+1}$. The number of these subdiagrams is
$2n$.
By Proposition \ref{P:smooth},  there is a bijection between the smooth Schubert varieties in $G/Q$ and the connected subdiagrams of $\Gamma$
containing the noncompact simple root $\alpha=2e_n$. The number of these subdiagrams is $n+1$.
Thus there are $n-1$ Kostant modules which correspond to rationally smooth (not smooth) Schubert varieties in $G/Q$. By Remark \ref{rat-not-sm}, each of these Kostant modules corresponds to a subdiagram of $\widehat{\Gamma}=(\mathsf{D}_{n+1}, \mathsf{A}_n)$ that does not correspond to any subdiagram of $\Gamma=(\mathsf{C}_n, \mathsf{A}_{n-1})$. By inspecting the corresponding generalized Young diagrams, we find that each of these subdiagrams is isomorphic to $\mathfrak{su}(1,p)$, for some $p$ such that $2\leq p\leq n$:
\begin{equation}
    \begin{pspicture}(-.10,-.1)(2.6,.1)
\cnode*(0,0){.07}{d1}
\uput[d](0,0){\tiny $n-p$}
\cnode*(0.5,0){.07}{d2}
%\uput[d](0.5,0){\small $n-p+1$}
\cnode*(1 ,0){.07}{d3}
\uput[d](1.5,-.065){\small $\cdots$}
\cnode*(1.5,0){.07}{d4}
\cnode*(2,0){.07}{d5}
%\cnode*(2.354,0.354){.07}{d6}
\uput[r](2.1,0.2){\tiny $n-2$}
\cnode*(2.354,-0.354){.07}{d7}
\uput[r](2.354,-0.354){\tiny $n$}
\ncline{d1}{d2}
\ncline[linestyle=dotted,dotsep=1.3pt]{d2}{d3}
\ncline{d3}{d4}
\ncline{d4}{d5}
%\ncline{d5}{d6}
\ncline{d5}{d7}
\pscircle[linewidth=.5pt,fillstyle=solid](0,0){.07}
\pscircle[linewidth=.5pt,fillstyle=solid](0.5,0){.07}
\pscircle[linewidth=.5pt,fillstyle=solid](1,0){.07}
\pscircle[linewidth=.5pt,fillstyle=solid](1.5,0){.07}
\pscircle[linewidth=.5pt,fillstyle=solid](2,0){.07}
%\pscircle[linewidth=.5pt,fillstyle=solid](2.354,.354){.07}
\end{pspicture}
\end{equation}
From \cite[\S 3.7]{EHP}, the  generalized Young diagram for the corresponding $x\in {}^\mathfrak{k}W$ is 
\begin{equation*}
    \text{\tiny{ \begin{tikzpicture}[scale=0.8,baseline=-13pt]
		\hobox{0}{0}{n}
		\hobox{1}{0}{n-1}
            \hobox{2}{0}{\cdots}
		\hobox{3}{0}{\small{n-p+1}}
  		\end{tikzpicture}}}.
\end{equation*}
Correspondingly we have 
$$\Phi_x=\tiny{ \begin{tikzpicture}[scale=1.1,baseline=-17pt]
		\hobox{0}{0}{2e_n}
		\hobox{1}{0}{e_{n-1}+e_n}
            \hobox{2}{0}{\cdots}
		\hobox{3}{0}{\small{e_{n-p+1}+e_n}}
  		\end{tikzpicture}}.$$
The corresponding Kostant module is $L(\lambda)$ with highest weight $\lambda=\tilde{x}\rho-\rho$.
By Lemma \ref{L:rho}, we have $\lambda=-\langle \Phi_{\tilde{x}}\rangle$. By Lemma \ref{L:involution}, we have 
$$\Phi_{\tilde{x}}=w_c^{-1}(\Phi(\mathfrak{p}^{+})\setminus \Phi_x)=\tiny{ \begin{tikzpicture}[scale=0.9,baseline=-89pt]
		\hobox{0}{0}{2e_n}
		\hobox{1}{0}{e_{n-1}+e_n}
        \hobox{1}{1}{\ddots}
        \hobox{2}{2}{2e_{p+1}}
            \hobox{2}{0}{\cdots}
             \hobox{3}{0}{\cdots}
             \hobox{4}{0}{\cdots}
             \hobox{5}{0}{\cdots}
              \hobox{6}{0}{e_2+e_n}
		\hobox{7}{0}{\small{e_{1}+e_n}}
         \hobox{7}{1}{\vdots}
          \hobox{6}{1}{\vdots}
         \hobox{7}{2}{e_1+e_{p+1}}
         \hobox{6}{2}{e_2+e_{p+1}}
          \hobox{6}{3}{e_2+e_{p}}
           \hobox{6}{4}{\vdots}
            \hobox{6}{5}{e_2+e_{3}}
             \hobox{6}{6}{2e_2}
             \hobox{5}{5}{2e_3}
              \hobox{4}{4}{\ddots}
               \hobox{3}{3}{2e_p}
  		\end{tikzpicture}}.$$
Thus we have $\lambda=-\langle \Phi_{\tilde{x}}\rangle=(-(n-p), (-n)^{p-1}, (-n-1)^{n-p}).$ 
The corresponding two root systems are $\rQ(\lambda)=\rR(\lambda)=\mathfrak{su}(1,1)$. 
Recall that for the root system of type $\mathsf{C}_n$, the highest root is $\beta=2e_1$. 
Therefore if we write $\lambda=\lambda_0+z\zeta$ with $\zeta$ orthogonal to $\Phi(\mathfrak{k})$, and $\lambda_{0} \in \mathfrak{h}^{*} $ such that $(\lambda_{0}  + \rho, \beta^\vee) = 0$ and $z=(\lambda+\rho,\beta^\vee) \in \mathbb{R}$, then we will have $z=(\lambda+\rho,\beta^{\vee})=p\geq 2> h^{\vee}_{\rQ(\lambda)}-1=2-1=1$. By Theorem \ref{last-unitary}, it follows that $L(\lambda)$ is not unitarizable.
\end{proof}

\begin{prop}\label{w-exp}
    Suppose that $L(-w\rho-\rho)$ is a unitary highest weight module with highest weight $-w\rho-\rho$. Then 
    the set of simple roots in the corresponding subdiagram $\Gamma'$ is the subset
    \[
       \varIota=\mathrm{supp}(w_cw)= \{ \alpha_i \in \Pi \mid s_i:=s_{\alpha_i} \textup{ appears in a reduced expression of } w_cw \}.
    \]    
    
%     there is a  connected Dynkin subdiagram 
% containing the noncompact simple root $\alpha$ corresponding to $\Phi_{w_cw}$. 
\end{prop}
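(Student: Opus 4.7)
The plan is to identify $\mathrm{supp}(x) = \{f(\beta) \mid \beta \in \Phi_x\}$ with $\varIota$, where $x := w_c w$. Since $L(-w\rho - \rho)$ is a highest weight Harish-Chandra module, we can write $w = w_c v$ for some $v \in {}^{\mathfrak{k}} W$, and hence $x = v \in {}^{\mathfrak{k}} W$. By Theorem~\ref{thm:bij}, the corresponding subdiagram $\Gamma'$ satisfies $\Phi_x = \Phi' \cap \Phi(\mathfrak{p}^+)$, where $\Phi'$ is the root subsystem spanned by the simple roots $\varIota$ of $\Gamma'$. Choosing any enumeration $\beta_1, \ldots, \beta_\ell$ of $\Phi_x$ whose initial segments are lower order ideals, Proposition~\ref{P:product} yields the reduced expression $x = s_{f(\beta_1)} \cdots s_{f(\beta_\ell)}$. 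By Matsumoto's theorem, the set of simple reflections in any reduced expression is an invariant of $x$, so $\mathrm{supp}(x) = \{f(\beta) \mid \beta \in \Phi_x\}$, and the problem reduces to showing that this set equals $\varIota$.

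For the inclusion $\mathrm{supp}(x) \subseteq \varIota$, I would fix $\beta \in \Phi_x$ and let $v_\beta \in {}^{\mathfrak{k}} W$ be the element with $\Phi_{v_\beta} = \Phi(\mathfrak{p}^+)_{<\beta}$. Since $\Phi_x$ is a lower order ideal of $\Phi(\mathfrak{p}^+)$, we have $\Phi_{v_\beta} \subseteq \Phi_x \subseteq \Phi'$. The second factorization in Proposition~\ref{P:product} expresses $v_\beta$ as a product of reflections indexed by roots of $\Phi'$, so $v_\beta \in W(\Phi')$. Hence $f(\beta) = v_\beta^{-1}\beta \in \Phi' \cap \Pi = \varIota$, since the simple roots of a root subsystem of the given form are exactly its simple roots among those of the ambient system.

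For the reverse inclusion $\varIota \subseteq \mathrm{supp}(x)$, I would exploit that $(\Phi', \Phi' \cap \Phi(\mathfrak{k}))$ is itself a Hermitian symmetric pair with simple system $\varIota$, thanks to the connectedness of $\Gamma'$ and its containment of the noncompact simple root $\alpha$. The identity $\Phi_x = \Phi' \cap \Phi(\mathfrak{p}^+) = \Phi'(\mathfrak{p}^+)$ exhibits $x$ as the longest element of ${}^{\mathfrak{k}'} W'$, where $W' := W(\Phi')$ and $\mathfrak{k}'$ is the compact part within $\Phi'$. For every $\beta \in \Phi_x$, the sets $\Phi(\mathfrak{p}^+)_{<\beta}$ and $\Phi'(\mathfrak{p}^+)_{<\beta}$ coincide, because any positive root lying strictly below $\beta$ must belong to $\Phi_x \subseteq \Phi'$ by the lower-ideal property. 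Thus the labeling map $f$ for $\Phi$ restricts on $\Phi_x$ to the analogous labeling map $f'$ for $\Phi'$. Applying the construction of \cite[\S3.8]{EHP} to the sub-pair, the generalized Young diagram of the longest element of ${}^{\mathfrak{k}'} W'$ is filled by each simple root index in $\varIota$ at least once, so $\{f'(\beta) \mid \beta \in \Phi'(\mathfrak{p}^+)\} = \varIota$, giving the reverse inclusion.

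The delicate step is the alignment of the two labeling maps $f$ (for $\Phi$) and $f'$ (for $\Phi'$) on $\Phi_x$. Once one verifies, via the lower-ideal property of $\Phi_x$, that the relevant sub-ideals agree in both root systems, the surjectivity onto $\varIota$ reduces to the standard labeling of generalized Young diagrams recorded in \cite[\S3.8]{EHP}, and the two inclusions combine to give the claim.
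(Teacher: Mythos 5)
Your proof is correct, but it takes a genuinely different route from the paper's. The paper's own argument is essentially a citation: after identifying $L(-w\rho-\rho)=L(\lambda_{\Phi'})=L_x$ with $x=w_cw\in{}^{\fk}W$, it invokes \cite[Thm.~6.2]{BH:09} together with the proof of Proposition~\ref{generalizeddiagram}, where the correspondence between subdiagrams and supports is already built into the classification of Kostant modules. You instead give a self-contained combinatorial argument inside the present paper's framework: you reduce the claim to $\{f(\beta)\mid\beta\in\Phi_x\}=\varIota$ via Proposition~\ref{P:product} and the invariance of support under Matsumoto's theorem, prove $\mathrm{supp}(x)\subseteq\varIota$ by observing that $\Phi_{v_\beta}\subseteq\Phi_x\subseteq\Phi'$ forces $v_\beta\in W(\Phi')=W_\varIota$ and hence $f(\beta)=v_\beta^{-1}\beta\in\Phi'\cap\Pi=\varIota$, and prove the reverse inclusion by recognizing $x$ as the longest element of ${}^{\fk'}W'$ for the sub-pair and matching the two labeling maps on the lower ideal $\Phi_x$. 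All the steps check out: $\Phi_x\subseteq\Phi_\varIota$ does imply $x\in W_\varIota$, the orders on $\Phi(\fp^+)$ and $\Phi'(\fp^+)$ agree on the lower ideal $\Phi_x$ because every saturated chain below $\beta$ stays inside $\Phi_x\subseteq\Phi'$ and its covering steps are simple roots of $\Phi'$, and the longest element of ${}^{\fk'}W'$ has full support since the highest root of the connected system $\Phi'$ involves every simple root. The two places where you wave your hands --- the chain argument aligning $f$ with $f'$, and the surjectivity of the labeling onto $\varIota$ --- are exactly the points that would need to be written out, but both are routine and the second is indeed recorded in \cite[\S3.8]{EHP}. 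What your approach buys is independence from \cite{BH:09}: the paper's proof requires the reader to unwind how the bijection in Theorem~\ref{sub-Kostant} is realized on supports, whereas yours derives the support statement directly from the poset combinatorics of ${}^{\fk}W$ already developed in \S\ref{pre}.
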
	
\begin{proof}
    Note that $-w\rho-\rho=-w_cx\rho-\rho=w_cxw_0\rho-\rho=\tilde{x}\rho-\rho$ for some $x\in {}^{\fk}W$. 
    Thus from the construction of unitary highest weight modules $L(\lambda_{\Phi'})$, we have $L(-w\rho-\rho)=L(\lambda_{\Phi'})=L_{{x}}$ for some root subsystem $\Phi'$. Therefore, the result follows from \cite[Thm.~6.2]{BH:09} and the proof of Proposition~\ref{generalizeddiagram}.
\end{proof}	

From the proof of Proposition \ref{generalizeddiagram}, we also have the following result.

\begin{cor}\label{nonsimply-schu}
Let $G_{\mathbb{R}}$ be a Lie group of Hermitian type whose complexified Lie algebra $\mathfrak{g}$ is non-simply laced. Suppose $L(-w\rho-\rho)$ is a highest weight Harish-Chandra module of $G_{\mathbb{R}}$. 
  Then  $L(-w\rho-\rho)$ is  unitarizable  if and only if the Schubert variety $X_Q(x^{-1})$ in $G/Q$ is smooth, where $x=w_cw$, and where $Q$ is the parabolic subgroup of $G$ such that ${\rm Lie}(Q)=\fq=\fk+\fp^+$.
\end{cor}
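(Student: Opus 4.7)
The plan is to obtain the corollary by composing two bijections that are already established: the one in Proposition~\ref{generalizeddiagram} (the non-simply laced case of Theorem~\ref{thm:bij}) between unitary highest weight modules in $\mathcal{O}^{\fq}_{-2\rho}$ and connected Dynkin subdiagrams of $\Gamma$ containing the noncompact simple root, and the one in Proposition~\ref{P:smooth} between such subdiagrams and smooth Schubert varieties in $G/Q$. Since $L(-w\rho-\rho)$ is a highest weight Harish-Chandra module, I can first write $w=w_c x$ with $x\in{}^{\fk}W$ by \cite[Prop.~2.3]{BN05}, so that $L(-w\rho-\rho) = L_x$ in the notation of the regular block.

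For the forward direction, if $L_x$ is unitarizable then Theorem~\ref{kostant-module} makes it a Kostant module, Proposition~\ref{generalizeddiagram} attaches to it a unique connected subdiagram $\Gamma'\subseteq \Gamma$ containing the noncompact simple root $\alpha$ (via $\Phi'\cap\Phi(\mathfrak{p}^+)=\Phi_x$), and Proposition~\ref{P:smooth} sends $\Gamma'$ to a smooth Schubert variety in $G/Q$, which under the matching of parametrizations must be $X_Q(x^{-1})$. Conversely, if $X_Q(x^{-1})$ is smooth, Proposition~\ref{P:smooth} recovers a connected subdiagram $\Gamma'$ containing $\alpha$, and then Proposition~\ref{unitarity} directly yields the unitarizability of $L(\lambda_{\Phi'}) = L_x = L(-w\rho-\rho)$.

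The genuine obstacle, which is already resolved inside the proof of Proposition~\ref{generalizeddiagram}, is that in types $\mathsf{B}_n$ and $\mathsf{C}_n$ rational smoothness of a Schubert variety is strictly weaker than smoothness, so Theorem~\ref{T:BGG} alone does not separate the unitary modules from the other Kostant modules. One must verify that the Kostant modules corresponding to subdiagrams of $\widehat{\Gamma}$ which do not descend to connected subdiagrams of $\Gamma$ fail to be unitary. The hard part is precisely this case check: compute the highest weight of each such module as $\lambda = -\langle \Phi_{\tilde x}\rangle$, identify the associated root systems $\rQ(\lambda)=\rR(\lambda)$, and show via Theorem~\ref{last-unitary} that $z = (\lambda+\rho,\beta^\vee)$ strictly exceeds the last unitary reduction point $b(\lambda_0)$. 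Once this step is carried out for both $(\mathsf{B}_n,\mathsf{B}_{n-1})$ and $(\mathsf{C}_n,\mathsf{A}_{n-1})$, as in the proof of Proposition~\ref{generalizeddiagram}, the corollary follows by composing the two bijections above, exactly as in the simply laced analogue Corollary~\ref{simply-schu}.
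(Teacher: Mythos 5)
Your proposal is correct and follows essentially the same route as the paper: the paper derives this corollary directly from the proof of Proposition~\ref{generalizeddiagram}, whose content is precisely the composition of the two bijections you describe together with the case-by-case verification (via $\lambda=-\langle\Phi_{\tilde x}\rangle$, the identification $\rQ(\lambda)=\rR(\lambda)$, and Theorem~\ref{last-unitary}) that the Kostant modules attached to subdiagrams of $\widehat{\Gamma}$ not descending to connected subdiagrams of $\Gamma$ --- equivalently, to rationally smooth but non-smooth Schubert varieties --- are not unitarizable. You have correctly isolated this as the genuine obstacle and pointed to the same resolution, so nothing further is needed.
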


\begin{proof}[Proof of Corollary \ref{uni-bij}]
Suppose $L(-w\rho-\rho)$ is a highest weight Harish-Chandra
module.
From Corollary \ref{simply-schu} and Corollary \ref{nonsimply-schu}, we know that $L(-w\rho-\rho)$ is a unitary highest weight module if and only if  the Schubert variety $X_Q(x^{-1})$ in $G/Q$ is smooth, where $x=w_cw$. From Corollary \ref{s-bij}, the Schubert variety $X_Q(x^{-1})$ in $G/Q$ is smooth if and only if the Schubert variety $X(w_cx)=X(w)$ in $G/B$ is smooth. Therefore $L(-w\rho-\rho)$ is a unitary highest weight module if and only if
the Schubert variety $X(w)$ in $G/B$ is smooth.
\end{proof}
% \begin{rem}
%     From Theorem \ref{thm:bij}, Proposition \ref{P:smooth} and  Corollary \ref{s-bij}, we can see that Corollary \ref{uni-bij} holds.
% \end{rem}

\begin{ex}
    Let $\Gamma=(\mathsf{C}_3, \mathsf{A}_2)$ with the noncompact simple root $\alpha=e_3$. Then we have  $\widehat{\Gamma}=(\mathsf{D}_{4}, \mathsf{A}_{3})$ with $\alpha'=e_3+e_{4}$.
In this case there are $8$ Schubert varieties,
of which $6$ are rationally smooth. Four of the rationally smooth Schubert varieties are in fact smooth; see Figure~\ref{fig:schubert_varieties}. The solid circled nodes correspond to smooth Schubert varieties, and the dotted-circled nodes correspond to rationally smooth but not smooth Schubert varieties.

\begin{figure}[H]
\centering
\begin{minipage}[c]{0.45\textwidth}
    \centering
    % 创建第一个图片内容 (Schubert varieties)
   \Yboxdim{8pt}
\begin{pspicture}(-2,-2)(2,7.5)
\psset{linewidth=.5pt,labelsep=8pt,nodesep=0pt}
%\small
$
\cnode*(0,6.5){.07}{a1}\uput[r](0,6.5){{\ytableausetup{smalltableaux}
                    \begin{ytableau}
                        3 & 2 & 1 \\
                        \none &3 &2 \\
                        \none & \none & 3
                    \end{ytableau}}}
\pscircle(0,6.5){.15}
\cnode*(0,5){.07}{a2} \uput[r](0,5){{\ytableausetup{smalltableaux}
                    \begin{ytableau}
                        3 & 2 & 1 \\
                        \none &3 &2 
                    \end{ytableau}}}
\cnode*(0,3.5){.07}{a3} \uput[r](0,3.8){{\ytableausetup{smalltableaux}
                    \begin{ytableau}
                        3 & 2 & 1 \\
                        \none &3 &\none 
                    \end{ytableau}}}
\cnode*(-1,2.5){.07}{a4} \uput[l](-1,2.5){{\ytableausetup{smalltableaux}
                    \begin{ytableau}
                        3 & 2  \\
                        \none &3
                    \end{ytableau}}}
\pscircle(-1,2.5){.15}
\cnode*(1,2.5){.07}{a5} \uput[r](1,2.5){\scriptsize\young(321)}
\pscircle[linestyle=dotted,dotsep=1.3pt](1,2.5){.15}
\cnode*(0,1.5){.07}{a6} \uput[r](0,1.3){\scriptsize\young(32)}
\pscircle[linestyle=dotted,dotsep=1.3pt](0,1.5){.15}
\cnode*(0,0){.07}{a7} \uput[r](0,0){\scriptsize\young(3)}
\pscircle(0,0){.15}
\cnode*(0,-1.5){.07}{a8} \uput[r](0,-1.5){e}
\pscircle(0,-1.5){.15}
\ncline{a1}{a2}
\ncline{a2}{a3}
\ncline{a3}{a4}
\ncline{a3}{a5}
\ncline{a4}{a6}
\ncline{a5}{a6}
\ncline{a6}{a7}
\ncline{a7}{a8}
\cnode*(2,6.5){.07}{d1}
\cnode*(2.5,6.5){.07}{d2}
\cnode*(3,6.5){.07}{d3}
\ncline{d1}{d2}
\psline[linewidth=.5pt](2.5,6.56)(3,6.56)
\psline[linewidth=.5pt](2.5,6.44)(3,6.44)
\uput[r](2.35,6.5){<}
\pscircle[linewidth=.5pt,fillstyle=solid](2,6.5){.07}
\pscircle[linewidth=.5pt,fillstyle=solid](2.5,6.5){.07}
\small
\uput[d](2,6.5){1}
\uput[d](2.5,6.5){2}
\uput[d](3,6.5){3}
\normalsize
%%%
\cnode*(-3,2.5){.07}{d4}
\cnode*(-2.5 ,2.5){.07}{d5}
\psline[linewidth=.5pt](-3,2.56)(-2.5,2.56)
\psline[linewidth=.5pt](-3,2.44)(-2.5,2.44)
\uput[r](-3.15,2.5){<}
\pscircle[linewidth=.5pt,fillstyle=solid](-3,2.5){.07}
%\pscircle[linewidth=.5pt,fillstyle=solid](-3,2.5){.07}
\small
\uput[d](-3,2.5){2}
\uput[d](-2.5,2.5){3}
%%%
\cnode*(1.5 ,0){.07}{d6}
\uput[d](1.5 ,0){3}
\normalsize
\uput[d](1.5 ,-1.1){\varnothing}
$
\end{pspicture}
%\subcaption{Smooth and rationally smooth Schubert varieties for $(\mathsf{C}_3,\mathsf{A}_2)$}\label{c3}
\end{minipage}
\hfill
\begin{minipage}[c]{0.45\textwidth}
    \centering
    \Yboxdim{8pt}
\begin{pspicture}(-2,-2)(2,7.5)
\psset{linewidth=.5pt,labelsep=8pt,nodesep=0pt}
%\small
$
\cnode*(0,6.5){.07}{a1}\uput[r](0,6.5){{\ytableausetup{smalltableaux}
                    \begin{ytableau}
                        4 & 2 & 1 \\
                        \none &3 &2 \\
                        \none & \none & 4
                    \end{ytableau}}}
\pscircle(0,6.5){.15}
\cnode*(0,5){.07}{a2} \uput[r](0,5){{\ytableausetup{smalltableaux}
                    \begin{ytableau}
                        4 & 2 & 1 \\
                        \none &3 &2
                    \end{ytableau}}}
\cnode*(0,3.5){.07}{a3} \uput[r](0,3.8){{\ytableausetup{smalltableaux}
                    \begin{ytableau}
                        4 & 2 & 1 \\
                        \none &3 &\none
                    \end{ytableau}}}
\cnode*(-1,2.5){.07}{a4} \uput[l](-1,2.5){{\ytableausetup{smalltableaux}
                    \begin{ytableau}
                        4 & 2\\
                        \none &3
                    \end{ytableau}}}
\pscircle(-1,2.5){.15}
\cnode*(1,2.5){.07}{a5} \uput[r](1,2.5){\scriptsize\young(421)}
\pscircle(1,2.5){.15}
\cnode*(0,1.5){.07}{a6} \uput[r](0,1.3){\scriptsize\young(42)}
\pscircle(0,1.5){.15}
\cnode*(0,0){.07}{a7} \uput[r](0,0){\scriptsize\young(4)}
\pscircle(0,0){.15}
\cnode*(0,-1.5){.07}{a8} \uput[r](0,-1.5){e}
\pscircle(0,-1.5){.15}
\ncline{a1}{a2}
\ncline{a2}{a3}
\ncline{a3}{a4}
\ncline{a3}{a5}
\ncline{a4}{a6}
\ncline{a5}{a6}
\ncline{a6}{a7}
\ncline{a7}{a8}
\cnode*(2,6.5){.07}{d1}
\cnode*(2.5,6.5){.07}{d2}
\cnode*(2.8,6.8){.07}{d3}
\cnode*(2.8,6.2){.07}{c1}
\ncline{d1}{d2}
\ncline{d2}{d3}
\ncline{d2}{c1}
% \psline[linewidth=.5pt](2.5,6.56)(3,6.56)
% \psline[linewidth=.5pt](2.5,6.44)(3,6.44)
%\uput[r](2.35,6.5){<}
\pscircle[linewidth=.5pt,fillstyle=solid](2.8,6.8){.07}
\pscircle[linewidth=.5pt,fillstyle=solid](2,6.5){.07}
\pscircle[linewidth=.5pt,fillstyle=solid](2.5,6.5){.07}
\small
\uput[d](2,6.5){1}
\uput[d](2.5,6.5){2}
\uput[d](3.1,6.5){4}
\uput[d](3.1,7.2){3}
\normalsize
\cnode*(-3,2.5){.07}{d5}
\cnode*(-2.5,2.5){.07}{d6}
\cnode*(-3.5 ,2.5){.07}{d4}
\ncline{d4}{d5}
\ncline{d5}{d6}
\pscircle[linewidth=.5pt,fillstyle=solid](-3,2.5){.07}
\pscircle[linewidth=.5pt,fillstyle=solid](-3.5,2.5){.07}
\small
\uput[d](-3,2.5){2}
\uput[d](-3.5,2.5){3}
\uput[d](-2.5,2.5){4}
\cnode*(2.5 ,2.5){.07}{c2}
\cnode*(3,2.5){.07}{c3}
\cnode*(3.5 ,2.5){.07}{c4}
\ncline{c2}{c3}
\ncline{c3}{c4}
%\psline[linewidth=.5pt](-3.5,2.56)(-3,2.56)
%%\psline[linewidth=.5pt](-3.5,2.49)(-3,2.49)
%\uput[r](-3.65,2.5){<}
\pscircle[linewidth=.5pt,fillstyle=solid](2.5,2.5){.07}
%\pscircle[linewidth=.5pt,fillstyle=solid](-3,2.5){.07}
%%\psline[linewidth=.5pt](-4,2.49)(-3.53,2.49)
%\uput[r](-3.65,2.5){<}
\pscircle[linewidth=.5pt,fillstyle=solid](3,2.5){.07}
\small
\uput[d](2.5,2.5){1}
\uput[d](3,2.5){2}
\uput[d](3.5,2.5){4}
\cnode*(1.5,1.5){.07}{d8}
\cnode*(2,1.5){.07}{d9}
%\psline[linewidth=.5pt](-3.5,2.56)(-3,2.56)
%\psline[linewidth=.5pt](-3.5,2.49)(-3,2.49)
%\uput[r](-3.65,2.5){<}
\pscircle[linewidth=.5pt,fillstyle=solid](1.5,1.5){.07}
%\pscircle[linewidth=.5pt,fillstyle=solid](-3,2.5){.07}
%\psline[linewidth=.5pt](1.52,1.5)(2,1.5)
%\uput[r](-3.65,2.5){<}
%\pscircle[linewidth=.5pt,fillstyle=solid](2,1.5){.07}
\small
\uput[d](1.5,1.5){2}
\uput[d](2,1.5){4}
\ncline{d8}{d9}
\cnode*(1.5 ,0){.07}{d7}
\uput[d](1.5 ,0){4}
\normalsize
\uput[d](1.5 ,-1.1){\varnothing}
$
\end{pspicture}
%\subcaption{Smooth Schubert varieties for $(\mathsf{D}_4,\mathsf{A}_3)$}\label{d4}
\end{minipage}
\caption{(Rationally) smooth Schubert varieties for  
$(\mathsf{C}_3,\mathsf{A}_2)$ and $(\mathsf{D}_4, \mathsf{A}_3)$. %\textcolor{red}{Should we explain here the circled nodes and dotted-circled nodes?}
}
\label{fig:schubert_varieties}
\end{figure}
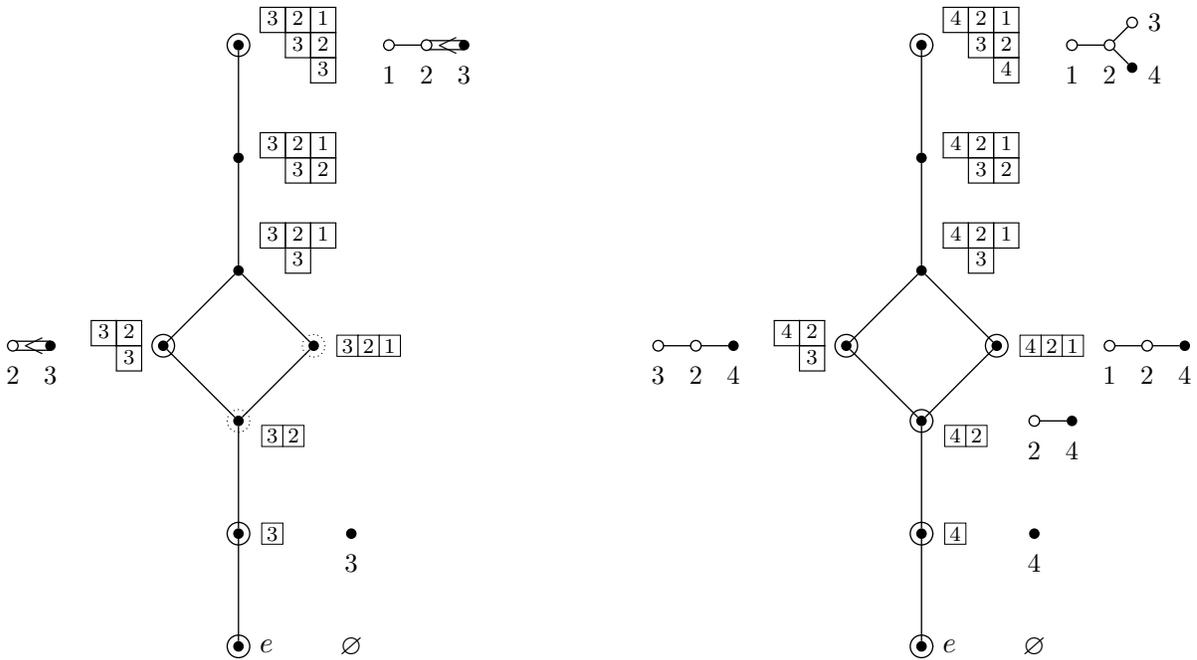

\end{ex}

\section{The number of unitary highest weight modules in a cell}\label{number}   
    
We refer to \cite{KL79}  for the definition of the Kazhdan--Lusztig right cell, and we write $\stackrel{R}{\sim}$ to denote the right cell equivalence relation.
% Recall that $L(w)$ is a highest weight module with the highest weight $\lambda=w\rho-\rho$. 
From Lusztig \cite[Cor. 11.7]{lus03}, we know that $w\stackrel{R}{\sim} x$ if and only if $ww_0\stackrel{R}{\sim}xw_0$.

\begin{thm}[{\cite[Thm.~1.1]{BBLX25}}]\label{one-cell}
	Suppose that $L(-w\rho-\rho)$ and $L(-x\rho-\rho)$ are highest weight Harish-Chandra modules. Then $w\stackrel{R}{\sim} x$ if and only if $\AV(L(-w\rho-\rho))=\AV(L(-x\rho-\rho))=\overline{\mathcal{O}_k}$ for some $0\leq k\leq r$.
\end{thm}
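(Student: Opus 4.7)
The plan is to prove the two implications separately, drawing on the rigidity of primitive ideals on Kazhdan--Lusztig cells and on Yamashita's classification in Proposition~\ref{Ok}.

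For the forward direction ($w\stackrel{R}{\sim} x \Rightarrow$ equal associated varieties), I would appeal to the classical theorem of Joseph that the annihilator $\mathrm{Ann}_{U(\fg)}L(-w\rho-\rho)$ is a right-cell invariant, together with the irreducibility of primitive associated varieties: $\AV(\mathrm{Ann}\, L(-w\rho-\rho))=\overline{\mathbb{O}}$ for a single nilpotent coadjoint orbit $\mathbb{O}\subset\fg^*$. For a highest weight Harish-Chandra module, $\AV(L(-w\rho-\rho))$ sits inside $\overline{\mathbb{O}}\cap\fp^+$, and by Proposition~\ref{Ok} it must equal a single $\overline{\mathcal{O}_k}$; so $k=k(w)$ is constant on each right cell, which gives the forward implication.

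For the reverse direction, I would run a counting argument. By Proposition~\ref{C: dimYk}, all $r+1$ closures $\{0\}=\overline{\mathcal{O}_0}\subsetneq\overline{\mathcal{O}_1}\subsetneq\cdots\subsetneq\overline{\mathcal{O}_r}=\fp^+$ arise as $\AV$ of some unitary highest weight module (the integer points along the unitary ray in Figure~\ref{zk} realize each value of $k$). Hence the forward direction produces a surjection from right cells meeting the Harish-Chandra locus $\{w_c v:v\in {}^\fk W\}$ onto $\{0,1,\dots,r\}$, and it suffices to show that \emph{exactly} $r+1$ right cells meet this locus. The natural way is to exhibit canonical representatives $w^{(0)},\dots,w^{(r)}$ (one per orbit) and show that any HC parameter $w$ with $k(w)=k$ can be connected to $w^{(k)}$ by right $\tau$-string (or Knuth) operations that stay inside the Harish-Chandra locus.

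The main obstacle is implementing this combinatorial step uniformly. In the type~$\mathsf{A}$ case $\su(p,q)$ it reduces to verifying that the RSK Q-symbol of $w_c v$, for $v\in{}^\fk W$ a partition in the $p\times q$ rectangle, depends only on the orbit index $k$; in types $\mathsf{B},\mathsf{C},\mathsf{D}$ the analogue uses Garfinkle's domino-tableau algorithm, together with the signed-partition description of the $K$-orbits $\mathcal{O}_k\subset\fp^+$; and for $\mathfrak{e}_{6(-14)}$ and $\mathfrak{e}_{7(-25)}$ the (small) number of HC parameters permits a direct case check. This is the strategy of \cite{BBLX25}, where the explicit link between the cell-theoretic $Q$-symbol and the index $k(w)$ provided by Proposition~\ref{C: dimYk} does the heavy lifting.
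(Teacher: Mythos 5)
First, note that the paper does not actually prove this statement: it is imported verbatim as \cite[Thm.~1.1]{BBLX25}, so there is no internal proof to compare against, and your proposal has to be judged on its own merits (and against what the cited reference presumably does).

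Your forward direction is essentially right but has one imprecision worth fixing. Knowing only that $\AV(L(-w\rho-\rho))\subseteq \overline{\mathbb{O}}\cap\fp^+$ and that it equals some $\overline{\mathcal{O}_j}$ does \emph{not} pin down $j$: if $\mathbb{O}=G\cdot\mathcal{O}_k$ then $\overline{\mathbb{O}}\cap\fp^+$ contains every $\overline{\mathcal{O}_j}$ with $j\leq k$, so containment alone is compatible with any smaller index. What you need is Vogan's equality $\AV(\mathrm{Ann}\,M)=\overline{G\cdot \AV(M)}$ (equivalently $\dim\AV(\mathrm{Ann}\,M)=2\,\mathrm{GKdim}\,M$) for irreducible Harish-Chandra modules; combined with the fact that the $\mathcal{O}_k$ have strictly increasing dimensions, this forces $k(w)=k(x)$ whenever the annihilators agree. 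With that substitution, and granting the standard convention that $\mathrm{Ann}\,L(-w\rho-\rho)$ is a right-cell invariant, the forward implication is sound.

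The reverse direction is where the genuine gap lies. Your counting argument is logically valid only once you know that \emph{exactly} $r+1$ right cells meet the Harish-Chandra locus $\{w_c v: v\in{}^\fk W\}$; but given the forward direction and the surjectivity onto $\{0,\dots,r\}$, establishing that count is \emph{equivalent} to the statement ``$k(w)=k(x)$ implies $w\stackrel{R}{\sim}x$'' that you are trying to prove. So the counting step does not reduce the problem --- it restates it --- and the actual content (that the recording tableau of $w_c v$ in type $\mathsf{A}$, resp.\ the Garfinkle domino data in types $\mathsf{B}$, $\mathsf{C}$, $\mathsf{D}$, depends only on the orbit index $k$ computed from \cite[Thm.~6.2, Thm.~7.1]{BXX}, plus the finite checks for $\mathfrak{e}_{6(-14)}$ and $\mathfrak{e}_{7(-25)}$) is asserted rather than carried out. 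That combinatorial verification is precisely the heavy lifting done in \cite{BBLX25}; until it is supplied, your argument proves only the forward implication.
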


Recall that there are two different ways to compute the associated variety  of a highest weight Harish-Chandra module $L(\lambda)$ or $L(-w\rho-\rho)$; see \cite[Thm.~6.2 and Thm.~7.1]{BXX} and \cite[Thm.~1.2]{BHXZ}.
From the above Theorem~\ref{one-cell}, we can see that there is only one right cell $\mathcal{C}_k$ such that the corresponding highest weight Harish-Chandra modules $L(-w\rho-\rho)$ with $w\in \mathcal{C}_k $ have the same associated variety $\overline{\mathcal{O}_k}$.
From the proof of our Theorem \ref{thm:bij}, we can obtain the following result. 

\begin{prop}\label{card}
    Suppose $\Phi' \subseteq \Phi$ is a root subsystem corresponding to a connected Dynkin subdiagram $\Gamma'$ that contains the noncompact simple root. Define $k(\Gamma') := \frac{h^{\vee}-h'^{\vee}}{c}$. Then we have
    $$\AV(L(\lambda_{\Phi'}))=\begin{cases} \overline{\mathcal{O}_{k(\Gamma')}} &\textup{if~} 0\leq k(\Gamma')\leq r-1  \\
\mathfrak{p}^+ &\textup{if~} k(\Gamma')>r-1.
         \end{cases}$$  
\end{prop}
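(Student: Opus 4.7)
The plan is to reduce the statement to Proposition~\ref{C: dimYk} of Bai--Hunziker, which determines the associated variety of any unitary highest weight Harish--Chandra module purely in terms of the scalar $k(\lambda) := -(\lambda,\beta^{\vee})/c$. Since $L(\lambda_{\Phi'})$ is unitary by Proposition~\ref{unitarity}, the task reduces to computing $k(\lambda_{\Phi'})$ and identifying it with $k(\Gamma')$.

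First I would invoke Lemma~\ref{coxe} to get
$$
(\lambda_{\Phi'}+\rho,\beta^{\vee}) = h'^{\vee}-1.
$$
Subtracting the standard identity $(\rho,\beta^{\vee}) = h^{\vee}-1$ then gives
$$
(\lambda_{\Phi'},\beta^{\vee}) = h'^{\vee}-h^{\vee},
$$
so
$$
k(\lambda_{\Phi'}) \;=\; -\frac{(\lambda_{\Phi'},\beta^{\vee})}{c} \;=\; \frac{h^{\vee}-h'^{\vee}}{c} \;=\; k(\Gamma').
$$
This identifies the notational $k(\Gamma')$ of the statement with the invariant $k(\lambda)$ appearing in Proposition~\ref{C: dimYk}. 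Note that $h'^{\vee}\leq h^{\vee}$ since $\Phi'\subseteq\Phi$, so $k(\Gamma')\geq 0$ automatically.

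Now I would split into the two cases dictated by Proposition~\ref{C: dimYk}. If $k(\Gamma') > r-1$, then part (1) of that proposition gives $\gk L(\lambda_{\Phi'}) = rz_{r-1} = \tfrac{1}{2}\dim(G/K) = \dim\mathfrak{p}^{+}$, and since $\AV(L(\lambda_{\Phi'}))$ is one of the closures $\overline{\mathcal{O}_{k}}$ by Proposition~\ref{Ok}, it must be $\overline{\mathcal{O}_{r}} = \mathfrak{p}^{+}$. If $0\leq k(\Gamma')\leq r-1$, then part (2) of Proposition~\ref{C: dimYk} asserts that $k(\Gamma')$ is a nonnegative integer and $\AV(L(\lambda_{\Phi'})) = \overline{\mathcal{O}_{k(\Gamma')}}$, as desired.

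The main obstacle is essentially nil once Lemma~\ref{coxe}, Proposition~\ref{unitarity}, and Proposition~\ref{C: dimYk} are in place: the entire argument is a one-line arithmetic manipulation feeding into the Bai--Hunziker formula. The only minor point to verify is that $k(\Gamma')$ is a nonnegative integer when it lies in $[0,r-1]$, but this is built into Proposition~\ref{C: dimYk}(2) once one knows $L(\lambda_{\Phi'})$ is unitary, which is exactly Proposition~\ref{unitarity}.
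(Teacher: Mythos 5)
Your proposal is correct and follows essentially the same route as the paper: both reduce to computing $k(\lambda_{\Phi'})=-(\lambda_{\Phi'},\beta^{\vee})/c=\frac{h^{\vee}-h'^{\vee}}{c}$ and then invoke Proposition~\ref{C: dimYk}, the only cosmetic difference being that you extract $(\lambda_{\Phi'},\beta^{\vee})=h'^{\vee}-h^{\vee}$ from Lemma~\ref{coxe} while the paper reads it off directly from \eqref{eq-lambda} and \eqref{zeprime}, which is the same computation. Your additional remarks (unitarity via Proposition~\ref{unitarity} to justify applying the Bai--Hunziker formula, and Proposition~\ref{Ok} for the $k>r-1$ case) are consistent with what the paper implicitly uses.
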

\begin{proof}
    From Proposition \ref{C: dimYk}, we put $k=k(\lambda_{\Phi'})=-\frac{(\lambda_{\Phi'},\beta^{\vee})}{c}$. From equation (\ref{eq-lambda}) and (\ref{zeprime}), we have $k=k(\lambda_{\Phi'})=\frac{h^{\vee}-h'^{\vee}}{c}=k(\Gamma')$.
    Thus the result follows from Proposition \ref{C: dimYk}. 
\end{proof}

From Proposition \ref{card} and the arguments in the proof of Theorem \ref{thm:bij}, we have the following result.

\begin{cor}\label{numberNk}
Denote the number of unitary highest weight modules in a given right cell $\mathcal{C}_k$ by 
\begin{align*}
N_k
:=& \: \#\{w\in \mathcal{W} \mid L(-w\rho-\rho) \textup{~is~unitarizable~and~} w \in \mathcal{C}_k\}\\
=& \:\#\{x\in {}^\mathfrak{k}W \mid L_x \textup{~is~unitarizable~and~} x \in w_c\mathcal{C}_k\}\\
=& \: \#\{\tilde{w}\in {}^\mathfrak{k}W \mid L(\tilde{w}\rho-\rho) \textup{~is~unitarizable~and~} \tilde{w} \in \mathcal{C}_kw_0\}.\end{align*}
Then we have $$
N_k=\begin{cases} \#\left\{\Gamma'\subset \Gamma \middle|
\renewcommand{\arraystretch}{.75}
\begin{array}{l}
\Gamma'\subset \Gamma\textup{~is~a~connected~subdiagram~containing~}\\
\textup{the~noncompact~simple~root~and } h'^{\vee}=h^{\vee}-kc  
\end{array}
\right\} &\textup{if~} 0\leq k\leq r-1,  \\[20pt]
\#\left\{\Gamma'\subset \Gamma \middle|
\renewcommand{\arraystretch}{.75}
\begin{array}{l}
\Gamma'\subset \Gamma\textup{~is~a~connected~subdiagram~containing~}\\
\textup{the~noncompact~simple~root~and } h'^{\vee}<h^{\vee}-(r-1)c  
\end{array}
\right\} &\textup{if~} k=r.
         \end{cases}
%\#\left\{\Gamma'\subset \Gamma \middle|
% \begin{array}{c}
% \Gamma'\subset \Gamma\text{~is~a~connected~subdiagram~containing~}\\
% \text{the~noncompact~simple~root~and } h'^{\vee}=h^{\vee}-kc  
% \end{array}
% \right\},
$$
Here $h'^{\vee}$ \textup{(}resp., $h^{\vee}$\textup{)} is the dual Coxeter number of $\Gamma'$ \textup{(}resp., $\Gamma$\textup{)}, and the value of the parameter~$c$ is given in Table~\ref{constants-k}.
% Let $L(w) $ be a unitary highest weight module with $V(L(w))=\overline{\mathcal{O}}_k$ for some $0\leq k\leq r$. Then 
\end{cor}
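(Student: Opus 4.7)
The plan is to combine three ingredients already in hand: the bijection of Theorem~\ref{thm:bij} (unitary highest weight modules with infinitesimal character $\rho$ $\leftrightarrow$ connected Dynkin subdiagrams $\Gamma' \subseteq \Gamma$ containing the noncompact simple root), Theorem~\ref{one-cell} (membership of $w$ in the right cell $\mathcal{C}_k$ is equivalent to $\AV(L(-w\rho-\rho)) = \overline{\mathcal{O}_k}$), and Proposition~\ref{card} (the explicit formula $\AV(L(\lambda_{\Phi'})) = \overline{\mathcal{O}_{k(\Gamma')}}$ with $k(\Gamma') = (h^\vee - h'^\vee)/c$, reading as $\overline{\mathcal{O}_r}=\mathfrak{p}^+$ when $k(\Gamma') > r-1$). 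The three displayed versions of $N_k$ in the statement are just rewrites of one another under the bijection $w \leftrightarrow x = w_cw \leftrightarrow \tilde{x} = w_cxw_0$, which is already in the paper (see the discussion around Proposition~\ref{w-exp}); so it suffices to count the subdiagrams directly.

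First I would take $w \in W$ such that $L(-w\rho-\rho)$ is a unitary highest weight Harish-Chandra module and apply Theorem~\ref{thm:bij} to obtain a unique connected subdiagram $\Gamma' \subseteq \Gamma$ containing the noncompact simple root with $L(-w\rho-\rho) = L(\lambda_{\Phi'})$, where $\Phi'$ is the root subsystem generated by $\Gamma'$. By Theorem~\ref{one-cell}, the condition $w \in \mathcal{C}_k$ is equivalent to $\AV(L(-w\rho-\rho)) = \overline{\mathcal{O}_k}$. Applying Proposition~\ref{card}, this is in turn equivalent to $k(\Gamma') = k$ when $0 \leq k \leq r-1$, i.e.\ $h'^\vee = h^\vee - kc$.

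Finally, the case $k = r$ requires a small separate argument: since the chain $\overline{\mathcal{O}_0} \subset \cdots \subset \overline{\mathcal{O}_r} = \mathfrak{p}^+$ is strictly increasing, the condition $\AV(L(\lambda_{\Phi'})) = \overline{\mathcal{O}_r} = \mathfrak{p}^+$ holds precisely when Proposition~\ref{card} puts us in the second clause, namely $k(\Gamma') > r-1$, which is $h'^\vee < h^\vee - (r-1)c$. Collecting both cases yields the stated formula. There is essentially no obstacle here — everything is a bookkeeping exercise once Proposition~\ref{card} is available — but one should briefly note, for clarity, that integrality of $k(\Gamma') = (h^\vee - h'^\vee)/c$ in the range $0 \le k \le r-1$ is guaranteed by Proposition~\ref{C: dimYk}(2), so the equation $h'^\vee = h^\vee - kc$ is automatically consistent with the arithmetic of Table~\ref{constants-k}.
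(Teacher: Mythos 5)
Your proposal is correct and follows essentially the same route as the paper, which derives the corollary directly from Proposition~\ref{card} (giving $\AV(L(\lambda_{\Phi'}))=\overline{\mathcal{O}_{k(\Gamma')}}$ with $k(\Gamma')=(h^{\vee}-h'^{\vee})/c$), Theorem~\ref{one-cell} (identifying membership in $\mathcal{C}_k$ with the associated variety $\overline{\mathcal{O}_k}$), and the bijection of Theorem~\ref{thm:bij}. Your added remarks on the $k=r$ case and on the integrality of $k(\Gamma')$ are consistent with, and slightly more explicit than, the paper's one-line justification.
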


    Recall that $L(\lambda_{\Phi'})=L_x=L(-w_cx\rho-\rho)=L(\tilde{x}\rho-\rho)$ for some $x\in {}^\mathfrak{k}W$. From Proposition \ref{decomp} and Proposition \ref{P:product}, we know that $w_cx$ will be a reduced expression if the reduced expression of $x$ comes from a generalized Young diagram. From Lemma \ref{L:involution}, the reduced expression of $\tilde{x}=w_cxw_0$ can be read off from the generalized Young diagram $\Phi_{\tilde{x}}=w_{c}^{-1} (\Phi(\mathfrak{p}^+)\setminus \Phi_x)=\Phi(\mathfrak{p}^+)\setminus w_c(\Phi_x)$. See the arguments in the proof of Proposition \ref{generalizeddiagram}.

\begin{cor}
The highest weight module $L(\lambda)$ with trivial infinitesimal character is unitarizable if and only if $\lambda=\lambda_{\Phi'}$ for some root
subsystem $\Phi'$ corresponding to a connected Dynkin subdiagram $\Gamma'$ of $\Gamma$ that contains the noncompact simple root.
    The highest weight module $L(-w\rho-\rho)$ is unitarizable if and only if $w=w_cx$ for some $x\in {}^\mathfrak{k}W$ corresponding to some connected subdiagram of $\Gamma$ that contains 
the noncompact simple root. The explicit expressions for $\lambda_{\Phi'}$, $w_c$, and $x$ are given in the following subsections.
\end{cor}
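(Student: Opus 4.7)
The plan is to deduce both assertions directly from Theorem \ref{thm:bij} (already established in Sections \ref{uniform-proof} and \ref{case-proof}) together with the standard criterion identifying which elements $w\in W$ give rise to highest weight Harish-Chandra modules of the form $L(-w\rho-\rho)$. Essentially this corollary is a repackaging of Theorem \ref{thm:bij} in two slightly different parameterizations, so no new representation-theoretic input is needed.

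For the first assertion I would begin by observing that any $L(\lambda)$ with trivial infinitesimal character has $\lambda+\rho$ in the $W$-orbit of $\rho$, so $\lambda=u\rho-\rho$ for some $u\in W$; restricting to highest weight Harish-Chandra modules forces $u=\tilde{x}=w(\Gamma')$ for some $x\in {}^{\mathfrak{k}}W$, by the characterization from \cite{Zie18} and \cite[Prop.~2.3]{BN05} recalled in Section \ref{pre}. The definition
\[
\lambda_{\Phi'}:=-w_c x\rho-\rho=\tilde{x}\rho-\rho
\]
from Section \ref{uniform-proof} matches exactly. Unitarity is then equivalent, by Theorem \ref{thm:bij}, to $\Gamma'$ being a connected Dynkin subdiagram of $\Gamma$ containing the noncompact simple root (with the convention that the empty subdiagram corresponds to the trivial module).

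For the second assertion I would apply the same \cite{BN05, Zie18} criterion to write $w=w_c x$ for some $x\in {}^{\mathfrak{k}}W$, after which
\[
L(-w\rho-\rho)=L(-w_c x\rho-\rho)=L(\lambda_{\Phi'})
\]
for the root subsystem $\Phi'\subseteq \Phi$ uniquely determined by $\Phi_x=\Phi'\cap \Phi(\mathfrak{p}^+)$ via Lemma \ref{L:ideal}. Theorem \ref{thm:bij} then yields unitarity precisely when the corresponding subdiagram $\Gamma'$ is connected and contains the noncompact simple root. The third clause is a pure forward reference: the explicit formulas for $\lambda_{\Phi'}$ (via equation \eqref{eq-lambda}), for $w_c$ (standard), and for the canonical reduced expression of $x$ (read off from the generalized Young diagram of $\Phi_x$ via Proposition \ref{P:product}) will be tabulated in the subsequent subsections, so nothing further requires proof.

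There is no real mathematical obstacle, only a bookkeeping one: namely, verifying that the conventions for $\tilde{x}=w_c x w_0$, $w(\Gamma')$, and $\lambda_{\Phi'}$ introduced in Lemma \ref{L:involution}, Proposition \ref{unitarity}, and equation \eqref{eq-lambda} remain mutually consistent under the passage from the parameter $x\in {}^{\mathfrak{k}}W$ to $w=w_c x\in w_c\cdot {}^{\mathfrak{k}}W$, which is handled by the identities $\tilde{x}=w_c x w_0$ and $w_0\rho=-\rho$.
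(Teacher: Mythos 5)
Your argument is correct and coincides with the paper's (implicit) treatment: the corollary is stated without separate proof as a direct repackaging of Theorem~\ref{thm:bij}, using exactly the criterion you invoke that $L(-w\rho-\rho)$ is a highest weight Harish-Chandra module if and only if $w=w_cx$ with $x\in{}^{\mathfrak{k}}W$, together with the identities $\tilde{x}=w_cxw_0$ and $w_0\rho=-\rho$. One small slip in your parenthetical aside: the empty subdiagram corresponds to $x=e$ and $L(-w_c\rho-\rho)=N(-2\rho_n)$, not to the trivial module, which instead arises from $\Gamma'=\Gamma$ (i.e.\ $x=w^c$, $w=w_0$, $\lambda=0$).
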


Note that $\Gamma'=\Gamma$ if and only if $h'=h$, if and only if $k=0$. In this case, the corresponding element $x=w^{c}$ is the longest element in ${}^\mathfrak{k}W$. Thus we have $w_cx=w_cw^c=w_0\in \mathcal{W}$ and $w(\Gamma')=w_cw^cw_0=w_0^2=\mathrm{Id}$. In this case, we have $L(\lambda)=L(0)$.

Similarly, $\Gamma'=\varnothing$ if and only if   $h'=0$, which implies that $k=r$. In this case, we have the corresponding element $x=\rm{Id}$. Thus we have $w_cx=w_c\in \mathcal{W}$ and $w(\Gamma')=w_cxw_0=w_cw_0$. In this case, we have  
$L(\lambda)=L(-w_c\rho-\rho)=N(-w_c\rho-\rho)$ by \cite[Thm.~9.12]{hum08}, since $w_c(\Phi(\mathfrak{p}^+))\subset \Phi(\mathfrak{p}^+)$. Note that $-w_c\rho-\rho=-w_c(\rho_c+\rho_n)-\rho=\rho_c-\rho_n-(\rho_c+\rho_n)=-2\rho_n=-2(\rho-\rho_c)$, where $\rho_c$ denotes half the sum of roots in $\Phi^+(\mathfrak{k})$ and $\rho_n$ denotes half the sum of roots in $\Phi(\mathfrak{p}^+)$.

\begin{table}[H] 
\centering
\begingroup
\renewcommand{\arraystretch}{1.5}
\setlength\tabcolsep{10pt}
%\begin{tabular}{lll}
\begin{tabular}{l|l|l}
\hline
$\fg_\R$ & $k$ & $N_k$\\  
\hline    
$\mathfrak{su}(p,q)$ &
$[1,\min\{p,q\}-1]$& $k+1$\\ 
&$\min\{p,q\}$& $\frac{p(2q-p-1)}{2}+1$ \\ \hline
   $\mathfrak{sp}(n,\R)$ & odd and $[1,n-1]$ & $0$\\
   & even and $[1,n-1]$ & $1$\\
   & $n$ & $[\frac{n}{2}]+1$\\ \hline
   $\mathfrak{so}^*(2n)$  &$[1,[\frac{n}{2}]-2]$ & $1$\\
   & $[\frac{n}{2}]-1$ and $ n$ odd  & $1$\\
   & $[\frac{n}{2}]-1$ and $ n$ even & $2$\\
   & $[\frac{n}{2}]$ & $3n-3[\frac{n}{2}]-3$\\ \hline
%\midrule  
$\mathfrak{so}(2,2n-1)$ &  $1$ & $0$\\
&$2$& $n$\\ \hline
$\mathfrak{so}(2,2n-2)$ &   $1$ & $2$\\
&$2$& $n-1$\\ \hline
%\midrule  
 $\mathfrak{e}_{6(-14)}$ &   $1$ & $0$\\
&$2$& $8$\\   \hline
$\mathfrak{e}_{7(-25)}$ &   $1$ & $0$\\
&  $2$  & $1$\\
&  $3$  & $8$\\ \hline
\end{tabular}
\endgroup
\caption{The number of  unitary highest weight modules in a  right cell. (For all types, $N_0=1$.)}
\label{Nk}
\bigskip
\end{table}

In what follows, we will give the results in Table \ref{Nk}. We only consider  the cases $1\leq k\leq r$ and $\Gamma'\neq \Gamma$, $\varnothing$.
Define $S(\Gamma') := \{i\mid \alpha_i\in \Gamma'\}$. We write $w_{X_k}$ to denote the longest element in the Weyl group $W(X_k)
$.

\subsection{Case $\g_\R=\su(p,q)$}From \cite[Table 1]{BKOP} and \cite{EHW}, we have $$w_c=w_{A_{p-1}}\times w_{A_{q-1}}=s_{p-1}(s_{p-2}s_{p-1})\cdots (s_1s_2\cdots s_{p-1})s_n(s_{n-1}s_n)\cdots (s_{p+1}s_{p+2}\cdots s_n),$$
where $n=p+q$.
For  $1\leq k\leq r=\min\{p,q\}$,  we  have $h^{\vee}-kc=p+q-k$ by Table~\ref{constants-k}. Note that $S(\Gamma')=\{i\mid p-p'+1\leq i\leq p+q'-1\}$ and $\Gamma'\simeq \mathfrak{su}(p',q')$ with $1\leq p'\leq p$ and $1\leq q'\leq q$. Then $h'^{\vee}=p'+q'$. 

Suppose $p\leq q$. Then $r=p$. We write $k=r-k'=p-k'$. 

When $k\leq r-1$, equivalently $k'\geq 1$, from Corollary \ref{numberNk} we will have $p'+q'=p+q-k=p+q-(r-k')=q+k'$ since $h'^{\vee}=h^{\vee}-kc$. Thus
$(p',q')=(k',q),(k'+1,q-1),\dots,(p-1,q-p+k'+1)\text{~or~} (p,q-p+k')$. Thus $N_k=p-(k'-1)=p-(p-k-1)=k+1.$

When $k=r$, equivalently $k'=0$, from Corollary \ref{numberNk} we will have  $p'+q'<p+q-r+1=p+q-r+1=q+1$ since $h'^{\vee}<h^{\vee}-(r-1)c$. Denote $$d_r=\#\{(p',q')\in \mathbb{Z}_{>0}\times \mathbb{Z}_{>0}\mid 1\leq p'\leq p, 1\leq q'\leq q, 2\leq p'+q'\leq q\}=\frac{p(2q-p-1)}{2}.$$ 

Therefore, we have 
\begin{align*}N_k&=\begin{cases} k+1, &\text{if~} 1\leq k\leq r-1  \\
d_r+1, &\text{if~} k=r.
         \end{cases}
\end{align*}
From \cite[\S 3.7]{EHP}, the  generalized Young diagram of the corresponding $x\in {}^\mathfrak{k}W$ for $\Gamma'\simeq \mathfrak{su}(p',q')$ is 
\begin{equation*}
    \small{ \begin{tikzpicture}[scale=1.2,baseline=-69pt]
		\hobox{0}{0}{p}
		\hobox{1}{0}{p-1}
            \hobox{2}{0}{\cdots}
		\hobox{3}{0}{p-p'+1}
        \hobox{0}{1}{p+1}
         \hobox{1}{1}{p}
          \hobox{2}{1}{\cdots}
           \hobox{3}{1}{p-p'+2}
        \hobox{0}{2}{\vdots}
         \hobox{1}{2}{\vdots}
          \hobox{2}{2}{\cdots}
           \hobox{3}{2}{\vdots}
        \hobox{0}{3}{p+q'-1}
         \hobox{1}{3}{p+q'-2}
          \hobox{2}{3}{\cdots}
           \hobox{3}{3}{p-p'+q'}
  		\end{tikzpicture}}.
\end{equation*}
Thus we have \[x=s_ps_{p-1}\cdots s_{p-p'+1}s_{p+1}\cdots s_{p-p'+2} s_{p+q'-1}\cdots s_{p-p'+q'}\] and 
\begin{align*}
\lambda_{\Phi'} 
&= -h^\vee\zeta + h'^\vee w_c\zeta' \\
&= (\underbrace{-q,\ldots,-q}_{p};\ \underbrace{p,\ldots,p}_{q}) 
+ w_c\big(\underbrace{0,\ldots,0}_{p-p'},\underbrace{q',\ldots,q'}_{p'};\ \underbrace{-p',\ldots,-p'}_{q'},\underbrace{0,\ldots,0}_{q-q'}\big) \\
&= (\underbrace{-q,\ldots,-q}_{p};\ \underbrace{p,\ldots,p}_{q}) 
+ \big(\underbrace{q',\ldots,q'}_{p'},\underbrace{0,\ldots,0}_{p-p'};\ \underbrace{0,\ldots,0}_{q-q'},\underbrace{-p',\ldots,-p'}_{q'}\big) \\
&= (\underbrace{-(q-q'),\ldots,-(q-q')}_{p'},\underbrace{-q,\ldots,-q}_{p-p'};\ \underbrace{p,\ldots,p}_{q-q'},\underbrace{p-p',\ldots,p-p'}_{q'}) \\
&= q'\omega_{p'} - (p+q)\omega_p + p'\omega_{p+q-q'}.
\end{align*}

  When $\Gamma'=\varnothing$, we have \begin{align*}
      \lambda_{\Phi'}=&-w_c\rho-\rho=-2(\rho-\rho_c)\\
      =&-((n-1,n-3,\dots,-n+3,-n+1)\\
      &-(p-1,p-3,\dots,-p+3,-p+1,q-1,q-3,\dots,-q+3,-q+1))\\
      =&-(\underbrace{q,\ldots,q}_{p},\underbrace{-p,\ldots,-p}_{q})\\
      =&(\underbrace{-q,\ldots,-q}_{p},\underbrace{p,\ldots,p}_{q}).
  \end{align*}
   
 \subsection{Case $\g_\R=\so(2,2n-1)$}From \cite[Table 1]{BKOP} and \cite{EHW}, we have $$w_c=w_{B_{n-1}}=s_n(s_{n-1}s_ns_{n-1})\cdots (s_2\cdots s_{n-1}s_n s_{n-1}\cdots s_2).$$
For  $1\leq k \leq r=2$, we  have $h^{\vee}-kc=2n-1-k(n-\frac{3}{2})$ by Table \ref{constants-k}.   Note that  $S(\Gamma')=\{i\mid 1\leq i\leq p\}$ and $\Gamma'\simeq \mathfrak{su}(1,p)$ with $1\leq p\leq n-1$. Then $h'^{\vee}=p+1$. 

When $k=1$,  from Corollary \ref{numberNk} we will have $p+1=2n-1-(n-\frac{3}{2})$ since $h'^{\vee}=h^{\vee}-kc$, which implies that $p=n-\frac{1}{2}$. This is a contradiction since $p$ is a positive integer.
Thus $N_1=0$.

When $k=r=2$, from Corollary \ref{numberNk} we will have $p+1<2n-1-(n-\frac{3}{2})$  since $h'^{\vee}<h^{\vee}-(r-1)c$, which simplifies to $p<n-\frac{1}{2}$, equivalently, $1\leq p\leq n-1$.  Thus $N_1=n-1+1=n$.

Therefore, we have 
\begin{align*}N_k&=\begin{cases} 0, &\text{if~} k=1  \\
n, &\text{if~} k=2.
         \end{cases}
\end{align*}
    
From \cite[\S 3.7]{EHP}, the  generalized Young diagram of the corresponding $x\in {}^\mathfrak{k}W$ for $\Gamma'\simeq \mathfrak{su}(1,p)$ with $1\leq p\leq n-1$ is 
\begin{equation*}
    { \begin{tikzpicture}[scale=0.7,baseline=-10pt]
		\hobox{0}{0}{1}
		\hobox{1}{0}{2}
            \hobox{2}{0}{\cdots}
		\hobox{3}{0}{p}
  		\end{tikzpicture}}.
\end{equation*}
Thus we have $$x=s_{1}s_{2}\cdots s_{p}$$ and 
$$\lambda_{\Phi'}=-(2n-p) \omega_1+\omega_{p+1}.$$
When $\Gamma'=\varnothing$, we have \begin{align*}
      \lambda_{\Phi'}=&-w_c\rho-\rho=-2(\rho-\rho_c)\\
      =&-((2n-1,2n-3,\dots,3,1)-(0,2n-3,2n-5,\dots,3,1))\\
      =&-(2n-1,0,\dots,0).
  \end{align*}

\subsection{Case $\g_\R=\sp(n, \RR)$}From \cite[Table 1]{BKOP} and \cite{EHW}, we have $$w_c=w_{A_{n-1}}=s_{n-1}(s_{n-2}s_{n-1})\cdots (s_1s_2\cdots s_{n-1}).$$
For  $1\leq k \leq r=n$,    we  have $h^{\vee}-kc=n+1-\frac{k}{2}$ by Table \ref{constants-k}.     Note that  $S(\Gamma')=\{i\mid n-p+1\leq i\leq n\}$ and $\Gamma'\simeq \mathfrak{sp}(p, \RR)$ with $1\leq p\leq n-1$. Then $h'^{\vee}=p+1$.

  When $k\leq r-1=n-1$,  from Corollary \ref{numberNk} we will have $p+1=n+1-\frac{k}{2}$ since $h'^{\vee}=h^{\vee}-kc$, which simplifies to $p=n-\frac{k}{2}$. Thus $N_k=1$ when $k$ is even and $N_k=0$ when $k$ is odd.

When $k=r$,  from Corollary \ref{numberNk} we will have $p+1<n+1-(n-1)\frac{1}{2}$ since $h'^{\vee}<h^{\vee}-(r-1)c$, which simplifies to $p<\frac{n+1}{2}$, equivalently $p\leq [\frac{n}{2}]$. 

Therefore, we have 
\begin{align*}N_k&=\begin{cases} 0 &\text{if~$k$~is~odd~and~} 1\leq k\leq n-1,  \\
1 &\text{if~$k$~is~even~and~} 1\leq k\leq n-1,  \\
[\frac{n}{2}]+1 &\text{if~} k=n.
         \end{cases}
\end{align*}
    
From \cite[\S 3.7]{EHP}, the  generalized Young diagram of the corresponding $x\in {}^\mathfrak{k}W$ for $\Gamma'\simeq\mathfrak{sp}(p, \RR)$ with $1\leq p\leq n$ is 
\begin{equation*}
    \tiny{ \begin{tikzpicture}[scale=0.8,baseline=-46pt]
		\hobox{0}{0}{n}
		\hobox{1}{0}{n-1}
            \hobox{2}{0}{\cdots}
		\hobox{3}{0}{n-p+1}
         \hobox{1}{1}{n}
          \hobox{2}{1}{\cdots}
           \hobox{3}{1}{n-p+2}
          \hobox{2}{2}{\ddots}
           \hobox{3}{2}{\vdots}
    \hobox{3}{3}{n}
  		\end{tikzpicture}}.
\end{equation*}
Thus we have $$x=s_ns_{n-1}\cdots s_{n-p+1}s_{n}\cdots s_{n-p+2}\cdots s_{n}$$ and \begin{align*}
\lambda_{\Phi'} 
&= -h^\vee\zeta + h'^\vee w_c\zeta' \\
&= (\underbrace{-(n+1),\ldots, -(n+1)}_{n}) 
+ w_c\big(\underbrace{0,\ldots,0}_{n - p},\ \underbrace{p+1,\ldots,p+1}_{p}\big) \\
&= (\underbrace{-(n+1),\ldots, -(n+1)}_{n}) 
+ \big(\underbrace{p+1,\ldots,p+1}_{p},\ \underbrace{0,\ldots,0}_{n - p}\big) \\
&= (\underbrace{-(n - p),\ldots, -(n - p)}_{p},\ \underbrace{-(n+1),\ldots, -(n+1)}_{n - p}) \\
&= (p+1)\omega_{p} - (n+1)\omega_n.
\end{align*}  
    
   When $\Gamma'=\varnothing$, we have \begin{align*}
      \lambda_{\Phi'}=&-w_c\rho-\rho=-2(\rho-\rho_c)\\
      =&-((2n,2n-2,\dots,4,2)-(n-1,n-3,\dots,-n+3,-n+1))\\
      =&-(n+1,\dots,n+1).
  \end{align*}   
    
  \subsection{Case \texorpdfstring{$\g_\R=\so^*(2n)$}{gR=so*(2n)}}
  From \cite[Table 1]{BKOP} and \cite{EHW}, we have $$w_c=w_{A_{n-1}}=s_{n-1}(s_{n-2}s_{n-1})\cdots (s_1s_2\cdots s_{n-1}).$$
For  $1\leq k \leq r=[\frac{n}{2}]$ with $n\geq 4$,    we  have $h^{\vee}-kc=2n-2-2k$ by Table \ref{constants-k}.   Note that  $S(\Gamma')=\{i\mid n-p\leq i\leq n-2 \text{~or~} i=n\}$ and $\Gamma'\simeq \mathfrak{su}(1, p)$ with $1\leq p\leq n-1$, or $S(\Gamma')=\{i\mid n-q+1\leq i\leq n\}$ and $\Gamma'\simeq\so^*(2q)$ with $3\leq q\leq n-1$. Then $h'^{\vee}=p+1$ or $2q-2$.

  When $k\leq r-1$ and $\Gamma'\simeq \mathfrak{su}(1,p)$ with $1\leq p\leq n-1$,  from Corollary \ref{numberNk} we will have $p+1=2n-2-2k$ since $h'^{\vee}=h^{\vee}-kc$, which simplifies to $p=2n-3-2k\leq n-1$. Thus we get \begin{equation}\label{unique}
  \frac{n}{2}-1\leq k\leq \left[\frac{n}{2}\right]-1.\end{equation}
  When $n$ is even, there is a unique $k$ satisfying equation (\ref{unique}), i.e., $k=r-1=\frac{n}{2}-1$. In this case, we have $p=n-1$.  When $n$ is odd, we get a contradiction. 
  When $\Gamma'\simeq \so^*(2q)$ with $3\leq q\leq n$, from Corollary \ref{numberNk} we will have $2q-2=2n-2-2k$ since $h'^{\vee}=h^{\vee}-kc$, which simplifies to $q=n-k$.

When $k=r$ and $\Gamma'\simeq \mathfrak{su}(1,p)$ with $1\leq p\leq n-1$,  from Corollary \ref{numberNk} we will have $p+1<2n-2-2(r-1)$ since $h'^{\vee}<h^{\vee}-(r-1)c$, which implies that $p<2n-1-2r$. 
  When $\Gamma'\simeq \so^*(2q)$ with $3\leq q\leq n-1$, from Corollary \ref{numberNk} we will have $2q-2<2n-2-2(r-1)$ since $h'^{\vee}<h^{\vee}-(r-1)c$, which implies that $3\leq q<n-r+1$.  Thus $N_r=1+2n-2-2r+n-r-2=3n-3r-3$.

Therefore, we have 
\begin{align*}N_k&=\begin{cases} 
1, &\text{if~} 1\leq k\leq [\frac{n}{2}]-2  \\
1, &\text{if~$n$~is~odd~and~} k= [\frac{n}{2}]-1  \\
2, &\text{if~$n$~is~even~and~} k= [\frac{n}{2}]-1  \\
3n-3r-3, &\text{if~} k= [\frac{n}{2}].
         \end{cases}
\end{align*}
    
From \cite[\S 3.7]{EHP}, the  generalized Young diagram of the corresponding $x\in {}^\mathfrak{k}W$ for $\Gamma'\simeq \mathfrak{su}(1,p)$ with $2\leq p\leq n-1$ is 
\begin{equation*}
    \small{ \begin{tikzpicture}[scale=0.7,baseline=-12pt]
		\hobox{0}{0}{n}
		\hobox{1}{0}{n-2}
            \hobox{2}{0}{n-3}
             \hobox{3}{0}{\cdots}
		\hobox{4}{0}{n-p}
  		\end{tikzpicture}}.
\end{equation*}
Thus we have $$x=s_ns_{n-2}\cdots s_{n-p}.$$ Note that for $\Gamma'\simeq \mathfrak{su}(1,1)$, we have $x=s_n$. Also for $1\leq p\leq n-2$, we have \begin{align*}
\lambda_{\Phi'}&= -h^\vee\zeta + h'^\vee w_c\zeta'\\
&=(\underbrace{-(n-1),\ldots, -(n-1)}_{n})+w_c(\underbrace{0,\ldots,0}_{n-p-1},\underbrace{1,\ldots ,1}_{p},p))\\
&=(\underbrace{-(n-1),\ldots, -(n-1)}_{n})+(p,\underbrace{1,\ldots ,1}_{p},\underbrace{0,\ldots,0}_{n-p-1})\\
&=(-(n-p-1),\underbrace{-(n-2),\ldots, -(n-2)}_{p},\underbrace{-(n-1),\ldots, -(n-1)}_{n-p-1})\\
&=(p-1) \omega_1+\omega_{p+1}-(2n-2)\omega_n.
\end{align*}    
 When $p=n-1$, similarly we have $\lambda_{\Phi'}=(n-2) \omega_1-(2n-4)\omega_n.$
 
   From \cite[\S 3.7]{EHP}, the  generalized Young diagram of the corresponding $x\in {}^\mathfrak{k}W$ for $\Gamma'\simeq \mathfrak{so}^{*}(2q)$ with $3\leq q\leq n-1$ is 
\begin{equation*}
    \tiny{ \begin{tikzpicture}[scale=0.8,baseline=-58pt]
		\hobox{0}{0}{n}
		\hobox{1}{0}{n-2}
            \hobox{2}{0}{n-3}
             \hobox{3}{0}{\cdots}
		\hobox{4}{0}{n-q+1}
         \hobox{1}{1}{n-1}
          \hobox{2}{1}{n-2}
           \hobox{3}{1}{\cdots}
            \hobox{4}{1}{n-q+2}
          \hobox{2}{2}{n}
           \hobox{3}{2}{\cdots}
           \hobox{4}{2}{n-q+3}
    \hobox{3}{3}{\ddots}
     \hobox{4}{3}{\vdots}
      \hobox{4}{4}{n'}
  		\end{tikzpicture}},
\end{equation*}
where $n'=n$ when $q$ is even and $n'=n-1$ when $q$ is odd.

Thus we have $$x=s_ns_{n-2}\cdots s_{n-q+1}s_{n-1}\cdots s_{n-q+2}\cdots s_{n'}$$ and \begin{align*}
\lambda_{\Phi'}&= -h^\vee\zeta + h'^\vee w_c\zeta'\\
&=(\underbrace{-(n-1),\ldots, -(n-1)}_{n})+w_c(\underbrace{0,\ldots,0}_{n-q},\underbrace{q-1,\ldots ,q-1}_{q})\\
&=(\underbrace{-(n-1),\ldots, -(n-1)}_{n})+(\underbrace{q-1,\ldots ,q-1}_{q},\underbrace{0,\ldots,0}_{n-q})\\
&=(\underbrace{-(n-q),\ldots, -(n-q)}_{q},\underbrace{-(n-1),\ldots, -(n-1)}_{n-q})\\
&=(q-1) \omega_{q}-(2n-2)\omega_n.
\end{align*}     
     
  When $\Gamma'=\varnothing$, we have \begin{align*}
      \lambda_{\Phi'}=&-w_c\rho-\rho=-2(\rho-\rho_c)\\
      =&-((2n-2,2n-4,\dots,2,0)-(n-1,n-3,\dots,-n+3,-n+1))\\
      =&-(n-1,\dots,n-1).
  \end{align*}   
    
  \subsection{Case $\g_\R=\so(2,2n-2)$}
  From \cite[Table 1]{BKOP} and \cite{EHW}, we have \begin{align*}w_c=w_{D_{n-1}}=&
      s_ns_{n-1}(s_{n-2}s_ns_{n-1}s_{n-2})\cdots(s_k\cdots s_{n-2}s_ns_{n-1}\cdots s_k)\cdots(s_2\cdots s_{n-2}s_n s_{n-1}\cdots s_{2}).
  \end{align*}
From Bourbaki \cite[PLATE IV]{Bour}, we know that $w_c(x_1,\dots,x_n)=(x_1,-x_2,\dots,-x_n)$ if $n$ is odd and $w_c(x_1,\dots,x_n)=(x_1,-x_2,\dots,-x_{n-1},x_n)$ if $n$ is even for any vector $(x_1,\dots,x_n)\in \mathbb{R}^n$.

For  $1\leq k \leq r=2$ with $n\geq 4$,    we  have $h^{\vee}-kc=2n-2-k(n-2)$ by Table \ref{constants-k}.  Note that $\Gamma'\simeq  \mathfrak{su}(1, p)$ with $1\leq p\leq n-1$. When $1\leq p\leq n-2$, we have $S(\Gamma')=\{i\mid 1\leq i\leq p\}$ and $\Gamma'\simeq  \mathfrak{su}(1, p)$. When  $\Gamma'\simeq  \mathfrak{su}(1, n-1)$,  there are two different subdiagrams which are isomorphic to it. We denote them by $\mathfrak{su}(1, n-1)^I$ (with $S(\Gamma')=\{i\mid 1\leq i\leq n-1\}$) and $\mathfrak{su}(1, n-1)^{II}$ (with $S(\Gamma')=\{i\mid 1\leq i\leq n-2 \text{~or~} i=n\}$). Then $h'^{\vee}=p+1$.

  When $k=1$,  from Corollary \ref{numberNk} we will have $p+1=2n-2-(n-2)$ since $h'^{\vee}=h^{\vee}-kc$, which simplifies to $p=n-1$.

When $k=r=2$,  from Corollary \ref{numberNk} we will have $p+1<2n-2-c$ since $h'^{\vee}<h^{\vee}-(r-1)c$, which simplifies to $p<n-1$.

Therefore, we have 
\begin{align*}N_k&=\begin{cases} 
2, &\text{if~} k=1  \\
n-1, &\text{if~} k=2.
         \end{cases}
\end{align*}
    
From \cite[\S 3.7]{EHP}, the  generalized Young diagram of the corresponding $x\in {}^\mathfrak{k}W$ for $\Gamma'\simeq\mathfrak{su}(1,p)$ with $1\leq p\leq n-2$ is 
\begin{equation*}
    \text{{ \begin{tikzpicture}[scale=0.7,baseline=-12pt]
		\hobox{0}{0}{1}
		\hobox{1}{0}{2}
            \hobox{2}{0}{\cdots}
		\hobox{3}{0}{p}
  		\end{tikzpicture}}}.
\end{equation*}
Thus we have $$x=s_1s_{2}\cdots s_{p}.$$  
For $1\leq p\leq n-3$, similarly we have $$\lambda_{\Phi'}=-(2n-p-1)\omega_1+\omega_{p+1}.$$ For $p=n-2$, similarly we have $$\lambda_{\Phi'}=-(n+1)\omega_1+\omega_{n-1}+\omega_n.$$

  The  generalized Young diagram of the corresponding $x\in {}^\mathfrak{k}W$ for $\Gamma'\simeq\mathfrak{su}(1,n-1)^I$  is 
  \begin{equation*}
    \text{{ \begin{tikzpicture}[scale=0.7,baseline=-12pt]
		\hobox{0}{0}{1}
		\hobox{1}{0}{2}
            \hobox{2}{0}{\cdots}
		\hobox{3}{0}{n-1}
  		\end{tikzpicture}}}.
\end{equation*}
Thus we have $$x=s_1s_{2}\cdots s_{n-1}.$$ 
When $n$ is even, we have
\begin{align*}
\lambda_{\Phi'}&= -h^\vee\zeta + h'^\vee w_c\zeta'\\
&=(-2n+2,\underbrace{0,\ldots, 0)}_{n-1})+w_c(n-1,\underbrace{-1,\ldots,-1}_{n-1})\\
&=(-2n+2,\underbrace{0,\ldots, 0)}_{n-1})+(n-1,\underbrace{1,\ldots,1}_{n-2},-1)\\
&=(-n+1,\underbrace{1,\ldots,1}_{n-2},-1)\\
&=-n \omega_{1}+2\omega_{n-1}.
\end{align*}     

When $n$ is odd, the computation is similar. Thus we have 
 $$\lambda_{\Phi'}=\begin{cases} 
-n\omega_1+2\omega_{n-1}, &\text{if~} n \text{~is~even} \\
-n\omega_1+2\omega_{n}, &\text{if~} n \text{~is~odd}.
         \end{cases}
$$

     The  generalized Young diagram of the corresponding $x\in {}^\mathfrak{k}W$ for $\Gamma'\simeq\mathfrak{su}(1,n-1)^{II}$  is 
  \begin{equation*}
    \text{{ \begin{tikzpicture}[scale=0.7,baseline=-20pt]
		\hobox{0}{0}{1}
		\hobox{1}{0}{2}
            \hobox{2}{0}{\cdots}
		\hobox{3}{0}{n-2}
        \hobox{3}{1}{n}
  		\end{tikzpicture}}}.
\end{equation*}
  Thus we have $$x=s_1s_{2}\cdots s_{n-2}s_{n}$$ and similarly we can get $$\lambda_{\Phi'}=\begin{cases} 
-n\omega_1+2\omega_{n}, &\text{if~} n \text{~is~even} \\
-n\omega_1+2\omega_{n-1}, &\text{if~} n \text{~is~odd}.
         \end{cases}
$$
    
      When $\Gamma'=\varnothing$, we have \begin{align*}
      \lambda_{\Phi'}=&-w_c\rho-\rho=-2(\rho-\rho_c)\\
      =&-((2n-2,2n-4,\dots,2,0)-(0,2n-4,\dots,2,0))\\
      =&-(2n-2,0,\cdots,0).
  \end{align*}

  \subsection{Case $\g_\R=\mathfrak{e}_{6(-14)}  $}From \cite[Table 1]{BKOP} and \cite{EHW}, we have $$w_c=w_{D_{5}}=
      s_2s_{3}(s_{4}s_2s_{3}s_{4})(s_5 s_{4}s_2s_{3}s_{3} s_5)(s_6s_5 s_{4}s_2 s_{3}s_4s_5 s_{6}).
$$
From Bourbaki \cite[PLATE IV]{Bour}, we know that $w_c(x_1,\dots,x_8)=(x_1,-x_2,-x_3,-x_4,-x_5,x_6,x_7,x_8)$ for any vector $(x_1,\dots,x_8)\in \mathbb{R}^8$.

For  $1\leq k \leq r=2$,    we  have $h^{\vee}-kc=12-3k$ by Table \ref{constants-k}.   Note that  $\Gamma'\simeq \mathfrak{su}(1, p)$ with $1\leq p\leq 5$ or $\mathfrak{so}(2,8)$. Then $h'^{\vee}=p+1$ or $8$.    

When $k=1$ and $\Gamma'\simeq \mathfrak{su}(1, p)$ with $1\leq p\leq 5$,  from Corollary \ref{numberNk} we will have $p+1=12-3k$ since $h'^{\vee}=h^{\vee}-kc$, which simplifies to $p=11-3=8$. This is a contradiction since $p\leq 5$. When $\Gamma'\simeq \mathfrak{so}(2, 8)$, from Corollary \ref{numberNk} we will have $8=12-3k$ since $h'^{\vee}=h^{\vee}-kc$, which simplifies to $8=9$. This is a contradiction. Thus $N_1=0$.

Note that there are two different subdiagrams which are isomorphic to $\mathfrak{su}(1, 4)$. We denote them by $\mathfrak{su}(1, 4)^I$ (with $S(\Gamma')=\{1,3,4,5\}$) and $\mathfrak{su}(1, 4)^{II}$ (with $S(\Gamma')=\{1,3,4,2\}$).

When $k=r=2$ and $\Gamma'\simeq\mathfrak{su}(1, p)$ with $1\leq p\leq 5$,  from Corollary \ref{numberNk} we will have $p+1<12-3=9$ since $h'^{\vee}<h^{\vee}-(r-1)c$, which simplifies to $p\leq 7$.  When $\Gamma'\simeq \mathfrak{so}(2, 8)$, from Corollary \ref{numberNk} we will have $8<12-3=9$ since $h'^{\vee}<h^{\vee}-(r-1)c$, which simplifies to $8<9$. Thus $N_2=6+1+1=8$.

Therefore, we have 
\begin{align*}N_k&=\begin{cases} 
0, &\text{if~} k=1  \\
8, &\text{if~} k=2.
         \end{cases}
\end{align*}

From \cite[\S 3.7]{EHP}, the  generalized Young diagrams of the corresponding $x\in {}^\mathfrak{k}W$ for $\Gamma'$ are as follows in Table \ref{e6-diagram}:

\begin{center}
    \begin{table}[h]
        
        {
            \renewcommand{\arraystretch}{1.0} % 增加行高
            \setlength{\extrarowheight}{3pt} % 增加额外的行高
            \begin{tabular}{c|c|c}
                \hline
                $S(\Gamma')$ & Generalized Young diagram & $\lambda_{\Phi'}$ \\
                \hline
                $\varnothing$ & $\varnothing$ & $-12\omega_1$ \\
                \hline
                $\{1\}$ & \raisebox{-0.5ex}{\ytableausetup{smalltableaux}
                    \begin{ytableau}
                        1
                    \end{ytableau}} & $-12\omega_1+\omega_2$ \\
                \hline
                $\{1,3\}$ & \raisebox{-0.5ex}{\ytableausetup{smalltableaux}
                    \begin{ytableau}
                        1 & 3
                    \end{ytableau}} & $-12\omega_1+\omega_4$ \\
                \hline
                $\{1,3,4\}$ & \raisebox{-0.5ex}{\ytableausetup{smalltableaux}
                    \begin{ytableau}
                        1 & 3 & 4
                    \end{ytableau}} & $-12\omega_1+\omega_3+\omega_5$ \\
                \hline
                $\{1,3,4,5\}$ & \raisebox{-0.5ex}{\ytableausetup{smalltableaux}
                    \begin{ytableau}
                        1 & 3 & 4 & 5
                    \end{ytableau}} & $-12\omega_1+2\omega_3+\omega_6$ \\
                \hline
                $\{1,3,4,2\}$ & \raisebox{-0.5ex}{\ytableausetup{smalltableaux}
                    \begin{ytableau}
                        1 & 3 & 4 \\
                        \none & \none & 2
                    \end{ytableau}} & $-11\omega_1+2\omega_5$ \\
                \hline
                $\{1,3,4,5,2\}$ & \raisebox{-0.1ex}{\ytableausetup{smalltableaux}
                    \begin{ytableau}
                        1 & 3 & 4 & 5 \\
                        \none & \none & 2 & 4 \\
                        \none & \none & \none & 3 \\
                        \none & \none & \none & 1
                    \end{ytableau}} & $-8\omega_1+4\omega_6$ \\
                \hline
                $\{1,3,4,5,6\}$ & \raisebox{-0.5ex}{\ytableausetup{smalltableaux}
                    \begin{ytableau}
                        1 & 3 & 4 & 5 & 6
                    \end{ytableau}} & $-12\omega_1+3\omega_3$ \\
                \hline
                $\{1,2,3,4,5,6\}$ & \raisebox{-0.5ex}{\ytableausetup{smalltableaux}
                    \begin{ytableau}
                        1 & 3 & 4 & 5 & 6 \\
                        \none & \none & 2 & 4 & 5 \\
                        \none & \none & \none & 3 & 4 & 2 \\
                        \none & \none & \none & 1 & 3 & 4 & 5 & 6
                    \end{ytableau}} & $0$ \\
                \hline
            \end{tabular}
        }\caption{Generalized Young diagrams of $\mathfrak{e}_{6(-14)}$}\label{e6-diagram}
    \end{table}
\end{center}
The corresponding $x$ for each subdiagram $\Gamma'$ can be obtained analogously to the previous subsections.

 \subsection{Case $\g_\R=\mathfrak{e}_{7(-25)}  $}From \cite[Table 1]{BKOP} and \cite{EHW}, we have $$w_c=w_{E_{6}}=
      s_5s_{4}(s_{3}s_5s_{4}s_{3})(s_2 s_{3}s_5s_{4}s_{3} s_2)(s_1s_2 s_{3}s_5 s_{4}s_3s_2 s_{1})(s_6s_5s_3s_4s_2s_1s_3s_2s_5s_3s_4s_6s_5s_3s_2s_1).
$$
From Bourbaki \cite[PLATE IV]{Bour}, we know that \begin{align*}
    w_c(x_1,\dots,x_8)=&\frac{1}{2}(-x_1-x_2-x_3+2x_4,-3x_1-3x_2-x_3+2x_4,-x_1+x_2-x_3-x_4,\\
    &x_1-x_2-x_3-x_4,x_5+3x_6,x_5-x_6,x_5-x_6,-x_5+x_6)
\end{align*} for any vector $(x_1,\dots,x_8)\in \mathbb{R}^8$.

For  $1\leq k \leq r=3$,    we  have $h^{\vee}-kc=18-4k$ by Table \ref{constants-k}.   Note that  $\Gamma'\simeq\mathfrak{su}(p, 1)$ with $1\leq p\leq 6$ or $\mathfrak{so}(2,10)$. Then $h'^{\vee}=p+1$ or $10$.

When $k=1$ and $\Gamma'\simeq \mathfrak{su}(p, 1)$ with $1\leq p\leq 6$,  from Corollary \ref{numberNk} we will have $p+1=18-4k$ since $h'^{\vee}=h^{\vee}-kc$, which simplifies to $p=13$. This is a contradiction since $p\leq 6$. When $\Gamma'\simeq \mathfrak{so}(2, 10)$, from Corollary \ref{numberNk} we will have $10=18-4$ since $h'^{\vee}=h^{\vee}-kc$, which simplifies to $10=14$. This is a contradiction. Thus $N_1=0$.

Note that there are two different subdiagrams which are isomorphic to $\mathfrak{su}(5, 1)$. We denote them by $\mathfrak{su}(5, 1)^I$ (with $S(\Gamma')=\{3,4,5,6,7\}$) and $\mathfrak{su}(5, 1)^{II}$ (with $S(\Gamma')=\{2,4,5,6,7\}$).

When $k=2$ and $\Gamma'\simeq\mathfrak{su}(p, 1)$ with $1\leq p\leq 6$,  from Corollary \ref{numberNk} we will have $p+1=18-4k$ since $h'^{\vee}=h^{\vee}-kc$, which simplifies to $p=9$. This is a contradiction.  When $\Gamma'\simeq \mathfrak{so}(2, 10)$, from Corollary \ref{numberNk} we will have $10=18-4k$ since $h'^{\vee}=h^{\vee}-kc$, which simplifies to $10=10$. Thus $N_2=1$.
  
When $k=r=3$ and $\Gamma'\simeq\mathfrak{su}(1, p)$ with $1\leq p\leq 6$,  from Corollary \ref{numberNk} we will have $p+1<18-8$ since $h'^{\vee}<h^{\vee}-(r-1)c$, which simplifies to $p\leq 8$.  When $\Gamma'\simeq \mathfrak{so}(2, 10)$, from Corollary \ref{numberNk} we will have $10<18-8$ since $h'^{\vee}<h^{\vee}-(r-1)c$, which simplifies to $10<10$. This is a contradiction. Thus $N_2=7+1=8$.

Therefore, we have 
\begin{align*}N_k&=\begin{cases} 
0, &\text{if~} k=1  \\
1, &\text{if~} k=2 \\
8, &\text{if~} k=3.
         \end{cases}
\end{align*}

From \cite[\S 3.7]{EHP}, the  generalized Young diagrams of the corresponding $x\in {}^\mathfrak{k}W$ for $\Gamma'$ are as follows in Table \ref{e7-diagram}:

\begin{center}
    \begin{table}[H]
        
        {
            \renewcommand{\arraystretch}{1} % 增加行高
            \setlength{\extrarowheight}{3pt} % 增加额外的行高
            \begin{tabular}{c|c|c}
                \hline
                $S(\Gamma')$ & Generalized Young diagram & $\lambda_{\Phi'}$ \\
                \hline
                $\varnothing$ & $\varnothing$ & $-18\omega_7$ \\
                \hline
                $\{7\}$ & \raisebox{-0.5ex}{\ytableausetup{smalltableaux}
                    \begin{ytableau}
                        7
                    \end{ytableau}} & $\omega_1-18\omega_7$ \\
                \hline
                $\{7,6\}$ & \raisebox{-0.5ex}{\ytableausetup{smalltableaux}
                    \begin{ytableau}
                        7 & 6
                    \end{ytableau}} & $\omega_3-18\omega_7$ \\
                \hline
                $\{7,6,5\}$ & \raisebox{-0.5ex}{\ytableausetup{smalltableaux}
                    \begin{ytableau}
                        7 & 6 & 5
                    \end{ytableau}} & $\omega_4-18\omega_7$ \\
                \hline
                $\{7,6,5,4\}$ & \raisebox{-0.5ex}{\ytableausetup{smalltableaux}
                    \begin{ytableau}
                        7 & 6 & 5 & 4
                    \end{ytableau}} & $\omega_2+\omega_5-18\omega_7$ \\
                \hline
                $\{7,6,5,4,3\}$ & \raisebox{-0.5ex}{\ytableausetup{smalltableaux}
                    \begin{ytableau}
                        7 & 6 & 5 & 4 & 3
                    \end{ytableau}} & $2\omega_2+\omega_6-18\omega_7$ \\
                \hline
                $\{7,6,5,4,3,1\}$ & \raisebox{-0.5ex}{\ytableausetup{smalltableaux}
                    \begin{ytableau}
                        7 & 6 & 5 & 4 & 3 & 1
                    \end{ytableau}} & $3\omega_2-17\omega_7$ \\
                \hline
                $\{7,6,5,4,2\}$ & \raisebox{-0.5ex}{\ytableausetup{smalltableaux}
                    \begin{ytableau}
                        7 & 6 & 5 & 4 \\
                        \none & \none & \none & 2
                    \end{ytableau}} & $2\omega_5-18\omega_7$ \\
                \hline
                $\{7,6,5,4,3,2\}$ & \raisebox{-0.5ex}{\ytableausetup{smalltableaux}
                    \begin{ytableau}
                        7 & 6 & 5 & 4 & 3 \\
                        \none & \none & \none & 2 & 4 \\
                        \none & \none & \none & \none & 5 \\
                        \none & \none & \none & \none & 6 \\
                        \none & \none & \none & \none & 7
                    \end{ytableau}} & $5\omega_6-18\omega_7$ \\
                \hline
                $\{7,6,5,4,3,2,1\}$ & \raisebox{-0.5ex}{\ytableausetup{smalltableaux}
                    \begin{ytableau}
                        7 & 6 & 5 & 4 & 3 & 1 \\
                        \none & \none & \none & 2 & *(white) 4 & *(white) 3 \\
                        \none & \none & \none & \none & *(white) 5 & *(white) 4 & *(white) 2 \\
                        \none & \none & \none & \none & *(white) 6 & *(white) 5 & *(white) 4 & *(white) 3 & *(white) 1 \\
                        \none & \none & \none & \none & *(white) 7 & *(white) 6 & *(white) 5 & *(white) 4 & *(white) 3 \\
                        \none & \none & \none & \none & \none & \none & \none & *(white) 2 & *(white) 4 \\
                        \none & \none & \none & \none & \none & \none & \none & \none & *(white) 5 \\
                        \none & \none & \none & \none & \none & \none & \none & \none & *(white) 6 \\
                        \none & \none & \none & \none & \none & \none & \none & \none & *(white) 7
                    \end{ytableau}} & $0$ \\
                \hline
            \end{tabular}
        }\caption{Generalized Young diagrams of $\mathfrak{e}_{7(-25)}$}\label{e7-diagram}
    \end{table}
\end{center}

The corresponding $x$ for each subdiagram $\Gamma'$ can be obtained analogously to the previous subsections.

    \subsection*{Acknowledgments}
	
	Z. Bai was supported  by the National Natural Science Foundation of 
	China (No. 12171344).
 
 %\newpage
 % and the National Key $\textrm{R}\,\&\,\textrm{D}$ Program of China (No. 2018YFA0701700 and No. 2018YFA0701701).

\bibliographystyle{plain}
\bibliography{BEHJ-2025}

\end{document}